\DeclareMathOperator{\artanh}{artanh}
\newtheorem{teo}{Theorem}[section]
\newtheorem{defi}[teo]{Definition}
\newtheorem{obs}[teo]{Remark}
\newtheorem{prop}[teo]{Proposition}
\newtheorem{coro}[teo]{Corollary}
\newtheorem{lema}[teo]{Lemma}
\title{Path Connectivity of Anosov Metrics on Surfaces}
\author{Guilherme Brandão Guglielmo \and  R. Ruggiero} 
\email{guilhermebrandaoguglielmo@gmail.com \and rafael.o.ruggiero@puc-rio.br}
\date{}
\thanks{This work was supported by CAPES and by the MathAmSud Project No. 88881.878892/2023-01.} 
\begin{document}
\begin{abstract}
We construct a class of Riemannian metrics in closed surfaces of genus greater than one, having Anosov geodesic flows, and some regions of positive curvature, such that for each such surface, there exists a smooth curve of conformal deformations that preserves the Anosov property and connects the surface with a Riemannian metric of negative curvature. The conformal deformation does not arise from geometric flows like the Ricci flow, since it is known that such flows might generate conjugate points in the presence of points of positive curvature in the surface.
\end{abstract}

\maketitle

\setcounter{secnumdepth}{-1}

\section{Introduction}

The theory of Anosov geodesic flows has an important role in the study of stable dynamics, with rich interactions between geometry, topology, and ergodic theory. The study of geodesic dynamics on compact surfaces of genus $\geq 0$ started with the works of Morse \cite{morse1924fundamental}, Hopf \cite{hopf1939ergodentheorie,hopf1948closed}, Hedlund \cite{hedlund1939dynamics}, and Green \cite{green1958theorem}. The hyperbolic dynamics of the geodesic flow of compact surfaces of negative curvature, explored by Hopf and Hedlund, was later extended by Anosov \cite{anosov1969geodesic} in his seminal work in the 1960's.  Pesin \cite{pesin1977geodesic,pesin1977characteristic,brin1974partially} and Eberlein \cite{eberlein1973geodesic,eberlein1973geodesic2,eberlein1973visibility} in the 1970's developed Anosov ideas in two different branches.  Pesin introduced the main tools (so far) to study the ergodic theory of invariant measures of nonuniformly hyperbolic dynamics, that may arise in the presence of regions of nonnegative curvature in the manifold. Eberlein developed a deep theory to understand the interaction between the global behavior of geodesics, the action of the fundamental group in the universal covering of the manifold, and the topological dynamics of the geodesic flow, of certain manifolds without conjugate points that may admit regions of positive curvature (visibility manifolds). Recall that a Riemannian manifolds has no conjugate points if the exponential map at each point is nonsingular. One of the remarkable results by Eberlein in this sense is the characterization of Anosov geodesic flows in compact manifolds without conjugate points assuming the linear independence of the so-called Green bundles (see \cite{eberlein1973geodesic} for the definition). In the case of Anosov geodesic flows,  Green bundles coincide with the dynamically defined invariant bundles of the dynamics. However, Green bundles always exist in compact manifolds without conjugate points, regardless of the sign of the sectional curvatures. 

Anosov showed that negative sectional curvatures ensure the Anosov property, and it is easy to construct examples of surfaces with nonpositive curvature and non Anosov geodesic flow. It is enough to have a whole closed geodesic along which the curvature vanishes. However, negative curvature is not necessary to grant the Anosov property. Gulliver in the 1970's \cite{gulliver1975variety} constructed compact surfaces with regions of positive curvature and Anosov geodesic flows. His construction was based on 'surgeries' starting from metrics with negative curvature. Such surfaces have the additional property of having no focal points, namely, spheres in the universal covering are strictly convex.  Manifolds without focal points are special examples of manifolds without conjugate points. Donnay and Pugh \cite{donnay2003anosov} further exhibited embedded high-genus surfaces in $\mathbb{R}^3$ with Anosov geodesic flow and many regions of positive curvature. K. Burns \cite{burns1992flat} gave an example of a compact surface without conjugate points having focal points, so the inclusion of manifolds without focal points in the family of manifolds without conjugate points is strict. 

Also in the 1970's, Klingenberg \cite{klingenberg1974riemannian} proved that if the geodesic flow of a compact manifold is Anosov, then the manifold has no conjugate points. By the $C^{1}$ structural stability of Anosov geodesic flows, this implies that the existence of a metric with Anosov geodesic flow in a compact manifold yields that the family of metrics without conjugate points is open in the $C^{2}$ topology. Klingenberg's result together with the results in the previous two paragraphs raise many interesting questions about the topological structure of the set of metrics without conjugate points in a compact manifold. How is the structure of metrics without focal points inside the metrics without conjugate points? It is clear that in an open $C^{2}$ neighborhood of metrics with negative curvature, the no focal points property  still holds. However, if we allow regions of zero curvature in a compact surface, we have already mentioned an example of a compact surface with nonpositive curvature (and hence without focal points) whose geodesic flow is not Anosov. Moreover, such surfaces are in the boundary of metrics without conjugate points in the $C^{2}$ topology. Is the set of metrics without conjugate points connected? What is the boundary of the set of metrics without conjugate points in the $C^{2}$ topology?. 

In 1991, Ruggiero \cite{ruggiero1991creation} showed that the $C^{2}$ interior of the set of metrics without conjugate points in a compact manifold is precisely the set of metrics whose geodesic flows are Anosov. This represented a first step toward the understanding of the topology of the set of metrics without conjugate points. However, the global topology of this set remains largely unexplored, with basic questions about connectedness and path-connectedness still unanswered, as well as the characterization of its boundary. 

 In the work by Jane and Ruggiero~\cite{jane2014boundary}, it was conjectured that the closure of the set of compact Riemannian surfaces with Anosov geodesic flows in the $C^{2}$ topology is the set of Riemannian metrics without conjugate points. The theory of the Ricci flow yields that higher genus compact surfaces with nonpositive curvature whose geodesic flows are not Anosov are indeed in the boundary of the set of metrics with Anosov geodesic flows (see \cite{jane2014boundary} for details). The main result by Jane-Ruggiero was to show that a certain metrics without focal points admiting regions of positive curvature, in the $C^{2}$ boundary of the set of metrics without conjugate points, are actually in the boundary of the set of Anosov metrics, namely, the set of Riemannian metrics whose geodesic flows are Anosov. In other words, it is shown that any metric $g_{0}$ in a  certain family of metrics without focal points in a compact surface can by accumulated by Anosov metrics in the $C^{2}$ topology. The key idea was to apply the theory of geometric flows, the so-called Ricci-Yang-Mills flow, to get a smooth curve $g_{t}$ where $t \in [0,\epsilon)$ of metrics starting at $g_{0}$ such that for every $t>0$, $g_{t}$ is an Anosov metric. 

The existence of this curve is not obvious at all. From the point of view of geometric flows like the Ricci flow for instance, it is not clear at all that such a path might exists just applying the Ricci flow. Actually, work in progress by Keith Burns, Solly Coles, Dong Chen and Florian Richter indicates that the Ricci flow on surfaces initially without conjugate points may produce metrics that do exhibit conjugate points at later times. Although these results are not yet fully established in the literature, they provide strong evidence that the absence of conjugate or focal points is not, in general, stable under Ricci flow.

The Ricci-Yang-Mills flow has a special feature that is not enjoyed by the Ricci flow: the so-called magnetic potential of the Ricci-Yang-Mills flow can be chosen in a way that the evolution of the metric by the flow reduces the regions of positive curvature of the surface and decreases the curvature pointwise in such regions. So we can control at least for small parameters of the flow the behavior of the evolution of the curvature under the flow. The evolution of the Gaussian curvature by the Ricci flow is a diffusion-reaction parabolic, partial differential equation, so the control of the behavior of the curvature seems much more difficult in this case (see \cite{jane2014boundary} for a detailed explanation). 

It looks natural to expect that the same idea would show the existence of smooth path of metrics inside the family of Anosov metrics and ending at a metric with strictly negative curvature. The problem here is that the study of the evolution of the Ricci-Yang-Mills flow has many technical issues inherent to the theory of parabolic partial differential equations. In particular, the study of longtime existence solutions of the flow is quite complicated and requires the application of a heavy analytic machinery.  

The main contribution of this article to the study of Anosov metrics combines Gulliver's idea to get examples of surfaces without focal points and Anosov geodesic flows, with Jane-Ruggiero idea of deforming surfaces controlling the behavior of regions of positive curvature until they desappear from the surface. We show that there exist a family of compact surfaces without focal points, whose regions of positive curvature are concentrated in a finite number of small disks, such that their geodesic flows are Anosov, and there exist smooth paths of Anosov metrics connecting them with metrics of negative curvature. The construction of the smooth paths does not involve geometric flows, it is indeed much more simple but not canonical.

To define this family of surfaces, let us first introduce some notations. Let $\mathcal{M}(\delta, k, \epsilon, \Lambda)$, consists of compact Riemannian surfaces of genus greater than one, such that 
\begin{enumerate}
\item There exist $k$ disjoint balls of radius $\delta$, such that their lifts in the universal covering are pairwise distant by at least $\Lambda$, containing the points of positive curvature, called \emph{generalized bubbles},
\item In the complement of these generalized bubbles the curvature is less than $-\epsilon$.
\end{enumerate}

The main results of this paper are the following:

\begin{teo}\label{Teorema 1 introdução}
Let $0<\epsilon<1$ and $\delta>0$. There exists $\Lambda=\Lambda(\epsilon,\delta)>0$ such that, if $(M,g)\in\mathcal{M}(\delta,k,\epsilon,\Lambda)$ satisfies
$$
K^{+}<\frac{\sqrt{\epsilon}\left(1-\frac{\epsilon}{2}\right)
-\epsilon(2\delta+1)\left(1-\frac{\epsilon}{2}\right)^2}{2\delta},
$$
then:
\begin{enumerate}
\item $(M,g)$ has no focal points;
\item the geodesic flow of $g$ is Anosov.
\end{enumerate}
\end{teo}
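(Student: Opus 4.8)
The plan is to control the behavior of Jacobi fields along an arbitrary geodesic $\gamma$ of $(M,g)$ by estimating how much a solution of the Riccati equation $u' + u^2 + K(\gamma(t)) = 0$ can be perturbed when $\gamma$ passes through a generalized bubble. First I would establish the no focal points property. Away from the bubbles we have $K < -\epsilon < 0$, so the stable Riccati solution $u_s$ (the one obtained as a limit of solutions vanishing at $t\to -\infty$, which exists once we know the metric has no conjugate points, or is constructed as a fixed point) satisfies $u_s \ge \sqrt{\epsilon} > 0$ there, and more generally any solution that is positive and bounded below stays uniformly positive. The quantitative condition on $K^+$ is exactly what is needed so that, along the (at most $2\delta$-long) arc of $\gamma$ inside a bubble, the decrease of $u$ caused by the term $-K^+ \le u' $ is compensated: integrating $u' \ge -K^+$ over a bubble crossing of length $\le 2\delta$ costs at most $2\delta K^+$, and the inequality in the theorem says $2\delta K^+ < \sqrt{\epsilon}(1-\tfrac{\epsilon}{2}) - \epsilon(2\delta+1)(1-\tfrac{\epsilon}{2})^2$, which is precisely the margin guaranteeing $u$ stays positive (hence no focal points) provided the entrance value of $u$ is at least roughly $\sqrt{\epsilon}(1-\tfrac{\epsilon}{2})$.

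The role of $\Lambda = \Lambda(\epsilon,\delta)$ is to guarantee that between two consecutive bubble crossings the geodesic spends enough time in the region $K < -\epsilon$ to \emph{recover}: starting from any positive value $u_0 > 0$, the solution of $u' + u^2 \le u' + u^2 + \epsilon \le 0$, i.e. $u' \le -(u^2 - \epsilon)$, is driven towards (and past, from above) the value $\sqrt{\epsilon}$, and from below it is pushed up towards $\sqrt{\epsilon}$; an explicit integration (the $\artanh$ comes in here, matching the macro declared in the preamble) shows that after a definite time $T = T(\epsilon)$ the solution lies within, say, $\epsilon(1-\tfrac{\epsilon}{2})^2/(\text{const})$ of $\sqrt{\epsilon}$, landing above the threshold $\sqrt{\epsilon}(1-\tfrac{\epsilon}{2})$ needed for the next bubble crossing. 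Choosing $\Lambda$ larger than this recovery length (times the relevant comparison constant relating distance in the universal cover to time along geodesics, together with the $\delta$-size of the bubbles) makes the scheme self-sustaining: one shows by induction along the geodesic that at the entrance of every bubble $u \ge \sqrt{\epsilon}(1-\tfrac{\epsilon}{2})$, and inside every bubble $u$ stays $> 0$. Passing to the limit in the construction of $u_s$ and the unstable solution $u_u$, both are globally defined, which gives absence of conjugate points, and in fact $u_s \ge 0$ everywhere with $u_s > 0$ on the complement of the bubbles — the sharper sign control yields no focal points.

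For the Anosov property I would invoke Eberlein's criterion: a compact surface without conjugate points has Anosov geodesic flow iff the stable and unstable Green bundles are everywhere transverse, equivalently $u_s(t) < u_u(t)$ for all $t$ along every geodesic (strict inequality of the corresponding Riccati solutions). The recovery estimate above in fact delivers a \emph{uniform} gap: outside the bubbles $u_u - u_s$ grows, and the gain over a stretch of length $\ge \Lambda - 2\delta$ of negative curvature $< -\epsilon$ strictly dominates the loss over the next bubble crossing (which is controlled by the same $K^+$ bound), so $\inf(u_u - u_s) > 0$ uniformly over the unit tangent bundle. By the standard characterization this uniform transversality of Green subbundles is exactly the Anosov property. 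The main obstacle is the bookkeeping in the second and third paragraphs: making the constants fit, i.e.\ verifying that the \emph{particular} algebraic expression bounding $K^+$ in the statement is consistent with the threshold value $\sqrt{\epsilon}(1-\tfrac{\epsilon}{2})$ propagated by the induction and with the recovery rate controlled by $\Lambda$ — the curvature bound, the bubble radius $\delta$, and the separation $\Lambda$ must be balanced simultaneously, and extracting $\Lambda(\epsilon,\delta)$ explicitly from the Riccati comparison is the delicate computational heart of the argument.
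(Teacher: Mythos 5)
Your proposal follows the paper's proof in outline: Riccati comparison against the constant--curvature--$(-\epsilon)$ model to recover the value $\sqrt{\epsilon}\bigl(1-\tfrac{\epsilon}{2}\bigr)$ between bubbles (this is Lemma~\ref{lema controlhe fora da bolha}, and the $\artanh$ enters exactly as in Remark~\ref{Obs tamanho de Lambda}), an integrated Riccati estimate inside each bubble, induction over successive bubble crossings, a limiting argument to construct global nonnegative and nonpositive Riccati solutions (hence no focal points by Lemma~\ref{lema instável não negativo e estável não positivo}), and Eberlein's criterion $U^s<U^u$ for the Anosov property. That is the right architecture and the same one the paper uses.

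The one step that does not survive as written is the in-bubble estimate: you assert $u'\ge -K^+$ and conclude the loss over a crossing of length $\le 2\delta$ is at most $2\delta K^+$. But the Riccati equation gives $u'=-u^2-K$, and the $-u^2$ term cannot be discarded; omitting it would make the theorem's threshold come out as $K^+<\sqrt{\epsilon}\bigl(1-\tfrac{\epsilon}{2}\bigr)/(2\delta)$, not the tighter bound in the statement. Lemma~\ref{lema sai positivo da bolha e chega grande na próxima} repairs this by choosing $c$ to be the last time $u$ equals $\sqrt{\epsilon}\bigl(1-\tfrac{\epsilon}{2}\bigr)$ before $t$, so that $u\le\sqrt{\epsilon}\bigl(1-\tfrac{\epsilon}{2}\bigr)$ and hence $u'\ge -\epsilon\bigl(1-\tfrac{\epsilon}{2}\bigr)^2-K^+$ on $[c,t]$; integrating over length $\le 2\delta$ produces the loss $2\delta\bigl(\epsilon(1-\tfrac{\epsilon}{2})^2+K^+\bigr)$, and requiring the exit value to stay above $\epsilon\bigl(1-\tfrac{\epsilon}{2}\bigr)^2$ (so that the gap $U^u-U^s\ge 2\epsilon(1-\tfrac{\epsilon}{2})^2$ is uniform, which is what actually feeds Eberlein's criterion) is exactly what yields the factor $\epsilon(2\delta+1)\bigl(1-\tfrac{\epsilon}{2}\bigr)^2$ in the hypothesis on $K^+$. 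You flag ``making the constants fit'' as the delicate part; it is, and the $u^2$ contribution is precisely the piece your sketch drops. One more small slip: the solution arising from the $T\to-\infty$ limit is the \emph{unstable} one (and it is the one that is $\ge 0$); for absence of focal points you need both $U^u\ge0$ and $U^s\le 0$, the latter by the time-reversed argument.
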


We would like to remark that when
$$
\delta<\frac{1}{2}\left(
\frac{1}{\sqrt{\epsilon}\left(1-\frac{\epsilon}{2}\right)}-1
\right),
$$
the upper bound for $K^{+}$ in Theorem \ref{Teorema 1 introdução} is positive. When it is nonpositive, Theorem \ref{Teorema 1 introdução} can be proven applying the Ricci flow as we mentioned before.

\begin{teo}\label{Teorema 3 introdução}
Let $0<\epsilon<1$ and $\delta>0$, and set
$$
\Lambda := \frac{1}{\sqrt{\epsilon}}\,
\mathrm{artanh}\!\left(1-\tfrac{\epsilon}{2}\right).
$$
Assume that $(M,g)\in\mathcal{M}(\delta,k,\epsilon,\Lambda)$ and that
$$
\epsilon < \frac{-2\pi\,\chi(M)}{\operatorname{vol}(M)}.
$$
Then there exists $w\in C^{\infty}(M)$ such that, if the maximal curvature $K^{+}$ of $(M,g)$ satisfies
$$
K^{+}
< \frac{\sqrt{\epsilon}}{4e^{2\mu}\delta}
\left[
\tanh\!\left(e^{-\mu}\tfrac{\ln 3}{3}\right)
-
\sqrt{\epsilon}\,e^{-\mu}\,
\tanh^{2}\!\left(e^{-\mu}\tfrac{\ln 3}{3}\right)
(4e^{\mu}\delta+1)
\right],
$$
with $\mu:=\max_{M}w$, the following hold:
\begin{enumerate}
\item the metrics $g_{\rho}=e^{2\rho w}g$, $\rho\in[0,1]$, are Anosov and have no focal points;
\item the metric $g_{1}$ has strictly negative sectional curvature.

\end{enumerate}
\end{teo}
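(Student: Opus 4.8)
The plan is to combine Theorem \ref{Teorema 1 introdução} with a conformal uniformization argument. First I would observe that the hypothesis $\epsilon < -2\pi\chi(M)/\operatorname{vol}(M)$ is exactly the Gauss--Bonnet obstruction that must be satisfied for a conformal metric of constant curvature $-\epsilon$ to exist with the prescribed total curvature; so the natural target is the metric $g_1 = e^{2w}g$ of constant curvature $-\epsilon < 0$, obtained by solving the classical prescribed-curvature PDE $\Delta_g w - \epsilon e^{2w} = -K_g$ (which has a smooth solution precisely under this strict inequality, by Kazdan--Warner / Berger). This $w$ is the function whose existence is asserted in the statement, and $\mu = \max_M w$. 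Conclusion (2) is then immediate: $g_1$ has constant curvature $-\epsilon$, hence strictly negative. So the whole content is conclusion (1): that the entire segment $g_\rho = e^{2\rho w}g$, $\rho \in [0,1]$, stays inside the Anosov-with-no-focal-points class.

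For conclusion (1), the key idea is that each $g_\rho$ should again be seen as an element of a suitable class $\mathcal{M}(\tilde\delta, k, \tilde\epsilon, \tilde\Lambda)$, with parameters that I can control uniformly in $\rho$, so that Theorem \ref{Teorema 1 introdução} applies to all of them at once. Under a conformal change $g_\rho = e^{2\rho w}g$, distances are distorted by a factor between $e^{-\mu}$ and $e^{\mu}$ (using $\|w\|_\infty \le \mu$ after possibly also controlling $\min w$; note the bound $e^{-\mu}\delta$ and $4e^\mu\delta$ appearing in the statement are exactly such distortion factors applied to the radius $\delta$). Hence the $k$ positive-curvature disks have $g_\rho$-radius at most $\sim e^\mu \delta$, their lifts remain pairwise $g_\rho$-distant by at least $\sim e^{-\mu}\Lambda$, and outside them the curvature transforms by $K_{g_\rho} = e^{-2\rho w}(K_g - \rho \Delta_g w)$; since $\Delta_g w = \epsilon e^{2w} - K_g$ has a sign that keeps $K_{g_\rho}$ negative in the complement of the bubbles (this needs $\Delta_g w \ge$ something, i.e. a lower bound coming from $\epsilon e^{2w}$ dominating), one gets $K_{g_\rho} < -\tilde\epsilon$ there for a uniform $\tilde\epsilon > 0$. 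The positive curvature of $g_\rho$ inside the bubbles is bounded by $e^{2\mu}K^+$ plus a contribution of $\rho\|\Delta_g w\|_\infty$, and the explicit hypothesis on $K^+$ in the statement — with its factors $e^{2\mu}$, $e^{-\mu}$, $4e^\mu\delta$, and $\tanh(e^{-\mu}\tfrac{\ln 3}{3})$ — is precisely what is needed to make the transformed curvature bound satisfy the inequality required by Theorem \ref{Teorema 1 introdução} for the new parameters (the $\ln 3/3$ and the choice of $\Lambda$ via $\operatorname{artanh}(1-\tfrac{\epsilon}{2})$ being the bookkeeping that optimizes the constant $\Lambda(\epsilon,\delta)$ from that theorem after the $e^{\pm\mu}$ rescaling). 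Once every $g_\rho$ is verified to lie in the appropriate class with admissible maximal curvature, Theorem \ref{Teorema 1 introdução} gives that each has no focal points and Anosov geodesic flow, which is conclusion (1); smoothness of $\rho \mapsto g_\rho$ is clear since $w$ is smooth.

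The main obstacle I expect is the uniform curvature bookkeeping in the complement of the bubbles: one must guarantee that $K_{g_\rho} = e^{-2\rho w}(K_g - \rho\,\Delta_g w)$ stays below a \emph{fixed} negative constant $-\tilde\epsilon$ for \emph{all} $\rho \in [0,1]$ simultaneously, and for this the crucial point is to bound $\Delta_g w$ from below independently of $\rho$ — i.e., to show $\epsilon e^{2w} - K_g$ is bounded below, which follows if $K_g$ is bounded above (true, $K_g \le K^+$) but also requires controlling $\min w$, not just $\max w = \mu$. A secondary technical point is verifying that the distortion estimates on ball radii and on the lift-separation $\Lambda$ survive the conformal change with the specific constants quoted; this is where the somewhat delicate choice $\Lambda = \tfrac{1}{\sqrt{\epsilon}}\operatorname{artanh}(1-\tfrac{\epsilon}{2})$ is forced, since it must dominate $e^\mu$ times the separation constant $\Lambda(\tilde\epsilon,\tilde\delta)$ demanded by Theorem \ref{Teorema 1 introdução} for the rescaled parameters. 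I would handle the first obstacle by a direct maximum-principle estimate on the prescribed-curvature equation (bounding $w$ in $C^0$, then $C^2$ by elliptic regularity), and the second by tracking the Lipschitz constants of the conformal factor explicitly.
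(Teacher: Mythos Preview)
Your choice of $w$ differs from the paper's and your reading of the hypothesis $\epsilon < -2\pi\chi(M)/\operatorname{vol}(M)$ is off. The paper does \emph{not} uniformize to constant curvature $-\epsilon$; it solves the linear Poisson equation $\Delta_g w = K_g - \dfrac{2\pi\chi(M)}{\operatorname{vol}(M)}$ (normalized so $\min_M w = 0$), which gives $K_{g_1} = e^{-2w}\,\dfrac{2\pi\chi(M)}{\operatorname{vol}(M)} < 0$ and, more importantly, yields the clean interpolation $K_g - \rho\,\Delta_g w = (1-\rho)K_g + \rho\,\dfrac{2\pi\chi(M)}{\operatorname{vol}(M)}$. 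The inequality on $\epsilon$ is then used not as a Gauss--Bonnet obstruction to uniformization but to force $\dfrac{2\pi\chi(M)}{\operatorname{vol}(M)} < -\epsilon$, so that this convex combination stays below $-\epsilon$ outside the bubbles for every $\rho$. Your semilinear PDE could in principle be made to work, but the $\min w$ issue you flag is an artifact of that choice; with the paper's $w$ one simply adds a constant.

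The real gap is your plan to put each $g_\rho$ back into a class $\mathcal{M}(\tilde\delta,k,\tilde\epsilon,\tilde\Lambda)$ and invoke Theorem~\ref{Teorema 1 introdução} directly. Membership in $\mathcal{M}$ (Definition~\ref{definição da família}) requires the bubbles to be \emph{strongly convex} balls (and free of focal points) \emph{in the metric $g_\rho$}, and you give no argument for this; indeed the paper explicitly notes that strong convexity may be lost under the deformation. Without it, a $g_\rho$-geodesic could re-enter the same bubble immediately after leaving, and the Riccati bootstrap in Theorem~\ref{Teorema 1 introdução} collapses. The paper does not attempt to restore convexity. Instead it replaces the direct application of Theorem~\ref{Teorema 1 introdução} by an inductive continuity argument along $\rho$: a geometric lemma (two geodesics in a simply connected region of strictly negative curvature meet at most once) shows that a $g_\rho$-geodesic leaving a lifted bubble cannot return before first encircling or meeting some other lifted bubble (Proposition~\ref{proposição nova}); combined with a bound on how long a geodesic can stay in a $2\delta e^{\mu}$-neighborhood of a bubble once the metric is known to have no focal points (Lemma~\ref{lema sai da bolha e não volta mais depois de um tempo}), this gives enough negative-curvature time between bubble visits to run the Riccati comparison. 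One then proves a uniform step size $r>0$ such that no focal points at $\bar\rho$ propagate to $[\bar\rho,\bar\rho+r]$, and iterates. The factors $4e^{\mu}\delta$ and $\tanh(e^{-\mu}\tfrac{\ln 3}{3})$ in the $K^{+}$ bound come precisely from this enlarged neighborhood and the rescaled Riccati comparison, not from a rescaled version of Theorem~\ref{Teorema 1 introdução}. The obstacles you anticipated (curvature bookkeeping, $\min w$) are minor; the obstacle you missed (loss of convexity of the bubbles) is the one that drives the entire structure of the paper's Section~\ref{seção 5}.
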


As in Theorem \ref{Teorema 1 introdução}, when 
$$
\delta<\frac{1}{4e^{\mu}}\left(
\frac{1}{\sqrt{\epsilon}e^{-\mu}\tanh\left(e^{-\mu}\frac{\ln 3}{3}\right)}-1
\right),
$$
the upper bound for $K^{+}$ is positive.

Some remarks about the statements of the Theorems. The family of surfaces considered in Theorem \ref{Teorema 1 introdução} is not empty. Indeed, the distance $\Lambda$ between different lifts of generalized bubbles in the universal covering can be achieved taking surfaces with high genus and just one generalized bubble for instance. A high genus correspond to large area and diameter of fundamental domains of the surface if the curvature is constant and negative. The starting idea of Gulliver's examples is to make surgery on surfaces like the above ones, deforming the surfaces in small neighborhoods and gluing disks with positive curvature. The hypothesis about the upper bound of the negative curvature outside the generalized bubbles is a mild assumption on the curvature since in general, the negative curvature outside the bubbles might be much more negative due to Gauss-Bonnet Theorem. Moreover, the upper bound on $K^{+}$ in the statement of Theorem \ref{Teorema 1 introdução} is positive if $0< \epsilon <1$ and $\delta$ is small enough. Notice that $\delta$ is an upper bound for the radius of the generalized bubbles.

Analogously, the family of surfaces considered in Theorem \ref{Teorema 3 introdução} is non empty. However, to see that involves a more subtle argument. This family is contained in the family considered in Theorem \ref{Teorema 1 introdução}, with an additional assumption involving a smooth positive function $w$ that is not explicitly defined (see Section \ref{seção 4} for details). The upper bound for $K^{+}$ in the statement of Theorem \ref{Teorema 3 introdução} depends on $w$, and again we can verifiy that the curvature of surfaces which are close to surfaces of nonpositive curvature where the zero curvature points are contained in a finite set of disjoint, open neighborhoods, satisfy this bound. A more detailed discussion about the subject is made in Section \ref{seção 5}.

Let us briefly describe the contents of the article. 

In Section \ref{seção 1} we collect the necessary background and preliminary facts manifolds without conjugate points and without focal points. This includes brief review of the relationship between the behaviour of solutions of the well known Riccati equation associated to geodesics without conjugate points and the characterization of no focal points in terms of their geometry. We also show, by a standard argument, that every point admits a neighbourhood free of focal points.

In Section \ref{seção 2} we recall the definition of Anosov geodesic flows and review their relationship with solutions of the Riccati equation, Namely, following Eberlein's work, the geodesic flow of the surface is Anosov if and only if for each geodesic the maximal and the minimal solution of the corresponding Riccati equation, defined in the whole real line, are different. Moreover, the surface has no focal points if and only if each of these solutions is either everywhere nonpositive or everywhere nonnegative. 

The purpose of Section \ref{seção 3} is to prove Theorem \ref{Teorema 1 introdução}. The main idea of the section is to apply the comparison theorems of the Riccati equation to the family of surfaces considered in Theorem \ref{Teorema 1 introdução}, to ensure that the Riccati equation on each geodesic of the surface has two different solutions defined in the whole real line, satisfying the conditions described in the previous paragraph. 

In Section \ref{seção 4} we exhibit a explicit conformal deformation of metrics that does not rely on geometric flows arguments. The motivation for the definition of the conformal deformations is the main idea introduced by Jane-Ruggiero in \cite{jane2014boundary} to try to "approach" Anosov surfaces starting from a surface without focal points. Namely, the intuitive idea that reducing the regions of positive curvature and decresing pointwise the curvature in these regions should provide some hyperbolicity to the geodesic flow. The conformal deformations introduced in Section \ref{seção 4} are based on the classical formula of the conformal curvature, we show that for appropriate conformal deformations of the Riemannian metric, we succeed in controlling the behavior of the curvature along the conformal deformation following Jane-Ruggiero's idea. The conformal factor $w$ of the deformation, mentioned in Theorem \ref{Teorema 3 introdução}, is obtained as the solution of an inverse problem of the Laplacian of the surface, This is why the function $w$ is not explicit. The conformal deformations are not canonical, there are infinitely many choices. We just consider a natural one based on Gauss-Bonnet Theorem. 

Finally, in Section \ref{seção 5} we reuse the arguments of Section \ref{seção 3}, in a more technical form, to derive conditions on the surface under which the deformations introduced in Section \ref{seção 4} actually preserve the Anosov property and the absence of focal points. This is the most technical part of the article, and the final goal of eliminating the regions of positive curvature through the conformal deformations is attained, thus proving Theorem \ref{Teorema 3 introdução}. 

We think that the techniques applied in the article can be used to show that Jane-Ruggiero's examples of surfaces without focal points can be connected by a conformal path of metrics preserving the Anosov property to a metric of negative curvature. The difficulty here regarding the ideas applied in the present article is that the regions of positive curvature considered in Jane-Ruggiero's paper have no a priori restrictions, so the conformal deformation proposed in the present article might generate focal points. We shall deal with this problem in a further article. 

\setcounter{secnumdepth}{1}
\section{Preliminaries: Surfaces free of focal points and Riccati equation}\label{seção 1}

In this section, we present fundamental concepts about surfaces without focal points that form the basis of this study. The exposition is based primarily on  \cite{pesin1977geodesic} and \cite{eberlein1973geodesic}.

Throughout this paper, we consider $(M,g)$ to be a $C^{\infty}$ Riemannian manifold. We denote by $(\tilde{M},\tilde{g})$ its universal covering, where $\tilde{g}$ is the pullback of $g$ via the covering map. Many results will be formulated or interpreted in terms of $(\tilde{M},\tilde{g})$ when convenient.

We begin with some fundamental definitions from the theory of focal points, including the notions of open sets without focal points and Riemannian manifolds without focal points.

\begin{defi}
     Let $(M,g)$ be a $C^{\infty}$ Riemannian manifold and $\gamma:[0,l]\to M$ a geodesic. 
     \begin{enumerate}
         \item Let $\gamma:[0,l]\to M$ be a geodesic. Then $\gamma$ has no focal points in $[0,l]$ if and only if for every Jacobi field $J(t)$ along $\gamma$, perpendicular to $\gamma'(t)$ and satisfying $J(0)=0$, the norm of $J(t)$ is increasing for $t \in [0,l]$.

         \item Let $B$ be an open set in $(M,g)$. We say that $B$ is free of focal points if the geodesic segments contained in $B$ are free of focal points. 
         
         \item  We say that $(M,g)$ \textit{has no focal points} if every geodesic in $(M,g)$ is free of focal points.
     \end{enumerate}
\end{defi}

When studying Riemannian manifolds, Jacobi fields play a central role in understanding the behavior of geodesics. We now introduce some concepts related to Jacobi fields on manifolds without conjugate points or without focal points.

\begin{defi}
    Let $(M,g)$ be a compact Riemannian manifold without conjugate points. Let $\gamma$ be a geodesic parametrized by arc length in $M$. Suppose $V$ is a vector orthogonal to $\gamma'(0)$. The \textit{stable Jacobi field} $J^{s}_{V}$ with initial condition $J^{s}_{V}(0) = V$ is given by
    $$J^{s}_{V}(t)=\lim_{T\to +\infty}J_{T}(t)$$
    where $J_{T}$ is a Jacobi field along $\gamma$ with boundary conditions $J_{T}(0) = V$ and $J_{T}(T) = 0$. The \textit{unstable Jacobi field} $J^{u}_{V}$ with initial condition $J^{u}_{V}(0) = V$ is given by
    $$J^{u}_{V}(t)=\lim_{T\to -\infty}J_{T}(t).$$
\end{defi}

Jacobi fields as described above are known as Green's Jacobi fields. These fields are important role in the study of Anosov metrics.

Let $\theta = (p, u) \in T_1M$, and denote by $O_{\theta}$ the subspace of $T_{\theta}T_1M$ orthogonal to the geodesic vector field with respect to the Sasaki metric. Recall that, under the Sasaki metric, the tangent bundle $TT_1M$ can be understood as the direct sum of the horizontal and vertical subbundles. The called \textit{Green bundles} (or \textit{Green subspaces}) are subspaces of $O_{\theta}$ defined by
\begin{equation*}
E^s(\theta) = \bigcup_{V \in (\gamma'_{\theta})^{\perp}} \left\{ \left( J_V^s(0), {J_V^s}'(0) \right) \right\}, \quad
E^u(\theta) = \bigcup_{V \in (\gamma'_{\theta})^{\perp}} \left\{ \left( J_V^u(0), {J_V^u}'(0) \right) \right\},
\end{equation*}
where $J_V^s$ and $J_V^u$ denote the stable and unstable Green Jacobi fields, respectively. 

The subspace $E^s(\theta)$ is called the \textit{stable Green subspace}, while $E^u(\theta)$ is the \textit{unstable Green subspace}. The bundles $E^s$ and $E^u$ are invariant under the geodesic flow, but are not necessarily continuous in general. However, if the metric is Anosov or the manifold has no focal points, then these subbundles are indeed continuous.

Now suppose that $(M,g)$ is a compact Riemannian surface without conjugate points, and let $\gamma$ be a geodesic with $||\gamma'||=1$ in $M$. A Jacobi field along $\gamma$ perpendicular to $\gamma'$ can be written as
$$J(t)=f(t)e(t)$$
where $e(t)$ is a unit parallel vector field along $\gamma$ perpendicular to $\gamma'$ and $f(t)$ is a solution to the scalar Jacobi equation
$$f''(t)+K(t)f(t)=0.$$
We then define
$$U(t):=\frac{f'(t)}{f(t)}$$
which is a solution to the \textbf{Riccati equation}
$$U'(t)+U^{2}(t)+K(t)=0.$$

Again, solutions to the Riccati equation may have asymptotes since $f(t)$ may be zero for some $t \in \mathbb{R}$. However, if $J(t)$ is a Green's Jacobi field, the Riccati solution induced by it is entire. In particular, we have the following:
\begin{defi}
   Consider $\theta = (p,v)$ and $W \in T_{1}M$ be perpendicular to $v$. Define the \textit{stable solution of the Riccati equation} $U^{s}_{\theta}$ as the solution of the Riccati equation induced by $J^{s}_{W}$ and the \textit{unstable solution of the Riccati equation} $U^{u}_{\theta}$ as the Riccati solution induced by $J^{u}_{W}$.
\end{defi}
In particular, $U^s_{\theta}$ and $U^u_{\theta}$ are defined for all $t \in \mathbb{R}$. It is possible to study the absence of focal points through the Riccati equation. We consider the following properties:

\begin{lema}
    Let $(M,g)$ be a compact surface without focal points. Then
    \begin{enumerate}
        \item The norms of stable Jacobi fields are non-increasing. The norms of unstable Jacobi fields are non-decreasing.
        \item A Jacobi field is parallel if and only if it is both unstable and stable at the same time.
        \item Given $\theta \in T_{1}M$, we denote by $U^{s}_{\theta}$ and $U^{u}_{\theta}$ the stable and unstable solutions of the Riccati equation on $\gamma_{\theta}$. The other solutions of the Riccati equation tend to $U_{\theta}^{u}(t)$ as $t \to +\infty$ and to $U_{\theta}^{s}(t)$ as $t \to -\infty$.
        \item The solutions $U^{s}_{\theta}$ and $U^{u}_{\theta}$ depend continuously on $\theta$.
    \end{enumerate}
\end{lema}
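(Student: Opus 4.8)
The plan is to establish the four assertions by exploiting the correspondence between Jacobi fields, the scalar Jacobi equation, and the Riccati equation that was set up above, together with the defining property of manifolds without focal points: Jacobi fields vanishing at a point have strictly increasing norm afterward. I would treat assertion (1) first, since it is the geometric heart of the matter and the remaining items follow from it by soft arguments.

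\textbf{Assertion (1).} Let $J^{s}_{V}$ be a stable Jacobi field, obtained as the limit $\lim_{T\to+\infty}J_{T}$ where $J_{T}(0)=V$ and $J_{T}(T)=0$. For each fixed $T$, the Jacobi field $\hat J_{T}(t):=J_{T}(T-t)$ vanishes at $t=0$, so by the no-focal-points hypothesis $\|\hat J_{T}(t)\|$ is increasing on $[0,T]$; equivalently $\|J_{T}(t)\|$ is decreasing on $[0,T]$. Passing to the limit $T\to+\infty$ (using that the convergence $J_{T}\to J^{s}_{V}$ is in fact $C^{1}$ on compact sets, a standard fact for manifolds without conjugate points that I would cite), the norm $\|J^{s}_{V}(t)\|$ is non-increasing on $\mathbb{R}$. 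The statement for unstable Jacobi fields is the time-reversed version of this. I would phrase the monotonicity in terms of the Riccati solution: writing $J^{s}_{V}=f e$ with $f>0$ (nonvanishing, since $J^{s}_{V}$ is a Green Jacobi field and hence induces an entire Riccati solution), non-increase of $\|J^{s}_{V}\|=|f|$ is equivalent to $U^{s}_{\theta}=f'/f\le 0$ everywhere, and similarly $U^{u}_{\theta}\ge 0$ everywhere. This reformulation is the bridge to the other three items.

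\textbf{Assertions (2) and (3).} For (2), if $J$ is parallel then $f$ is constant, so $U=f'/f\equiv 0$, which is simultaneously a nonpositive and a nonnegative entire Riccati solution; by uniqueness of the stable and unstable Riccati solutions (they are the minimal and maximal entire solutions, per Eberlein's characterization recalled in Section \ref{seção 2}) and the squeeze $U^{s}_{\theta}\le 0\le U^{u}_{\theta}$, a Jacobi field that is both stable and unstable must have $U^{s}_{\theta}=U^{u}_{\theta}\equiv 0$, hence $f'\equiv 0$ and $J$ is parallel; the converse is immediate. For (3), I would recall that any entire solution $U$ of the Riccati equation satisfies $U^{s}_{\theta}(t)\le U(t)\le U^{u}_{\theta}(t)$ (minimal/maximal property), and that a non-Green solution $U$ coming from a Jacobi field $f e$ with $f(t_{0})=0$ decreases to $-\infty$ at its left-hand blow-up and comes from $+\infty$; the comparison/monotonicity of Riccati solutions then forces $U(t)-U^{u}_{\theta}(t)\to 0$ as $t\to+\infty$ and $U(t)-U^{s}_{\theta}(t)\to 0$ as $t\to-\infty$. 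This is the classical asymptotic behavior of Riccati solutions on manifolds without conjugate points, and I would give the argument via the fact that the difference $U-U^{u}_{\theta}$ satisfies a linear first-order equation whose coefficient $-(U+U^{u}_{\theta})$ has the right sign for large $t$.

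\textbf{Assertion (4).} Continuity of $\theta\mapsto U^{s}_{\theta},U^{u}_{\theta}$ follows from continuity of the Green bundles $E^{s},E^{u}$, which, as stated in the text, holds in the absence of focal points; concretely, $U^{s}_{\theta}(0)$ is read off from the slope of $E^{s}(\theta)$, and then continuous dependence on initial conditions for the Riccati ODE (whose coefficient $K$ varies continuously with $\theta$ along the flow) propagates continuity to all $t$. The main obstacle I anticipate is assertion (1): making the passage to the limit rigorous requires knowing that $J_{T}\to J^{s}_{V}$ together with first derivatives uniformly on compact sets, so that the closed condition ``norm non-increasing'' survives the limit — the non-strict inequality is exactly what one gets, and indeed strict monotonicity can fail precisely on parallel fields, which is why item (2) is phrased the way it is. Everything else is a fairly mechanical translation between the Jacobi and Riccati pictures.
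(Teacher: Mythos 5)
The paper does not prove this lemma; it is stated as background material on surfaces without focal points, with the exposition attributed at the start of Section \ref{seção 1} to Pesin and Eberlein. So there is no in-paper argument to compare yours against, and I will just assess your sketch.

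Your treatment of (1), (2), and (4) is correct and follows the standard route. For (1), the time-reversal of $J_{T}$ is exactly the right trick: $\hat J_{T}(s)=J_{T}(T-s)$ vanishes at $s=0$, so $\|\hat J_{T}\|$ is increasing, hence $\|J_{T}\|$ is decreasing on $(-\infty,T]$, and the (standard, $C^{1}$ on compacts) convergence $J_{T}\to J^{s}_{V}$ carries the closed condition over; the reformulation $U^{s}\le 0\le U^{u}$ is the useful byproduct. For (2), the forward implication is the squeeze; for the converse you should say explicitly that $J$ parallel forces $f''\equiv 0$ and hence $K\equiv 0$ along $\gamma$, so the only \emph{entire} Riccati solution along $\gamma$ is $U\equiv 0$, whence $U^{s}=U^{u}=0=U_{J}$ and $J=J^{s}=J^{u}$ by uniqueness of Jacobi fields with given initial data; ``immediate'' is hiding this small step. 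Item (4) via continuity of the Green bundles and continuous dependence of the Riccati ODE is fine.

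The place to tighten is (3). Saying that the coefficient $-(U+U^{u}_{\theta})$ ``has the right sign'' (that is, is nonpositive) only tells you that $V:=U-U^{u}_{\theta}\ge 0$ is nonincreasing, so $V(t)\to L\ge 0$; it does not by itself give $L=0$, since the coefficient may tend to zero. The cheap fix that closes the gap uses part (1): since $U^{u}_{\theta}\ge 0$ and $U=V+U^{u}_{\theta}$, one has $U+U^{u}_{\theta}=V+2U^{u}_{\theta}\ge V$, hence $V'=-(U+U^{u}_{\theta})V\le -V^{2}$, and integrating gives $V(t)\le V(t_{1})/(1+V(t_{1})(t-t_{1}))\to 0$. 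The symmetric argument with $W:=U^{s}_{\theta}-U\ge 0$ for $t<t_{0}$ gives $W'\ge W^{2}$ and hence $W\to 0$ as $t\to -\infty$. Alternatively one can run the Eberlein-style compactness argument (pass to a limit of $\phi_{t_{n}}(\theta)$ and invoke maximality of $U^{u}$ together with Proposition \ref{lema da cota fora da vizinhança da assintota}); either way the statement needs more than the sign of the coefficient. With that repair your outline is complete and is, as far as one can tell, the proof the authors had in mind when they cited Pesin and Eberlein.
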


In fact, we make use of the following equivalence in this work:

\begin{lema}\label{lema instável não negativo e estável não positivo}
    Let $(M,g)$ be a compact surface. Then $(M,g)$ has no focal points if and only if $U^u(t) \ge 0$ and $U^s(t) \le 0$ for all $t$ and for all geodesics $\gamma$.
\end{lema}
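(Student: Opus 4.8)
The plan is to prove both implications through the characterization of focal points in terms of Jacobi fields vanishing at a point, translated into the Riccati picture. Fix a geodesic $\gamma$ parametrized by arc length and write an orthogonal Jacobi field as $J(t)=f(t)e(t)$ with $f''+Kf=0$ as in the preliminaries. The key observation is that the definition of ``no focal points in $[0,l]$'' — every Jacobi field with $J(0)=0$ has $\|J(t)\|$ increasing on $[0,l]$ — is equivalent, in the surface case, to: every solution $f$ of the scalar Jacobi equation with $f(0)=0$ satisfies $f(t)f'(t)>0$ for $t\in(0,l]$, i.e. $(f^2)'>0$. Since ``$(M,g)$ has no focal points'' means this holds along every geodesic for every $l$, and since reversing time and translating along $\gamma$ produces every such vanishing-at-a-point Jacobi field, the no-focal-points hypothesis is a statement about the sign of $f f'$ for all Jacobi solutions vanishing somewhere.

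For the forward implication, assume $(M,g)$ has no focal points. Consider the unstable solution $U^u_\theta$ at $\theta=(p,v)=\gamma'(0)$; by the earlier lemma it is entire and is the limit as $T\to-\infty$ of the Riccati solutions $U_T$ induced by Jacobi fields $J_T$ with $J_T(T)=0$. Each such $J_T$ vanishes at $t=T<0$, so by the no-focal-points property applied on $[T,0]$ (using that $\|J_T\|$ increases away from its zero) we get $f_T(0)f_T'(0)\ge 0$, hence $U_T(0)=f_T'(0)/f_T(0)\ge 0$. Actually I want this at every $t$, not just $t=0$: for fixed $t$, translate the basepoint to $\gamma(t)$ and repeat, or simply note $J_T$ vanishes at $T$ and $\|J_T\|$ is increasing on $[T,\infty)$ by the focal-point hypothesis, so $U_T(s)\ge 0$ for all $s>T$; letting $T\to-\infty$ gives $U^u(s)\ge 0$ for all $s$. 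The argument for $U^s$ is the time-reversed version: $J_T$ with $J_T(T)=0$ for $T>0$ has $\|J_T\|$ increasing as $t$ moves toward $T$, equivalently decreasing in forward time on $(-\infty,T]$, giving $U_T(s)\le 0$ and hence $U^s(s)\le 0$ in the limit $T\to+\infty$.

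For the converse, assume $U^u\ge 0$ and $U^s\le 0$ along every geodesic. Let $J(t)=f(t)e(t)$ be a Jacobi field with $J(t_0)=0$; I must show $\|J\|$ is increasing on any interval to the right of $t_0$ (a symmetric argument handles the left). The point is that the Riccati solution $V(t)=f'(t)/f(t)$ determined by $J$ blows up to $+\infty$ as $t\downarrow t_0$ (since $f(t_0)=0$ and, by no conjugate points, $f$ does not vanish again), and by item (3) of the earlier lemma all non-Green Riccati solutions are squeezed between $U^s$ and $U^u$ in a suitable sense — more precisely, a Riccati solution that is defined and finite on an interval and lies above $U^s$ at one time stays above $U^s$, by uniqueness/comparison for the Riccati ODE. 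Since $V\to+\infty$ at $t_0^+$ it certainly starts above $U^s\le 0$; I claim it stays $\ge 0$, i.e. $V(t)\ge 0$ for $t>t_0$ as long as $V$ is finite. If instead $V$ became negative, by continuity it would cross zero at some $t_1>t_0$, and then the Riccati solution through the point $(t_1,0)$ would have to be compared with $U^u$: but $U^u(t_1)\ge 0 = V(t_1)$, and the solution that agrees with $V$ near $t_1$ and has nonpositive slope there must lie below $U^u$ for $t<t_1$ close to $t_1$ while exceeding it in value — contradiction with $U^u\ge 0$ together with the ordering of Riccati solutions. Hence $V\ge 0$ on $(t_0,\cdot]$, so $f f'=f^2 V\ge 0$, i.e. $(f^2)'\ge 0$, and strict monotonicity follows from the fact that $f^2$ cannot be locally constant (the Jacobi equation would force $f\equiv 0$ on an interval). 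Thus $\|J\|$ is non-decreasing; to upgrade to strictly increasing one observes that if $\|J(t_1)\|=\|J(t_2)\|$ for $t_0<t_1<t_2$ then $V\equiv 0$ on $[t_1,t_2]$, forcing $K\equiv 0$ and $f$ affine there, incompatible with $f(t_0)=0$, $f\ne 0$ on $(t_0,\infty)$ unless... — a short case analysis closes this. Therefore $(M,g)$ has no focal points.

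The main obstacle I expect is making the comparison argument in the converse airtight: Riccati solutions are only ordered while they are simultaneously finite, and the relevant solution $V$ has a pole at $t_0$, so one must be careful about where $U^s$, $U^u$ and $V$ are all defined and about the direction of the comparison inequality (the Riccati flow is order-preserving, but one has to track which solution is ``maximal'' and ``minimal'' on a given interval). This is exactly the content alluded to in item (3) of the earlier lemma — that every other solution tends to $U^u$ as $t\to+\infty$ and to $U^s$ as $t\to-\infty$ — so I would lean on that lemma to pin down the sandwiching, rather than re-deriving the comparison theory from scratch. The rest of the argument is the routine translation between $\|J\|' $, $ff'$, and the sign of the Riccati solution.
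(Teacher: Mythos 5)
Your forward direction is essentially identical to the paper's: both express $U^u$ as the limit of Riccati solutions $U_T$ coming from $J_T$ with $J_T(T)=0$, use the no-focal-points hypothesis to get $\|J_T\|$ increasing on $[T,\infty)$ and hence $U_T\ge 0$ there, and pass to the limit (and the time-reversal for $U^s$).

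For the converse your strategy is also the paper's — produce the blow-up of $V=f'/f$ at $t_0^+$ and compare against the entire solution $U^u$ — but the execution takes a detour that makes the argument both harder and slightly weaker than it needs to be. The paper compares $V$ directly with $U^u$: since $V(t)\to+\infty$ as $t\downarrow t_0$ while $U^u(t_0)$ is finite, $V>U^u$ just to the right of $t_0$, and by uniqueness for the Riccati ODE (solutions cannot cross) $V(t)>U^u(t)\ge 0$ for all $t>t_0$ where $V$ is finite. This gives \emph{strict} positivity $V>0$ immediately, so $\|J\|^2$ has strictly positive derivative and the monotonicity is automatic — no ``short case analysis'' needed. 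Your version first compares with $U^s$ (which only bounds $V$ from below by a nonpositive quantity) and then runs a contradiction argument about a hypothetical zero-crossing $t_1$ that is a more roundabout way of saying the same no-crossing fact; it only yields $V\ge 0$, which is why you are left worrying about the upgrade to strict increase. Also be careful about your suggestion to ``lean on item~(3) of the earlier lemma'' for the sandwiching: that lemma is stated for surfaces \emph{already known} to have no focal points, which is precisely what you are trying to prove in this direction, so invoking it would be circular. The non-circular tool, and the one the paper uses, is simply uniqueness of solutions of the Riccati ODE; swapping the direct comparison with $U^u$ in place of your two-step argument removes both issues.
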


\begin{proof}
$\Rightarrow)$ Suppose that $(M,g)$ has no focal points. Let $\gamma$ be a geodesic and $J_T$ be a Jacobi field perpendicular to $\gamma$ such that $J_T(0)$ has norm $1$ and $J_T(T) = 0$. Since $(M,g)$ is a surface without focal points, we allow an abuse of notation and write $J_T'(t) > 0$ for all $t \ge T$.

But,
$$J^u(t) = \lim_{T \to -\infty} J_T(t), \quad (J^u)'(t) = \lim_{T \to -\infty} J_T'(t).$$
It follows that $(J^u)'(t) \ge 0$, and hence $U^u(t) \ge 0$ for all $t$. Similarly, we have $U^s(t) \le 0$.

$\Leftarrow)$ Now suppose that $U^u(t) \ge 0$ and $U^s(t) \le 0$ for all $t$ and all geodesics $\gamma$. The existence of $U^u$ and $U^s$ implies that there are no conjugate points. Let $J$ be a Jacobi field perpendicular to $\gamma$ such that $J(T) = 0$. Consider the Riccati solution associated to $J$ for $t > T$, that is,
$$U(t) = \frac{J'(t)}{J(t)} \quad \text{for all } t > T.$$
Clearly, $U$ has a vertical asymptote at $t = T$.

On the other hand, since the surface is compact (and thus the curvature is bounded), a standard comparison argument for Riccati equations, in the style of Sturm–Liouville and as done in \cite{eberlein1973geodesic}, gives
$$\lim_{t \to T^+} U(t) = +\infty.$$

Since the unstable solution $U^u$ is defined for all $t$, by uniqueness of solutions to the Riccati equation, $U^u$ is a lower bound for $U$, and therefore $U(t) > 0$ for all $t > T_0$. This ensures that $J(t)$ is increasing for $t \ge T_0$.

Hence, since this holds for any radial Jacobi field, $(M,g)$ has no focal points.
\end{proof}

At the beginning of this section, we introduced definitions related to focal points, including open sets without focal points. One might initially think that the absence of focal points is a rare or rigid property. However, this is not the case: around any point on a compact Riemannian surface, there exists a neighborhood free of focal points.

\begin{lema}\label{proposição da prova que existe vizinhanças sem pontos focais}
   Let $(M,g)$ be a compact Riemannian surface, and let $K^+ = \max K$ be the maximal sectional curvature on $M$. Then, for any point $p \in M$, the ball $B(p, \frac{\pi}{4\sqrt{K^+}})$ is free of focal points.

In other words, given $\delta > 0$, there exists $K^+ > 0$ such that if the sectional curvature of $(M,g)$ satisfies $K \le K^+$, then every ball of radius $\delta$ is free of focal points.
\end{lema}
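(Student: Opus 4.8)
The plan is to reduce the claim to a scalar Sturm comparison estimate for the Jacobi equation, exactly in the spirit of the Riccati arguments used earlier in the section. By the definition of focal points given above, I need to show that every perpendicular Jacobi field $J(t) = f(t)e(t)$ along a geodesic segment $\gamma$ contained in $B\bigl(p,\tfrac{\pi}{4\sqrt{K^+}}\bigr)$ with $f(0)=0$ has $\lVert J(t)\rVert$ increasing, i.e. $f(t)f'(t)>0$, for as long as $\gamma$ stays in the ball. Since $\gamma$ is parametrized by arc length and passes through the ball, the relevant parameter interval has length at most $2\cdot\tfrac{\pi}{4\sqrt{K^+}} = \tfrac{\pi}{2\sqrt{K^+}}$; after a translation of the parameter I may as well assume the zero of $f$ occurs at $t=0$ and that $\gamma(t)$ lies in the ball for $t$ in an interval contained in $(-\tfrac{\pi}{2\sqrt{K^+}},\tfrac{\pi}{2\sqrt{K^+}})$.

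First I would invoke the standard Rauch / Sturm comparison: since $K(t)\le K^+$ along $\gamma$, the solution $f$ of $f''+K(t)f=0$ with $f(0)=0$, $f'(0)>0$, dominates the corresponding solution $\bar f(t)=\tfrac{f'(0)}{\sqrt{K^+}}\sin(\sqrt{K^+}\,t)$ of the constant-curvature model equation $\bar f'' + K^+\bar f=0$ on the interval where $\bar f>0$, that is on $(0,\tfrac{\pi}{\sqrt{K^+}})$. In particular $f(t)>0$ on $(0,\tfrac{\pi}{\sqrt{K^+}})$, so no conjugate point occurs inside the ball. Then I would pass to the Riccati quotient $U=f'/f$, which satisfies $U'+U^2+K=0$ with $U(0^+)=+\infty$, and compare it from below with the model Riccati solution $\bar U(t)=\sqrt{K^+}\cot(\sqrt{K^+}\,t)$: the comparison lemma for the Riccati equation cited from \cite{eberlein1973geodesic} gives $U(t)\ge \sqrt{K^+}\cot(\sqrt{K^+}\,t)$ for $t\in(0,\tfrac{\pi}{\sqrt{K^+}})$. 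The right-hand side is strictly positive precisely for $t\in(0,\tfrac{\pi}{2\sqrt{K^+}})$, hence $U(t)>0$, i.e. $f'(t)>0$ and $\lVert J(t)\rVert$ is strictly increasing, on all of $(0,\tfrac{\pi}{2\sqrt{K^+}})$. The symmetric argument handles $t<0$. Since the whole geodesic chord inside the ball of radius $\tfrac{\pi}{4\sqrt{K^+}}$ is contained in a parameter interval of length $\tfrac{\pi}{2\sqrt{K^+}}$ around the zero of $f$, this covers every perpendicular Jacobi field vanishing somewhere in the ball, which is what the definition of a focal-point-free open set requires.

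For the reformulated statement, given $\delta>0$ I simply choose $K^+>0$ small enough that $\tfrac{\pi}{4\sqrt{K^+}}>\delta$, equivalently $K^+ < \bigl(\tfrac{\pi}{4\delta}\bigr)^2$; then any ball of radius $\delta$ is contained in a ball of radius $\tfrac{\pi}{4\sqrt{K^+}}$ and the first part applies. The main obstacle, and the only point needing a little care, is the bookkeeping of the parameter interval: one must make sure that a geodesic segment lying inside a ball of radius $r$ about $p$ has the zero of its Jacobi field and the whole segment simultaneously inside a parameter window of length $2r$ on which the comparison with $\sqrt{K^+}\cot(\sqrt{K^+}t)$ yields positivity. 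This is where the factor $\tfrac{\pi}{4}$ rather than $\tfrac{\pi}{2}$ enters — the diameter of the ball, not its radius, bounds the length of the chord — and it is the step I would write out most carefully. Everything else is the classical Sturm–Riccati comparison already used in the proof of Lemma \ref{lema instável não negativo e estável não positivo}.
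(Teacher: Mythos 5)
Your proof is correct and takes essentially the same route as the paper: compare along the geodesic with the constant-curvature-$K^+$ model, use the cotangent Riccati solution $\sqrt{K^+}\cot(\sqrt{K^+}\,t)$ as a lower barrier for $U=f'/f$, conclude $U>0$ on $(0,\tfrac{\pi}{2\sqrt{K^+}})$, and convert from radius to chord length with the factor $\tfrac{\pi}{4}$. The only stylistic difference is that you invoke the standard Sturm--Riccati comparison directly at the singularity, whereas the paper handles the blow-up at $t=0$ by a regularization argument (comparing solutions matched at $t=\delta$ and letting $\delta\to 0$), but the content and the final estimate are identical.
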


\begin{proof}
Fix $p \in M$, and let $\gamma$ be a geodesic with $\gamma(0) = p$. Let $J$ be a Jacobi field along $\gamma$ such that $J(0) = 0$ and $J'(0) > 0$. Consider the comparison surface $S_{K^+}$ with constant curvature $K^+$, and the Jacobi field $J_{K^+}$ along the corresponding geodesic on $S_{K^+}$ with the same initial conditions.

The general solution of the Jacobi equation on $S_{K^+}$ is
$$
J_{K^+}(t) = B \sin(\sqrt{K^+}t), \quad J_{K^+}'(t) = \sqrt{K^+} B \cos(\sqrt{K^+}t),
$$
and $J_{K^+}$ is strictly increasing on $[0, \frac{\pi}{2\sqrt{K^+}})$. Thus, the Riccati solution $V(t) = \frac{J_{K^+}'(t)}{J_{K^+}(t)}$ satisfies $V(t) > 0$ on $(0, \frac{\pi}{2\sqrt{K^+}})$.

Now fix $0 < \delta < \frac{\pi}{2\sqrt{K^+}}$ and define a Jacobi field $J_\delta$ along $\gamma$ such that $J_\delta(\delta) = J_{K^+}(\delta)$ and $J_\delta'(\delta) = J_{K^+}'(\delta)$. Let $U_\delta$ be the associated Riccati solution. Then,
\begin{align}
    U_\delta'(t) + U_\delta^2(t) + K(t) &= 0, \label{eq:U_riccati1}\\
    V'(t) + V^2(t) + K^+ &= 0.\label{eq:V_riccati1}
\end{align}

Subtracting the equations \ref{eq:U_riccati1} and \ref{eq:V_riccati1} at $t = \delta$, we get
$$
U_\delta'(\delta) - V'(\delta) = K^+ - K(\delta) \ge 0,
$$
so $U_\delta(t) > V(t)$ near $t = \delta$. By continuity and the comparison principle, $U_\delta(t) > V(t)$ for all $t$ in $(0, \delta)$. Passing to the limit as $\delta \to 0$, we obtain that the Riccati solution $U$ associated to $J$ satisfies $U(t) \ge V(t) > 0$ on $(0, \frac{\pi}{2\sqrt{K^+}})$. Hence, $J(t)$ is strictly increasing on this interval.

This shows that if $p_1 \in B(p, \frac{\pi}{2\sqrt{K^+}})$, then $p$ and $p_1$ are not focal points of each other. In particular, if $p_1, p_2 \in B(p, \frac{\pi}{4\sqrt{K^+}})$, then the distance between them is less than $\frac{\pi}{2\sqrt{K^+}}$, and they are not focal points of each other.

To conclude the second statement, given $\delta > 0$, just choose $K^+$ such that $\delta \le \frac{\pi}{4\sqrt{K^+}}$, or equivalently, $K^+ \le \frac{\pi^2}{16\delta^2}$. Then, by the same argument, each ball of radius $\delta$ is free of focal points.
\end{proof}

\section{Preliminaries: Compact Surfaces, Anosov Metrics and Riccati Equation} \label{seção 2}

In this section, we collect some supplementary results on compact surfaces and Riccati equation solutions, as well as remarks on Anosov metrics.

\begin{prop}[\cite{eberlein1973geodesic}, Proposition 2.7]\label{lema da cota fora da vizinhança da assintota}
If $(M,g)$ is a compact surface, then for each $a>0$ there exists $C=C(K,a)>0$ such that any Riccati solution $U$ with an asymptote at $T$ we have that
$$
\|U(t)\| < C
\quad
\text{for all }t\notin (T-a,\,T+a).
$$
\end{prop}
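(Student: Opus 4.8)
The plan is to squeeze $U$ between the Riccati solutions of the two extremal constant curvatures, exploiting that on a compact surface the curvature is pinched: put $k_{0}=k_{0}(K):=\max_{M}|K|$, so that $|K(t)|\le k_{0}$ along every geodesic. If $k_{0}=0$ then $K\equiv0$, Jacobi fields are affine and $U(t)=1/(t-T)$, so $C=1/a$ works; assume from now on $k_{0}>0$. Write $U=f'/f$ with $f$ a Jacobi field along the geodesic vanishing at $T$. In the relevant situation $f$ vanishes only at $T$ (the Riccati solutions occurring in the paper come from Jacobi fields vanishing exactly once, so $T$ is the unique asymptote of $U$); then $U$ is defined on $(-\infty,T)\cup(T,\infty)$ and, replacing $f$ by $-f$ if necessary, $U(t)\to+\infty$ as $t\to T^{+}$ and $U(t)\to-\infty$ as $t\to T^{-}$.

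First I would bound $U$ on $(T,\infty)$. Let $\phi(t)=\sqrt{k_{0}}\coth(\sqrt{k_{0}}(t-T))$, the solution of $\phi'+\phi^{2}-k_{0}=0$; it is positive and strictly decreasing on $(T,\infty)$, with a pole at $T$. Comparing Laurent expansions at $T$, namely $U(t)=\tfrac{1}{t-T}-\tfrac{K(T)}{3}(t-T)+O((t-T)^{2})$ against $\phi(t)=\tfrac{1}{t-T}+\tfrac{k_{0}}{3}(t-T)+O((t-T)^{3})$, and using $K(T)\ge-k_{0}$, one gets $U(t)\le\phi(t)$ for $t$ just above $T$. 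Since $K(t)\ge-k_{0}$ along the geodesic, the standard Riccati comparison principle — for $w=\phi-U$ one has $w'+(\phi+U)w=k_{0}+K\ge0$, and the integrating-factor argument on each compact subinterval of $(T,\infty)$ keeps $w\ge0$ — propagates this to $U(t)\le\phi(t)$ for all $t>T$. As $\phi$ is decreasing, $U(t)\le\phi(T+a)=\sqrt{k_{0}}\coth(\sqrt{k_{0}}a)$ for every $t\ge T+a$. For the matching lower bound I would argue directly from the equation: $U'=-U^{2}-K\le k_{0}-U^{2}$, so $U'<0$ wherever $U<-\sqrt{k_{0}}$; hence if $U$ dipped below $-\sqrt{k_{0}}$ somewhere in $(T,\infty)$ it would stay below and strictly decrease thereafter, and comparison with $v'=k_{0}-v^{2}$ would force $U\to-\infty$ in finite time — a second asymptote, contradicting that $T$ is the only one. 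Thus $U(t)\ge-\sqrt{k_{0}}$ on $(T,\infty)$, and combining, $|U(t)|\le\sqrt{k_{0}}\coth(\sqrt{k_{0}}a)$ for all $t\ge T+a$.

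The half-line $(-\infty,T)$ follows by reflection: $\hat{U}(s):=-U(2T-s)$ solves a Riccati equation with curvature $K(2T-s)$, still bounded by $k_{0}$, with a positive pole at $T$, so the previous step applied to $\hat{U}$ gives $-\sqrt{k_{0}}\coth(\sqrt{k_{0}}a)\le U(t)\le\sqrt{k_{0}}$ for $t\le T-a$. Since $\coth\ge1$, setting $C=C(K,a):=\sqrt{k_{0}}\coth(\sqrt{k_{0}}a)$, which depends only on $k_{0}=\max_{M}|K|$ and on $a$, we get $\|U(t)\|\le C$ for all $t\notin(T-a,T+a)$; a strict inequality follows after enlarging $C$ by any fixed amount.

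The step I expect to be the real obstacle is the passage from the local comparison at the pole to the global one: one must control the subleading term of the Laurent expansion of $U$ at $T$ to know that $U$ stays below the constant-curvature barrier immediately past the asymptote, and the borderline case $K(T)=-k_{0}$ needs an extra argument (perturbing $k_{0}$ slightly upward, or examining the next Laurent coefficient). The second point, elementary once noticed but essential, is that it is precisely the curvature \emph{upper} bound that prevents $U$ from escaping to $-\infty$ on the far side of $T$; if one allows Riccati solutions with several asymptotes, the estimate only survives between consecutive ones, so the global statement genuinely relies on the solutions in question coming from Jacobi fields that vanish exactly once.
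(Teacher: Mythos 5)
Your proof is correct, and since the paper simply cites this from Eberlein without reproducing an argument, the standard constant-curvature comparison you give is exactly the right approach. You are also right that the statement as written in the paper is implicitly in the setting of no conjugate points: the hypothesis that the Jacobi field vanishes only at $T$ is essential (on the round sphere the Riccati solution $\cot t$ has asymptotes at every multiple of $\pi$, and the bound fails for small $a$), and Eberlein's Proposition~2.7 is indeed stated there for manifolds without conjugate points. Two small remarks on the execution. First, the Laurent-expansion matching at the pole, and the borderline worry when $K(T)=-k_0$, can both be avoided: take $g$ solving $g''-k_0 g=0$, $g(T)=0$, $g'(T)=f'(T)$, and set $W=f'g-fg'$; then $W(T)=0$ and $W'=-(K+k_0)fg\le 0$ for $t>T$ (since $f,g>0$ there), so $W\le 0$, i.e.\ $U=f'/f\le g'/g=\phi$ on all of $(T,\infty)$, with no case analysis. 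Second, even within your integrating-factor route, the borderline case resolves itself: both $U$ and $\phi$ have the same principal part $1/(t-T)$, so $w=\phi-U\to 0$ as $t\to T^+$, and since $w'+(\phi+U)w=k_0+K\ge 0$ with $\phi+U>0$ near the pole, $w$ cannot be negative on any interval $(T,T+\eta)$ (it would be increasing there yet tend to $0$ from above as $t\to T^+$, a contradiction). The reflection step and the lower bound $U\ge -\sqrt{k_0}$ via blow-up comparison are both fine.
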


Then, follows from the Proposition \ref{lema da cota fora da vizinhança da assintota},

\begin{lema}\label{lema da convergência da solução da equação de riccati não negativa}
Let $(M,g)$ be a compact surface and let $\gamma$ be a geodesic in $(M,g)$.

\begin{itemize}
  \item If there is a sequence of Riccati solutions $\{U_n\}$ along $\gamma$ each having an asymptote at $t_n$, satisfying $U_n(t)>0$ for all $t>t_n$, and with $t_n\to -\infty$, then there exists a global solution $U$ on $\mathbb{R}$ with $U(t)\ge0$ for all $t$.
  \item Similarly, if there is a sequence $\{V_n\}$ with asymptotes $s_n\to +\infty$ and $V_n(t)<0$ for all $t<s_n$, then there exists a global solution $V$ with $V(t)\le0$ for all $t$.
\end{itemize}
\end{lema}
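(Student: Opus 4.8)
### Proof proposal

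The plan is to prove the two bullet points of Lemma~\ref{lema da convergência da solução da equação de riccati não negativa} by a normal-families / diagonal-extraction argument, using Proposition~\ref{lema da cota fora da vizinhância da assintota} to get the uniform bounds that make the extraction possible. By symmetry (replacing $t$ by $-t$, which sends the Riccati equation to $W'(t) = W^2(t) + K(-t)$, i.e. changes the sign of $W$ and reverses orientation), it suffices to treat the first bullet; the second follows by applying the first to the reversed geodesic and negating.

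First I would fix a reference point, say $t=0$, and an arbitrary radius $a>0$. Since $t_n \to -\infty$, for all $n$ large enough we have $t_n < -a$, so the interval $[-a,a]$ is disjoint from $(t_n - a, t_n + a)$; Proposition~\ref{lema da cota fora da vizinhância da assintota} then gives a constant $C = C(K,a)$ with $|U_n(t)| \le C$ for all $t \in [-a,a]$ and all large $n$. In particular $\{U_n(0)\}$ is a bounded sequence of real numbers, so after passing to a subsequence $U_n(0) \to c$ for some $c \in \mathbb{R}$. Let $U$ be the solution of the Riccati equation $U'(t) + U^2(t) + K(t) = 0$ with $U(0) = c$. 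Continuous dependence of solutions of this ODE on initial conditions, together with the uniform bound on $[-a,a]$ which keeps the $U_n$ in a compact set where the vector field is Lipschitz, shows that $U_n \to U$ uniformly on $[-a,a]$ (one can also see this directly from Gronwall applied to $U_n - U$). Since $a$ was arbitrary and the bounds are consistent, a diagonal argument over $a = 1,2,3,\dots$ produces a single subsequence along which $U_n \to U$ uniformly on compact subsets of $\mathbb{R}$, and in particular $U$ is defined on all of $\mathbb{R}$ and has no asymptote.

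It remains to check the sign: $U(t) \ge 0$ for all $t$. Fix $t \in \mathbb{R}$. For every $n$ with $t_n < t$ we have by hypothesis $U_n(t) > 0$; since $t_n \to -\infty$ this holds for all sufficiently large $n$ in our subsequence, and passing to the limit gives $U(t) = \lim_n U_n(t) \ge 0$. This holds for every $t$, which is the claim. (Note we only get $\ge 0$ in the limit, not $>0$, which is exactly why the statement is phrased with a non-strict inequality.) The second bullet is then obtained, as indicated above, by the time-reversal substitution: if $\bar\gamma(t) = \gamma(-t)$ and $\bar U(t) = -U(-t)$, then $\bar U$ solves the Riccati equation along $\bar\gamma$, asymptotes at $s_n$ for $V_n$ correspond to asymptotes at $-s_n \to -\infty$ for the reversed solutions, and the sign condition $V_n(t) < 0$ for $t < s_n$ becomes exactly the positivity condition of the first bullet; applying the first bullet and reversing back yields the global $V$ with $V \le 0$.

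The only real subtlety — and the step I would be most careful about — is the passage from the pointwise bound of Proposition~\ref{lema da cota fora da vizinhância da assintota} to genuine compactness of $\{U_n\}$ on a fixed interval. Once $|U_n| \le C$ on $[-a,a]$, the Riccati equation itself bounds $|U_n'| \le C^2 + \|K\|_\infty$ there, so the $U_n$ are equi-Lipschitz and Arzelà–Ascoli applies; the uniform limit then automatically solves the ODE because the right-hand side is continuous. I would make sure the constant $C$ really is uniform in $n$ (it depends only on $K$ and $a$, not on the particular solution or the location of its asymptote, which is the content of the cited proposition), so that the diagonal extraction over growing $a$ is legitimate.
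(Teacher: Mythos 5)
Your proof is correct and follows essentially the same route as the paper: both use Proposition~\ref{lema da cota fora da vizinhança da assintota} to get a uniform bound on $\{U_n\}$ away from the receding asymptotes, bound $U_n'$ via the Riccati equation itself, extract a locally uniformly convergent subsequence (Arzel\`a--Ascoli, or equivalently your continuous-dependence/Gronwall variant followed by a diagonal argument), observe that the limit solves the equation and inherits the sign $\ge 0$ in the limit, and handle the second bullet by time reversal.
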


\begin{proof}
Let $\{U_n\}$ satisfy the first hypothesis. By Proposition~\ref{lema da cota fora da vizinhança da assintota}, for any fixed $a>0$ there is a uniform $C$ so that
$$
\|U_n(t)\|<C
\quad
\text{whenever }t\ge t_n+a.
$$
Since 
$$
U_n'=-U_n^2 - K,
$$
we also get a uniform bound on $\|U_n'(t)\|$ for $t\ge t_n+a$. Hence $\{U_n\}$ is equicontinuous and uniformly bounded on each compact interval of $(t_n+a,\infty)$.

By Arzelà–Ascoli, a subsequence converges uniformly on compacts to a continuous limit $U$ that satisfies the Riccati equation and $U(t)\ge0$ for all $t$.

The same argument holds for $\{V_n\}$ when reversing the time and inequality signs.
\end{proof}

Now, let us analyze the relation between Anosov metrics and the Riccati equation. We start with the standard definition and a classical characterization.

\begin{defi}
Let $(M,g)$ be a compact Riemannian manifold, $\phi_t$ its geodesic flow, and $G$ the geodesic vector field. We say $g$ is an \emph{Anosov metric} if there exist constants $C,\lambda>0$ such that, for every $\theta\in T_1M$, 
$$
T_{\theta}T_1M \;=\; E^s(\theta)\oplus E^u(\theta)\oplus \langle G(\theta)\rangle,
$$
and
$$
\|d\phi_t(\xi^s)\|\le C\,e^{-\lambda t}
\quad(\xi^s\in E^s,\;t\ge0),
\qquad
\|d\phi_t(\xi^u)\|\ge C\,e^{\lambda t}
\quad(\xi^u\in E^u,\;t\le0).
$$
\end{defi}

Anosov \cite{anosov1969geodesic} showed that any compact manifold of negative curvature admits an Anosov metric. Eberlein \cite{eberlein1973geodesic} then proved the following equivalent conditions:

\begin{teo}[\cite{eberlein1973geodesic}, Theorem 3.2]\label{teo equivalencia anosov e jacobi}
Let $M$ be a compact manifold without conjugate points. The following are equivalent:
\begin{enumerate}
  \item The geodesic flow on $T_1M$ is Anosov.
  \item $E^s(\theta)\cap E^u(\theta)=\{0\}$ for all $\theta\in T_1M$.
  \item $T_{\theta}T_1M = E^s(\theta)\oplus E^u(\theta)\oplus \langle G(\theta)\rangle$ for all $\theta$.
  \item Every nonzero Jacobi field along any geodesic is unbounded in norm.
\end{enumerate}
\end{teo}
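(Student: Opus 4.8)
The strategy is to reduce every assertion — using that $M$ has no conjugate points — to the behaviour of perpendicular Jacobi fields along geodesics and of the associated Riccati solutions, and then to prove the web of implications (1)$\Rightarrow$(2), (2)$\Leftrightarrow$(3), (2)$\Leftrightarrow$(4), (2)$\Rightarrow$(1). The equivalence (2)$\Leftrightarrow$(3) is a dimension count inside the subspace $O_{\theta}\subset T_{\theta}T_1M$: each Green subspace $E^{s}(\theta),E^{u}(\theta)$ is the graph over the horizontal space of the corresponding Riccati solution operator, hence isotropic of dimension $n-1$ and transverse to the vertical space, so $E^{s}(\theta)\cap E^{u}(\theta)=\{0\}$ is equivalent to $E^{s}(\theta)\oplus E^{u}(\theta)=O_{\theta}$, i.e. to the splitting in (3). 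For (2)$\Leftrightarrow$(4) I would first record the standard fact, valid in any compact manifold without conjugate points, that the space of perpendicular Jacobi fields along a fixed geodesic that are bounded on $[0,+\infty)$ is exactly $E^{s}$, and likewise $E^{u}$ on $(-\infty,0]$: the inclusion ``Green field $\Rightarrow$ bounded'' follows by writing Green fields as limits of Jacobi fields vanishing at $T\to\pm\infty$ and using Proposition~\ref{lema da cota fora da vizinhança da assintota} together with Lemma~\ref{lema da convergência da solução da equação de riccati não negativa} to keep the limiting Riccati solutions bounded, while the reverse inclusion is the dimension count for the space of bounded solutions, which genuinely uses compactness. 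Granting this, a nonzero perpendicular Jacobi field bounded on all of $\mathbb{R}$ is precisely a nonzero vector of $E^{s}(\theta)\cap E^{u}(\theta)$, which gives (2)$\Leftrightarrow$(4).

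If the geodesic flow is Anosov, a vector in the contracting bundle of the Anosov splitting generates a Jacobi field whose Sasaki norm decays exponentially, hence is bounded on $[0,+\infty)$; by the fact above that bundle is contained in $E^{s}$, and since both have dimension $n-1$ they coincide, symmetrically the expanding bundle equals $E^{u}$, and as the Anosov splitting is direct we get $E^{s}(\theta)\cap E^{u}(\theta)=\{0\}$. This proves (1)$\Rightarrow$(2).

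The heart of the matter is (2)$\Rightarrow$(1). First, the transversality promotes the Green bundles from semicontinuous to continuous in $\theta$ (their semicontinuity is automatic, and the dimension count does the rest). Writing $U^{s}_{\theta},U^{u}_{\theta}$ for the stable and unstable Riccati solutions along $\gamma_{\theta}$, the map $\theta\mapsto U^{u}_{\theta}(0)-U^{s}_{\theta}(0)$ is then continuous and, by (2), strictly positive on the compact space $T_1M$, so it is $\ge c$ for some $c>0$; by flow invariance $U^{u}_{\theta}(t)-U^{s}_{\theta}(t)=U^{u}_{\phi_{t}\theta}(0)-U^{s}_{\phi_{t}\theta}(0)\ge c$ for all $t$, and it is $\le 2B$ for a uniform $B$ by the bound on Green Riccati solutions in Proposition~\ref{lema da cota fora da vizinhança da assintota}. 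Taking the scalar stable and unstable Jacobi solutions $f^{s},f^{u}>0$ along $\gamma_{\theta}$, the Wronskian is constant, $f^{s}(t)f^{u}(t)\bigl(U^{u}_{\theta}(t)-U^{s}_{\theta}(t)\bigr)\equiv W\neq 0$, so $f^{s}f^{u}$ stays in $[\,|W|/2B,\ |W|/c\,]$, while $\frac{d}{dt}\log\bigl(f^{u}/f^{s}\bigr)=U^{u}_{\theta}-U^{s}_{\theta}\ge c$ gives $f^{u}(t)/f^{s}(t)\ge e^{ct}\,f^{u}(0)/f^{s}(0)$; multiplying the two estimates yields $f^{s}(t)\le C\,f^{s}(0)\,e^{-ct/2}$ and $f^{u}(t)\ge C^{-1}f^{u}(0)\,e^{ct/2}$ for $t\ge 0$, with $C$ uniform in $\theta$. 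Finally, since in the Sasaki metric $\|d\phi_{t}\xi\|^{2}=|J(t)|^{2}+|J'(t)|^{2}$ and $J'=U_{\theta}J$ with $U_{\theta}$ uniformly bounded, these norm estimates translate into $\|d\phi_{t}\xi^{s}\|\le C_{1}e^{-\lambda t}\|\xi^{s}\|$ and $\|d\phi_{t}\xi^{u}\|\ge C_{1}^{-1}e^{\lambda t}\|\xi^{u}\|$ with $\lambda=c/2$, which together with the splitting (3) is the Anosov property.

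The step I expect to be the main obstacle is (2)$\Rightarrow$(1): one must manufacture \emph{uniform} exponential hyperbolicity out of the a priori only pointwise, non-uniform transversality $E^{s}(\theta)\cap E^{u}(\theta)=\{0\}$, and the sole leverage is the compactness of $T_1M$ together with the flow invariance of the Green bundles — which is why establishing their continuity (itself forced by transversality) and exploiting the uniform bound of Proposition~\ref{lema da cota fora da vizinhança da assintota} carry the whole argument. A secondary, and likewise non-formal, ingredient is the fact used for (2)$\Leftrightarrow$(4) — that boundedness of a Jacobi field on a half-line characterizes membership in the corresponding Green subspace — which again rests on compactness rather than on soft arguments.
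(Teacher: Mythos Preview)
The paper does not prove this theorem: it is quoted verbatim from Eberlein \cite{eberlein1973geodesic} as background in Section~\ref{seção 2}, with no proof given, so there is no ``paper's own proof'' to compare your proposal against.

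That said, your sketch is essentially the architecture of Eberlein's original argument and is sound in outline. One point worth flagging: your proof of the key step (2)$\Rightarrow$(1) is written for the scalar case --- you take \emph{the} stable and unstable Jacobi solutions $f^{s},f^{u}>0$ and use the scalar Wronskian identity $f^{s}f^{u}(U^{u}-U^{s})\equiv W$. This is correct for surfaces (which is all the present paper needs), but the theorem as stated is for arbitrary compact manifolds without conjugate points, where the Riccati solutions are symmetric operators and the Wronskian argument has to be replaced by the corresponding matrix identity and an estimate on $\det$ or on the operator norm of the difference $U^{u}-U^{s}$; Eberlein handles this via a slightly different route, bounding the norm of Green Jacobi fields directly. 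Also, your claim that ``transversality promotes the Green bundles from semicontinuous to continuous'' is right but deserves a word of justification in higher dimensions: one uses that $E^{s}$ and $E^{u}$ are each limits of continuous Lagrangian bundles (the bundles of Jacobi fields vanishing at $\pm T$), hence individually closed, and the transversality plus dimension count forces the limit to be unique, hence continuous. None of this is a gap in the surface setting the paper actually uses.
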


In the case of surfaces, condition (2) is equivalent to the following:

\begin{lema}
Let $(M,g)$ be a compact surface without conjugate points. Then $g$ is Anosov if and only if 
$$
U^s(t) < U^u(t)
\quad
\text{for all }t\in\mathbb{R}.
$$
\end{lema}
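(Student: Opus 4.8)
The plan is to translate the dynamical condition (2) of Theorem~\ref{teo equivalencia anosov e jacobi}, namely $E^s(\theta)\cap E^u(\theta)=\{0\}$, into a statement about the stable and unstable Riccati solutions, using the fact that on a surface the orthogonal space $O_\theta$ is two-dimensional and each Green subspace is a line in it. First I would fix a geodesic $\gamma=\gamma_\theta$ and a unit parallel field $e(t)$ perpendicular to $\gamma'$, so that every perpendicular Jacobi field is $J(t)=f(t)e(t)$ with $f''+Kf=0$. The stable Green subspace $E^s(\theta)$ is the line spanned by $(f^s(0), (f^s)'(0))$ where $f^s$ is the scalar stable Jacobi function, and likewise $E^u(\theta)$ is spanned by $(f^u(0),(f^u)'(0))$; since these Jacobi functions never vanish (they generate entire Riccati solutions), we may normalize $f^s(0)=f^u(0)=1$, and then $U^s_\theta(0)=(f^s)'(0)$, $U^u_\theta(0)=(f^u)'(0)$. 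Hence $E^s(\theta)=E^u(\theta)$ if and only if the two vectors $(1,U^s_\theta(0))$ and $(1,U^u_\theta(0))$ are parallel, i.e. if and only if $U^s_\theta(0)=U^u_\theta(0)$.

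Next I would upgrade this from "at $t=0$" to "for all $t$" by invariance under the geodesic flow: applying $d\phi_t$ sends $\theta$ to $\phi_t(\theta)$ and the Green subspaces are flow-invariant, so $E^s(\phi_t\theta)\cap E^u(\phi_t\theta)=\{0\}$ for all $t$ is equivalent to $E^s(\theta)\cap E^u(\theta)=\{0\}$; and the stable/unstable Riccati solutions along $\gamma$ restricted to the shifted geodesic are exactly $U^s_\theta(t+\cdot)$ and $U^u_\theta(t+\cdot)$. Therefore the condition $E^s(\phi_t\theta)\ne E^u(\phi_t\theta)$ reads $U^s_\theta(t)\ne U^u_\theta(t)$. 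Running this over all $\theta\in T_1M$ and all $t\in\mathbb R$, condition (2) of Theorem~\ref{teo equivalencia anosov e jacobi} becomes: $U^s_\theta(t)\ne U^u_\theta(t)$ for every geodesic and every $t$.

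It remains to turn $\ne$ into the strict inequality $U^s(t)<U^u(t)$. Here I would invoke the comparison structure of the Riccati equation recorded earlier: if $U_1,U_2$ are two solutions of the same scalar Riccati equation $U'+U^2+K=0$ and $U_1(t_0)<U_2(t_0)$ at one point, then by uniqueness their graphs never cross, so $U_1<U_2$ on their common interval of definition — in particular on all of $\mathbb R$ for two entire solutions. The standard fact (used in the proof of Lemma~\ref{lema instável não negativo e estável não positivo}, and in Eberlein) is that the unstable solution is the \emph{maximal} entire solution and the stable one is the \emph{minimal} entire solution, so always $U^s(t)\le U^u(t)$; combined with the non-crossing property, either $U^s\equiv U^u$ or $U^s<U^u$ everywhere. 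Thus "$U^s(t)\ne U^u(t)$ for all $t$" is the same as "$U^s(t)<U^u(t)$ for all $t$", which finishes the equivalence in both directions.

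The step I expect to be the main obstacle is not the algebra but justifying cleanly the identification of $E^s$ and $E^u$ as the lines $(1,U^s_\theta(0))$ and $(1,U^u_\theta(0))$ in $O_\theta$ — one must be careful that the Green Jacobi functions are nowhere zero (so the normalization $f(0)=1$ is legitimate and the Riccati solution is genuinely entire), and that the vertical/horizontal splitting of $O_\theta$ under the Sasaki metric matches the coordinates $(J(0),J'(0))$ used in the definition of the Green bundles; once that dictionary is in place, everything else is a short comparison-theorem argument.
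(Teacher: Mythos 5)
The paper states this lemma without proof, merely presenting it as the two-dimensional restatement of condition (2) of Theorem~\ref{teo equivalencia anosov e jacobi}. Your argument supplies exactly the expected details and is correct: on a surface $O_\theta$ is two-dimensional, the Green bundles are the lines $\mathrm{span}(1,U^s_\theta(0))$ and $\mathrm{span}(1,U^u_\theta(0))$ because Green Jacobi fields never vanish without conjugate points, flow invariance converts ``for all $\theta$'' into ``for all $t$ along every geodesic,'' and the non-crossing property of solutions to the same scalar Riccati equation combined with the maximality of $U^u$ and minimality of $U^s$ sharpens $U^s\ne U^u$ to $U^s<U^u$. Since the paper gives no proof, there is nothing to compare routes against; your proof is a valid filling-in of the omitted argument, in the spirit clearly intended by the paper.
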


\section{Surfaces free of focal points and with Anosov metric}\label{seção 3}

Inspired by Gulliver’s surgery constructions in \cite{gulliver1975variety}, which produced compact Riemannian surfaces with regions of positive curvature whose geodesic flow is Anosov (with or without focal points according to the surgery), we now introduce a family of surfaces that, with the hypotheses of Theorem \ref{Teorema 1 introdução}, consists of Anosov metrics without focal points. 

\begin{defi}\label{definição da família}
  Consider $\delta, \epsilon, \Lambda \in \mathbb{R}$ such that $\delta > 0$, $\Lambda > 0$, and $0 < \epsilon < 1$, and consider $k \in \mathbb{N}$. We denote by $\mathcal{M}(\delta, k, \epsilon, \Lambda)$ the family of compact, orientable, smooth Riemannian surfaces of genus greater than $1$ such that, if $(M,g) \in \mathcal{M}(\delta, k, \epsilon, \Lambda)$, then:
\begin{enumerate}
\item There exist $k$ pairwise disjoint, simply connected, strongly convex open balls in $M$ that are free of focal points, denoted by $B_{\delta}(p_{1}),\cdots, B_{\delta}(p_{k})$. We shall refer to them as \textbf{generalized bubbles}.
\item Every point where the curvature is non-negative is contained in $\bigcup_{i=1}^{k}B_{\delta}(p_{i})$.
\item For all $ i,j = 1, \dots, k $ with $i\ne j$, the distance between $B_{\delta}(p_{i})$ and $B_{\delta}(p_{j})$, as well as the return time of a geodesic to $B_{\delta}(p_{i})$, is greater than $\Lambda$.
\item The curvature in $(\bigcup_{i=1}^{k}B_{\delta
}(p_{i}))^{c}$ is smaller than $-\epsilon$.

    \end{enumerate}
\end{defi}

For any parameters, a surface $M \in \mathcal{M}(\delta, k, \epsilon, \Lambda)$ exists. Volume, thus $\Lambda$, can be arbitrarily large by increasing genus. Positive curvature can be introduced into negative curvature surfaces via surgeries/perturbations.

\begin{obs}
Being strongly convex, simply connected, and free of focal points, generalized bubbles cannot contain entire geodesics; every geodesic intersecting a generalized bubble necessarily crosses it. Geodesic segments in a generalized bubble have a maximum length of $2\delta$. A longer segment would imply a shorter one connecting its ends, contradicting the absence of focal points. Finally, there cannot be closed geodesics in the generalized bubble, as the region is simply connected and has no focal points.
\end{obs}

\begin{obs}
Definition \ref{definição da família} (3) is equivalent to the distance between lifts of generalized bubbles in the universal covering being greater than $\Lambda$. More precisely, for $i=1,\dots,k$, if $\tilde{B_{\delta}}^{j}(p_{i})$ is a connected component of $B_{\delta}(p_{i})$'s lift, then assumption (3) means $\text{dist}(\tilde{B^{j}_{\delta}}(p_{i}), \tilde{B^{l}_{\delta}}(p_{m})) > \Lambda$ for $j \neq l$ or $i\neq m$.
\end{obs}

The key idea in the proof of Theorem~\ref{Teorema 1 introdução} is to control the dynamical behavior of geodesics inside and outside the generalized bubbles via solutions of the Riccati equation.  Fix a surface $(M,g)$ and denote by $K$ its sectional curvature and by $K^+ = \max_M K$ its maximum.

\begin{lema}\label{lema controlhe fora da bolha}
Let $(M,g)\in \mathcal{M}(\delta,k,\epsilon,\Lambda)$, and let $\gamma$ be a geodesic.  Let $
\bigl\{(a_m,b_m)\bigr\}_{m\in I\subset\mathbb{Z}}
$ be the collection of parameter intervals on which $\gamma(t)$ is in $\bigcup_{i=1}^k B_\delta(p_i)$.  Let $U(t)$ be any Riccati solution along $\gamma$. 
There exists $\Lambda(\epsilon)>0$ such that if $U(b_m)>0$ and $\Lambda\ge\Lambda(\epsilon)$ then
\begin{equation*}
U(t)>0\quad\forall\,t\in[b_m,a_{m+1}],\quad
U\bigl(a_{m+1}\bigr)>\sqrt{\epsilon}\,\Bigl(1-\tfrac{\epsilon}{2}\Bigr).
\end{equation*}
\end{lema}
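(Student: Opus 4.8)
The plan is to analyze the Riccati solution $U$ on the interval $[b_m, a_{m+1}]$, which by Definition~\ref{definição da família}(2) lies entirely outside the generalized bubbles, so that $K(t) < -\epsilon$ there. On this interval we compare $U$ with the constant-curvature model $-\epsilon$. Precisely, I would set $V$ to be the solution of the Riccati equation $V' + V^2 - \epsilon = 0$ that has an asymptote slightly to the left of $b_m$, i.e. the solution behaving like $\sqrt{\epsilon}\coth(\sqrt{\epsilon}(t - c))$ with $c < b_m$; since $U(b_m) > 0$ we may choose $c$ so that $V(b_m) \le U(b_m)$. By the Sturm-type comparison for Riccati equations (since $K(t) < -\epsilon$ on $[b_m, a_{m+1}]$, exactly as used in Lemma~\ref{lema instável não negativo e estável não positivo} and Lemma~\ref{proposição da prova que existe vizinhanças sem pontos focais}), one gets $U(t) \ge V(t)$ for all $t \in [b_m, a_{m+1}]$. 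Because $V$ stays positive (it is a $\coth$-type solution, decreasing toward the asymptotic value $\sqrt{\epsilon}$ from above, and never vanishing for $t > c$), this already yields $U(t) > 0$ on the whole interval.

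For the quantitative lower bound at $a_{m+1}$, the point is that the length of $[b_m, a_{m+1}]$ — the return time of $\gamma$ to the union of bubbles after exiting at $b_m$ — is at least $\Lambda$ by Definition~\ref{definição da família}(3). The model solution $V(t) = \sqrt{\epsilon}\coth(\sqrt{\epsilon}(t-c))$ satisfies $V(t) \to \sqrt{\epsilon}$ as $t \to +\infty$, and more usefully $V(t) \ge \sqrt{\epsilon}\coth(\sqrt{\epsilon}\,\ell)$ once $t - c \ge \ell$. Since $U(b_m) > 0$ forces $c$ to lie in $(-\infty, b_m)$ but possibly very close to $b_m$, the worst case is $c \to b_m^-$, giving $a_{m+1} - c \ge a_{m+1} - b_m \ge \Lambda$; hence $U(a_{m+1}) \ge V(a_{m+1}) \ge \sqrt{\epsilon}\coth(\sqrt{\epsilon}\,\Lambda)$. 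It then suffices to choose $\Lambda(\epsilon)$ large enough that $\coth(\sqrt{\epsilon}\,\Lambda) \ge 1 - \tfrac{\epsilon}{2}$ — wait, $\coth \ge 1$ always, so in fact I want the reverse: I should track the bound so that the asymptotic value $\sqrt\epsilon$ is approached from \emph{above}, and a cleaner route is to instead bound $U$ below by the solution with asymptote to the \emph{right}. Concretely, compare $U$ on $[b_m,a_{m+1}]$ from below with $W(t) = \sqrt{\epsilon}\tanh(\sqrt{\epsilon}(t - d))$ where $d$ is chosen so that $W(b_m) \le U(b_m)$ (possible since $U(b_m)>0$ and $\tanh$ ranges through all of $(-\sqrt\epsilon,\sqrt\epsilon)$, taking $d \ge b_m$ close to $b_m$); then $W$ is increasing, stays below $\sqrt\epsilon$, and $W(a_{m+1}) = \sqrt{\epsilon}\tanh(\sqrt{\epsilon}(a_{m+1}-d)) \ge \sqrt{\epsilon}\tanh(\sqrt{\epsilon}(\Lambda - \text{(small)}))$. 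Choosing $\Lambda(\epsilon)$ so that $\tanh(\sqrt{\epsilon}\,\Lambda) \ge 1 - \tfrac{\epsilon}{2}$, i.e. $\Lambda(\epsilon) = \tfrac{1}{\sqrt\epsilon}\operatorname{artanh}(1-\tfrac\epsilon2)$ (matching the constant in Theorem~\ref{Teorema 3 introdução}), gives the claimed $U(a_{m+1}) > \sqrt{\epsilon}(1-\tfrac\epsilon2)$, after absorbing the "small" loss — which requires a brief argument that the loss can be made negligible or is already accounted for.

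The main obstacle I anticipate is handling the "initial slack": because $U(b_m)$ is only known to be positive, not bounded below, the comparison model $W$ can start arbitrarily close to $0$, i.e. $d$ can be arbitrarily close to $b_m$, so one does not get the full length $\Lambda$ worth of growth. The fix is either (i) to note that one actually only needs $a_{m+1} - d \ge $ something slightly less than $\Lambda$ and choose $\Lambda(\epsilon)$ with a safety margin, or (ii) more carefully, observe that $d \le b_m$ is in fact forced once one feeds in information from the previous bubble crossing (an inductive setup over $m$), so that $d$ is uniformly controlled. I would go with route (i): fix a small $\eta>0$, note $W(b_m)\le U(b_m)$ can always be arranged with $d \le b_m + \eta_0$ for a universal $\eta_0$ depending only on how close to $0$ we insist — actually the cleanest statement is $d < b_m$ is \emph{not} guaranteed, so instead bound $a_{m+1} - d \ge \Lambda - (d - b_m)$ and control $d-b_m$ by the fact that $W$ must stay below the true solution $U$ which is itself bounded above on $[b_m, a_{m+1}]$ away from the bubbles by Proposition~\ref{lema da cota fora da vizinhança da assintota}-type estimates; this caps how fast $W$ can "catch up," hence caps $d - b_m$. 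Threading this quantitative bookkeeping so that the final constant comes out as exactly $\sqrt{\epsilon}(1-\tfrac\epsilon2)$ with $\Lambda(\epsilon)$ as in the theorem statement is the delicate part; everything else is a routine Riccati comparison.
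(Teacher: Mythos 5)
Your comparison-with-constant-curvature approach is exactly the paper's, but your worry about the ``initial slack'' stems from a sign confusion that makes the anticipated obstacle disappear. You write that to arrange $W(b_m)\le U(b_m)$ you would take ``$d\ge b_m$ close to $b_m$,'' and then spend a paragraph controlling the ``loss'' $d-b_m\ge 0$. But if you simply \emph{match} $W(b_m)=U(b_m)$ (which is possible whenever $0<U(b_m)<\sqrt{\epsilon}$), the condition $\sqrt{\epsilon}\tanh\bigl(\sqrt{\epsilon}(b_m-d)\bigr)=U(b_m)>0$ forces $b_m-d>0$, i.e.\ $d<b_m$. Hence $a_{m+1}-d=(a_{m+1}-b_m)+(b_m-d)>\Lambda$, and the $\tanh$ argument concludes immediately with $U(a_{m+1})\ge W(a_{m+1})>\sqrt{\epsilon}\tanh(\sqrt{\epsilon}\Lambda)\ge\sqrt{\epsilon}(1-\tfrac{\epsilon}{2})$ once $\Lambda\ge\tfrac{1}{\sqrt{\epsilon}}\artanh(1-\tfrac{\epsilon}{2})$. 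There is no ``catch-up'' to control, no need for Proposition~\ref{lema da cota fora da vizinhança da assintota}, and no inductive setup over $m$. (The remaining case $U(b_m)\ge\sqrt{\epsilon}$ is trivial: comparison with the constant solution $\sqrt{\epsilon}$ keeps $U(t)\ge\sqrt{\epsilon}>\sqrt{\epsilon}(1-\tfrac{\epsilon}{2})$ for all $t\ge b_m$, with no constraint on $\Lambda$.)

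For the record, the paper's proof does precisely this: it sets $V(b_m)=U(b_m)$, derives $U>V$ on $[b_m,a_{m+1}]$ from $K<-\epsilon$, and then reads off the lower bound from the explicit one-parameter family of solutions to $V'+V^2-\epsilon=0$; the explicit $\Lambda(\epsilon)=\tfrac{1}{\sqrt{\epsilon}}\artanh(1-\tfrac{\epsilon}{2})$ is recorded in Remark~\ref{Obs tamanho de Lambda}. So your instinct was correct; you just needed to notice that $U(b_m)>0$ already pins down the side of $b_m$ on which the shift $d$ lies.
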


\begin{proof}
First, by the negativity of the curvature outside the generalized bubble, $U(b_m)>0$ implies 
$$
U(t)>0
\quad\forall\,t\in[b_m,a_{m+1}].
$$

Now, compare $U(t)$ with the solution $V(t)$ of the scalar Riccati equation
\begin{align}
U'(t) + U(t)^2 + K(t) &= 0,\label{eq:U_riccati}\\
V'(t) + V(t)^2 - \epsilon &= 0,\label{eq:V_riccati}
\end{align}
with initial condition $V(b_m)=U(b_m)>0$.  Since $K(t)\le -\epsilon$ on the complement of the generalized bubbles, subtracting \eqref{eq:U_riccati} from \eqref{eq:V_riccati} at $t=b_m$ gives
$$
U'(b_m)-V'(b_m)
= -K(b_m)-\epsilon > 0,
$$
so $U(t)>V(t)$ for all $t\in[b_m,a_{m+1}]$.

The general solutions of \eqref{eq:V_riccati} are
$$
V(t)
=\frac{\sqrt{\epsilon}\,e^{2\sqrt{\epsilon}t} + \sqrt{\epsilon}\,C}
     {e^{2\sqrt{\epsilon}t} - C},
$$
where $C$ is determined by $V(b_m)$.  Since $V(b_m)>0$, $V(t)>\sqrt{\epsilon}$ for all $t>b_m$ or $\lim_{t\to\infty}V(t)=\sqrt{\epsilon}$. Therefore, there exists $\Lambda(\epsilon)>0$ such that if $a_{m+1}-b_m\ge\Lambda(\epsilon)$ then
$$
V(a_{m+1})>\sqrt{\epsilon}\Bigl(1-\frac{\epsilon}{2}\Bigr).
$$
Since $U(a_{m+1})>V(a_{m+1})$, the same lower bound holds for $U(a_{m+1})$.
\end{proof}

\begin{figure}[H]
    \centering
    \includegraphics[scale=0.42]{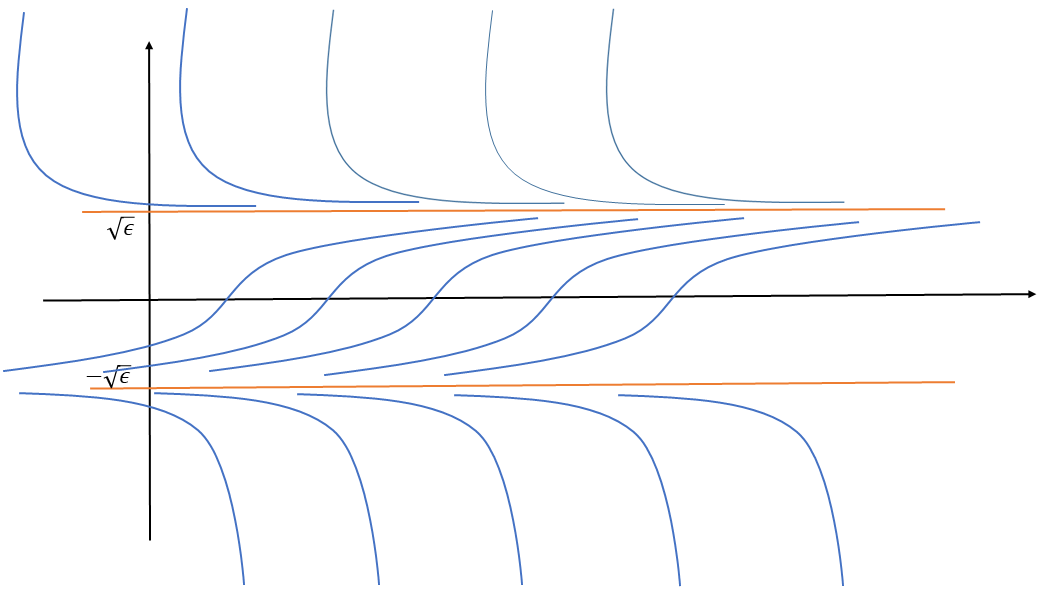}
    \caption{Solutions of the Riccati equation with constant curvature $-\epsilon$.}
\end{figure}

\begin{figure}[H]\label{figura da solução da equação de Riccati fora da bolha}
    \centering
    \includegraphics[scale=0.5]{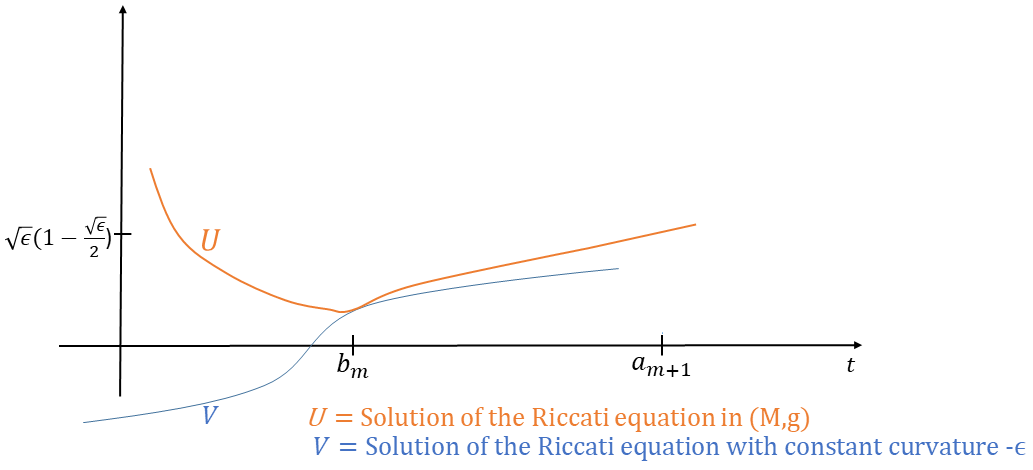}
    \caption{Control of the Riccati solution outside the generalized bubble.}
\end{figure}

\begin{lema}\label{lema sai positivo da bolha e chega grande na próxima}

Consider $(M,g) \in \mathcal{M}(\delta, k, \epsilon, \Lambda)$. Let $\gamma$ be a geodesic and $\{(a_{m},b_{m})\}_{m\in I\subset\mathbb{Z}}$ its intersections with $\bigcup_{i=1}^{k}B_{\delta}(p_{i})$.

There exist $\Lambda(\epsilon) > 0$ such that if $\Lambda \ge \Lambda(\epsilon)$ and
$$K^{+} < \frac{\sqrt{\epsilon}(1-\frac{\epsilon}{2})}{2\delta} - \epsilon(1-\frac{\epsilon}{2}) ^{2},$$
then for all $m$, a Riccati solution $U(t)$ exists on $\gamma$ with $U(t)\ge0$ for $t\in[a_{m},\infty)$. Moreover, any Riccati solution $\ge \sqrt{\epsilon}\big(1-\frac{\epsilon}{2}\big)$ on a generalized bubble is positive on that bubble.

Furthermore, if
$$K^{+} < \frac{\sqrt{\epsilon}\left ( 1-\frac{\epsilon}{2}\right)-\epsilon\left(2\delta+1\right)\left(1 -\frac{\epsilon}{2}\right)^{2}}{2\delta},$$
there exist $U(t)\ge\epsilon(1-\frac{\epsilon}{2})^{2}$ for all $t\in[a_{m},\infty)$.
\end{lema}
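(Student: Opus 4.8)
The plan is to iterate Lemma~\ref{lema controlhe fora da bolha} together with a single crossing estimate through a generalized bubble, and then invoke Lemma~\ref{lema da convergência da solução da equação de riccati não negativa} to produce the global nonnegative solution. First I would fix $m$ and consider, for each large $n$, the Riccati solution $U_n$ along $\gamma$ having a vertical asymptote at some parameter $t_n \le a_m$ with $t_n \to -\infty$ and $U_n(t) > 0$ for $t > t_n$ (such solutions exist by the standard Sturm comparison argument, since the curvature is bounded — cf. the proof of Lemma~\ref{lema instável não negativo e estável não positivo}). Fixing $n$ large enough that $t_n < a_{m-1}$, say, the solution $U_n$ enters the bubble interval $(a_m,b_m)$ with $U_n(a_m) > 0$; the point is to control how much it can decrease while crossing the bubble, whose geodesic length is at most $2\delta$. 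On $(a_m,b_m)$ we have $U_n' = -U_n^2 - K \ge -U_n^2 - K^+$, and comparing with the solution of $y' = -y^2 - K^+$ (or simply using $U_n' \ge -K^+$ once $U_n$ is small, together with $U_n'\ge -U_n^2-K^+$), one gets a lower bound of the form $U_n(b_m) \ge U_n(a_m) - (\text{something linear in }\delta\text{ and }K^+)$. Combining this with Lemma~\ref{lema controlhe fora da bolha}, which guarantees $U_n(a_m) > \sqrt{\epsilon}(1-\tfrac{\epsilon}{2})$ provided $\Lambda \ge \Lambda(\epsilon)$ and the previous exit value was positive, the hypothesis $K^+ < \frac{\sqrt{\epsilon}(1-\frac{\epsilon}{2})}{2\delta} - \epsilon(1-\frac{\epsilon}{2})^2$ is exactly what is needed to force $U_n(b_m) > 0$: the term $\frac{\sqrt{\epsilon}(1-\frac\epsilon2)}{2\delta}$ absorbs the linear-in-$\delta$ loss and the $\epsilon(1-\frac\epsilon2)^2$ term absorbs the quadratic loss $U_n^2 \le (\sqrt\epsilon(1-\frac\epsilon2))^2$ along the crossing.

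The key inductive step is then: if $U_n(b_{m-1}) > 0$, then $U_n(a_m) > \sqrt{\epsilon}(1-\tfrac\epsilon2)$ by Lemma~\ref{lema controlhe fora da bolha}, and hence $U_n(b_m) > 0$ by the crossing estimate above; moreover $U_n$ stays positive on all of $[b_{m-1},a_m]$ and on the bubble $[a_m,b_m]$ (for the latter, once it has been shown it does not reach $0$). Since for $n$ large the asymptote $t_n$ precedes infinitely many bubble intervals lying to the left of $[a_m,b_m]$, we can start the induction arbitrarily far back — concretely, on the interval just after $t_n$ the curvature is negative (the asymptote cannot occur inside a bubble, as bubbles are free of focal points and geodesic segments there have length $\le 2\delta$ while asymptotic Riccati blowup requires the comparison argument to run), so $U_n > 0$ there, and then each subsequent bubble is crossed with the sign preserved. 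Therefore $U_n(t) > 0$ for all $t \in [a_m, \infty)$, for every sufficiently large $n$. Applying the first bullet of Lemma~\ref{lema da convergência da solução da equação de riccati não negativa} to the sequence $\{U_n\}$ (asymptotes $t_n \to -\infty$, positive to the right of the asymptote) yields a global Riccati solution $U$ along $\gamma$ with $U(t) \ge 0$ for all $t$; restricting attention to $[a_m,\infty)$ gives the first assertion. The second sentence of that paragraph — any Riccati solution $\ge \sqrt\epsilon(1-\tfrac\epsilon2)$ at the entrance of a bubble stays positive across it — is precisely the crossing estimate isolated above, now stated without reference to the approximating sequence.

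For the final, sharper statement, I would redo the crossing estimate tracking the exit value more carefully: entering with $U(a_m) > \sqrt\epsilon(1-\tfrac\epsilon2)$, the loss across a segment of length $\le 2\delta$ is at most $2\delta(K^+ + \max U^2)$, and on the bubble $U$ is bounded above by a constant comparable to $\sqrt{K^+}$ (or one controls $\max U$ on the bubble by the same comparison with constant curvature $K^+$), so that $U(b_m) \ge \sqrt\epsilon(1-\tfrac\epsilon2) - 2\delta K^+ - 2\delta\,\epsilon(1-\tfrac\epsilon2)^2$; then Lemma~\ref{lema controlhe fora da bolha} applied on $[b_m,a_{m+1}]$ restores the value to $> \sqrt\epsilon(1-\tfrac\epsilon2)$ at the next entrance, and the stronger hypothesis $K^+ < \frac{\sqrt\epsilon(1-\frac\epsilon2) - \epsilon(2\delta+1)(1-\frac\epsilon2)^2}{2\delta}$ is exactly the inequality ensuring $U(b_m) \ge \epsilon(1-\tfrac\epsilon2)^2$ — rearranging, $2\delta K^+ + \epsilon(2\delta+1)(1-\tfrac\epsilon2)^2 < \sqrt\epsilon(1-\tfrac\epsilon2)$, i.e. $2\delta K^+ + 2\delta\,\epsilon(1-\tfrac\epsilon2)^2 < \sqrt\epsilon(1-\tfrac\epsilon2) - \epsilon(1-\tfrac\epsilon2)^2$, which is the claimed lower bound on $U(b_m)$ after the crossing. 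Since outside the bubbles $U' = -U^2 - K \ge -U^2 + \epsilon \ge 0$ whenever $0 \le U \le \sqrt\epsilon$, once $U \ge \epsilon(1-\tfrac\epsilon2)^2$ at a bubble exit it cannot decrease below that threshold before the next bubble (it only increases, or stays below $\sqrt\epsilon$; in any case the comparison with the constant-curvature-$(-\epsilon)$ solution of Lemma~\ref{lema controlhe fora da bolha} keeps it above $\sqrt\epsilon(1-\tfrac\epsilon2) > \epsilon(1-\tfrac\epsilon2)^2$ at the next entrance), so $U(t) \ge \epsilon(1-\tfrac\epsilon2)^2$ persists on all of $[a_m,\infty)$.

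I expect the main obstacle to be making the bubble-crossing estimate rigorous with the correct constants: inside a generalized bubble the curvature can be positive and only bounded by $K^+$, so $U$ could in principle blow up, and one must argue that under the stated smallness of $K^+$ (which makes $\sqrt{K^+}\,\delta$ small) the solution stays in a regime where $U$ is bounded by a constant of order $\sqrt\epsilon$ and the total variation over length $\le 2\delta$ is genuinely bounded by $2\delta(K^+ + \sup U^2)$ with $\sup U^2 \le \epsilon(1-\tfrac\epsilon2)^2$ — i.e. a bootstrap/continuity argument on the bubble interval. The interplay between $\Lambda(\epsilon)$ from Lemma~\ref{lema controlhe fora da bolha} (needed so the negative-curvature stretches recharge $U$ up to $\sqrt\epsilon(1-\tfrac\epsilon2)$) and the return-time hypothesis (3) in Definition~\ref{definição da família} (so that \emph{every} excursion between consecutive bubbles, including returns to the same bubble, is long enough) is the other point requiring care.
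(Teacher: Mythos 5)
Your plan reproduces the paper's core mechanism — alternate the outside-bubble recharge from Lemma~\ref{lema controlhe fora da bolha} with an inside-bubble loss estimate of the form ``decrease $\le 2\delta(K^+ + \sup U^2)$,'' and choose $K^+$ small enough that the loss never drives $U$ to zero. The arithmetic you give for why the two stated thresholds on $K^+$ are exactly the right ones is correct. However, the proposal departs from the paper in two ways worth flagging, and it leaves the one genuinely delicate step unresolved.

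First, you build in the asymptote sequence $U_n$, $t_n\to-\infty$, and Lemma~\ref{lema da convergência da solução da equação de riccati não negativa} right from the start. The paper reserves that machinery for the proof of Theorem~\ref{Teorema 1 introdução}; the lemma itself only asserts the existence, for each $m$, of a Riccati solution that is nonnegative on $[a_m,\infty)$, and the paper proves this more directly by fixing any $U$ with $U(b_{m-1})>0$ and propagating positivity forward. Constructing a globally defined nonnegative $U$ at this stage is stronger than what is claimed and not needed. Relatedly, your claim that ``the asymptote cannot occur inside a bubble'' is false: an asymptote of a Riccati solution is just a zero of the underlying Jacobi field, and Jacobi fields certainly can vanish inside a generalized bubble. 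The absence of focal points in the bubble controls the behavior of $J$ after such a zero, not whether the zero occurs there. Fortunately this claim is not load-bearing — you could simply take the asymptotes at bubble-exit times, as the paper does in the proof of Theorem~\ref{Teorema 1 introdução}.

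Second, and more substantively, you correctly flag the need to justify $\sup_{[a_m,b_m]} U^2 \le \epsilon(1-\tfrac{\epsilon}{2})^2$ when integrating across the bubble — since $U(a_m)$ is only bounded \emph{below} by $\sqrt{\epsilon}(1-\tfrac{\epsilon}{2})$ and could a priori be large — but you leave this as ``the main obstacle'' and sketch only a vague bootstrap. This is precisely the step the paper resolves with a concrete device: assume WLOG there is a $c\in(a_m,b_m)$ with $U(c)=\sqrt{\epsilon}(1-\tfrac{\epsilon}{2})$ (if not, $U$ stays above that level and there is nothing to prove), take $c$ to be the last such time before a putative first crossing of $0$ (or of $\epsilon(1-\tfrac{\epsilon}{2})^2$ for the sharper claim), and then the integration is performed from $c$ over an interval on which $U$ is \emph{by construction} trapped between the target threshold and $\sqrt{\epsilon}(1-\tfrac{\epsilon}{2})$. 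That trapping is exactly what licenses replacing $\sup U^2$ by $\epsilon(1-\tfrac{\epsilon}{2})^2$ and gives a contradiction. Without spelling out this choice of $c$ (or an equivalent first-crossing argument), the crossing estimate as you wrote it is not justified, since nothing prevents $U$ from being large on part of the bubble and thereby losing much more than $2\delta(K^+ + \epsilon(1-\tfrac{\epsilon}{2})^2)$.
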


\begin{proof}

By Lemma \ref{lema controlhe fora da bolha}, for each $\epsilon$, there exists $\Lambda(\epsilon)$ such that if $\Lambda \ge \Lambda(\epsilon)$, any positive Riccati solution when a geodesic leaves a generalized bubble remains greater than $\sqrt{\epsilon}(1-\frac{\epsilon}{2})$ when entering another. Furthermore, $b_m-a_m < 2\delta$ for all $m$.

Fix $m$ and let $U$ be a Riccati solution with $U(b_{m-1})>0$. Then $U(a_{m})>\sqrt{\epsilon}(1-\frac{\epsilon}{2})$. Assume, without loss of generality, there exists $c\in(a_{m}, b_{m})$ such that $U(c)=\sqrt{\epsilon}(1-\frac{\epsilon}{2})$.

Considering the Riccati equation $U' + U^2 + K = 0$, we have $U' = -U^2 - K$. Integrating from $c$ to $t$:
\begin{align*}
U(t) &= U(c) - \int_{c}^{t} (U^2(s) + K(s)) \, ds \\
& \ge U(c) - \int_{c}^{t} (\max_{[c,b_{m}]} U(s))^2 \, ds - \int_{c}^{t} (\max_{[c,b_{m}]} K(s)) \, ds \\ 
&\ge \sqrt{\epsilon}(1-\frac{\epsilon}{2}) - \int_{c}^{t} (\epsilon(1-\frac{\epsilon}{2})^{2} + K^{+}) \, ds \\
&= \sqrt{\epsilon}(1-\frac{\epsilon}{2}) - (\epsilon(1-\frac{\epsilon}{2})^{2} + K^{+}) (t - c).
\end{align*}
If 
$$
K^{+} < \frac{\sqrt{\epsilon}\left(1 - \frac{\epsilon}{2}\right)}{2\delta} - \epsilon\left(1 - \frac{\epsilon}{2}\right)^2,
$$ 
then $U(t) > 0$ for all $t \in [a_m, b_m]$, and by induction, $U(t) \ge 0$ for all $t \ge a_m$.

Moreover, if
$$
K^{+} < \frac{ \sqrt{\epsilon}\left(1 - \frac{\epsilon}{2}\right) - \epsilon(2\delta + 1)\left(1 - \frac{\epsilon}{2}\right)^2 }{2\delta},
$$
then $U(t) > \epsilon\left(1 - \frac{\epsilon}{2}\right)^2$ on $[a_m, b_m]$, and thus $U(t) \ge \epsilon\left(1 - \frac{\epsilon}{2}\right)^2$ for all $t \ge a_m$.
\end{proof}

\begin{figure}[H]
    \centering
    \includegraphics[scale=0.42]{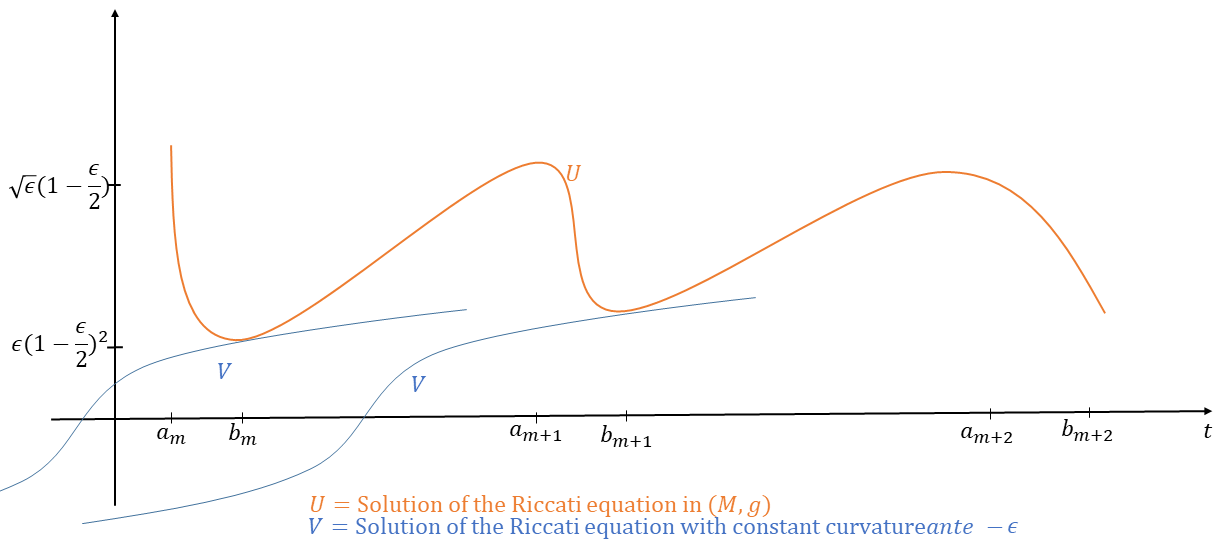}
    \caption{Control of Riccati solution inside and outside the generalized bubble.}
   
    \end{figure}

Now, based on the results shown, we have completed the proof of Theorem~\ref{Teorema 1 introdução}.

\begin{proof}
We prove Theorem~\ref{Teorema 1 introdução}.

Let $\Lambda$ be as in Lemma \ref{lema controlhe fora da bolha}, i.e., $\Lambda > 0$ such that if
$$
V'(t) + V^2(t) - \epsilon = 0 \quad \text{and} \quad V(0) > 0,
$$
then $V(\Lambda) > \sqrt{\epsilon}\left(1 - \frac{\epsilon}{2}\right)$. 

Let $\gamma$ be a geodesic. If $\gamma$ does not intersect a generalized bubble, i.e., it remains entirely in the region of negative curvature, then there exists a solution to the Riccati equation that never vanishes along $\gamma$. The same holds if $\gamma$ is tangent to a bubble.

By construction, any geodesic intersecting a generalized bubble must have defined entry and exit times. Let $\{(a_m, b_m)\}_{m \in I}$ be the sequence of intervals where $\gamma$ intersects the bubbles. We know that $b_m - a_m < 2\delta$. 

Let $J$ be a nontrivial Jacobi field along $\gamma$ with $J(b_1) = 0$, and let $U$ be the associated Riccati solution. Then $U(t) > 0$ for $t \in (b_1, a_2)$, and by Lemma~\ref{lema sai positivo da bolha e chega grande na próxima}, we have
$$
U(t) > \epsilon\left(1 - \frac{\epsilon}{2}\right)^2 \quad \text{for all } t \in (b_1,+\infty).
$$

This is sufficient to construct a global non-negative solution to the Riccati equation. In particular, for any $t_n \in \mathbb{R}$ and $n \in \mathbb{N}$, there exists a solution $U_n$ such that
$$
U_n(t) > \epsilon\left(1 - \frac{\epsilon}{2}\right)^2 \quad \text{for all } t \in (t_n,+\infty).
$$

Let $U_n$ be the solution with asymptote at $t_n$. Then, by Lemma~\ref{lema da convergência da solução da equação de riccati não negativa}, there exists a solution $U$ such that $U(t) \ge 0$ for all $t \in \mathbb{R}$. Since the unstable solution is the supremum of such solutions, it follows that the unstable Riccati solution is non-negative. 

A similar argument using Lemma~\ref{lema da convergência da solução da equação de riccati não negativa} can be made to show that the stable solution of the Riccati equation is non-positive. Therefore, as is true for all geodesics, by Lemma \ref{lema instável não negativo e estável não positivo}, the surface has no focal points.

In particular, if
$$
K^+ < \frac{\sqrt{\epsilon}\left(1 - \frac{\epsilon}{2}\right) - \epsilon(2\delta + 1)\left(1 - \frac{\epsilon}{2}\right)^2}{2\delta},
$$
then along any geodesic there is a solution of the Riccati equation defined for all $t$ with lower bound $\epsilon(1-\frac{\epsilon}{2})^{2}$ and a solution of the Riccati equation defined for all $t$ with upper bound $-\epsilon(1-\frac{\epsilon}{2})^{2}$. The argument is analogous to that in Lemma~\ref{lema da convergência da solução da equação de riccati não negativa}, but now using the modulus of the solutions. 

Let us now prove that $g$ is an Anosov metric.

Since $g$ has no conjugate points, stable and unstable Jacobi fields exist globally. By Theorem~\ref{teo equivalencia anosov e jacobi}, it is sufficient to show that no Jacobi field is simultaneously stable and unstable. This would only be possible if the corresponding geodesic entered a region of non-negative curvature. 

However, by our assumptions, along any geodesic the solution of the unstable Riccati equation is positive and the solution of the stable Riccati equation is negative. This implies that these fields are not generated by the same Jacobi field, i.e., there is no stable and unstable Jacobi field simultaneously. And this proves the Theorem.
\end{proof}

We conclude this section with an observation regarding the choice of the parameter $\Lambda$.

\begin{obs}\label{Obs tamanho de Lambda}
Although we do not provide an explicit value for $\Lambda$, we can estimate its lower bound in terms of $\epsilon$. Indeed, consider the solution to the Riccati equation on a compact surface of constant curvature $-\epsilon$ that vanishes at $t=0$. This solution is given by
$$\sqrt{\epsilon}\tanh(\sqrt{\epsilon}t).$$

Now, let $\gamma:[0,\Lambda] \to M$ be a geodesic segment in $(M,g)$ that is outside the generalized bubbles. If $U$ is a solution to the Riccati equation in $\gamma$ with $U(0) \ge 0$, then
$$U(t) \ge \sqrt{\epsilon}\tanh(\sqrt{\epsilon}t) \quad \text{for all } t \in [0,\Lambda].$$

Therefore, to guarantee that $U(\Lambda) \ge \sqrt{\epsilon}(1 - \frac{\epsilon}{2})$, it is enough to require
$$\sqrt{\epsilon}(1 - \tfrac{\epsilon}{2}) \le \sqrt{\epsilon}\tanh(\sqrt{\epsilon}\Lambda),$$
which is equivalent to
$$\artanh(1 - \tfrac{\epsilon}{2}) \le \sqrt{\epsilon}\Lambda.$$

Hence, we may define $\Lambda$ in Theorem~\ref{Teorema 1 introdução} as
$$\Lambda := \frac{1}{\sqrt{\epsilon}} \artanh\left(1 - \frac{\epsilon}{2}\right).$$
\end{obs}

\section{One parameter conformal deformations of metrics}\label{seção 4}

In this section, we present a metric deformation method and analyze how this deformation influences the estimates on geodesics in the deformed metrics.

We begin by recalling a classical result concerning flows, which is standard in the literature (see, for example, \cite{chang2005conformal}).

\begin{teo}\label{teorema fórmula da curvatura pelo laplaciano}
   Let $(M,g)$ be a smooth compact Riemannian surface. Consider $w \in C^{\infty}(M)$ and $g_{w} = e^{2w} g$. Then
$$-\triangle_{g} w + K_{g} = K_{g_{w}} e^{2w},$$
where $K_{g}$ and $K_{g_{w}}$ are the sectional curvatures of $g$ and $g_{w}$, respectively.
\end{teo}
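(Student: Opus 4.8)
The plan is to prove the conformal curvature formula $-\triangle_{g} w + K_{g} = K_{g_{w}} e^{2w}$ by a direct local computation in isothermal coordinates, which exist on any smooth surface. First I would choose local isothermal coordinates $(x,y)$ for $g$ so that $g = e^{2\varphi}(dx^{2}+dy^{2})$ for some smooth local function $\varphi$; then $g_{w} = e^{2w}g = e^{2(\varphi+w)}(dx^{2}+dy^{2})$ is written in the same coordinates with conformal factor $\varphi+w$. The key fact to invoke is the classical formula for the Gaussian curvature of a metric in isothermal form: if $h = e^{2\psi}(dx^{2}+dy^{2})$ then $K_{h} = -e^{-2\psi}\,\triangle_{0}\psi$, where $\triangle_{0} = \partial_{xx}+\partial_{yy}$ is the flat (Euclidean) Laplacian. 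This is the main computational ingredient; I would either cite it as standard or derive it quickly from the Brioschi/Liouville formula.

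Applying this to $g$ gives $K_{g} = -e^{-2\varphi}\triangle_{0}\varphi$, and applying it to $g_{w}$ gives $K_{g_{w}} = -e^{-2(\varphi+w)}\triangle_{0}(\varphi+w) = -e^{-2(\varphi+w)}(\triangle_{0}\varphi + \triangle_{0}w)$. Multiplying the second identity by $e^{2w}$ yields $K_{g_{w}}e^{2w} = -e^{-2\varphi}(\triangle_{0}\varphi + \triangle_{0}w) = K_{g} - e^{-2\varphi}\triangle_{0}w$. It then remains only to identify $e^{-2\varphi}\triangle_{0}w$ with $\triangle_{g}w$: indeed, for the metric $g = e^{2\varphi}(dx^{2}+dy^{2})$ the Laplace--Beltrami operator acting on functions is $\triangle_{g} = e^{-2\varphi}\triangle_{0}$ (the conformal weight of the Laplacian on functions in dimension two), so $e^{-2\varphi}\triangle_{0}w = \triangle_{g}w$. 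Substituting gives $K_{g_{w}}e^{2w} = K_{g} - \triangle_{g}w$, which rearranges to the stated identity. Since the statement is purely local (both sides are functions on $M$) and isothermal coordinates cover $M$, the local identity is the global identity.

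The only genuine subtlety is a bookkeeping one: fixing the sign convention for $\triangle_{g}$. The paper's convention must be the geometer's negative-definite Laplacian (so that $-\triangle_{g}$ appears with a plus sign in the formula and $\triangle_{g} = e^{-2\varphi}\triangle_{0}$ with $\triangle_{0} = \partial_{xx}+\partial_{yy}$); with the analyst's convention one would instead write $+\triangle_{g}w + K_{g} = K_{g_{w}}e^{2w}$. I would state the convention explicitly at the outset to avoid a sign error. Apart from this, the proof is routine: the two real inputs — the isothermal-coordinate curvature formula $K = -e^{-2\psi}\triangle_{0}\psi$ and the conformal covariance $\triangle_{g} = e^{-2\varphi}\triangle_{0}$ of the Laplacian on functions — are both entirely standard, and the rest is algebra. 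I therefore expect no real obstacle; the "hard part," such as it is, is simply recalling and citing the isothermal-coordinate curvature formula cleanly rather than re-deriving Christoffel symbols by hand.
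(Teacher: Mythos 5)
The paper does not actually prove this statement; it cites it as a classical formula from \cite{chang2005conformal}. Your proposal supplies a complete proof, and it is the standard one: pass to isothermal coordinates, invoke the formula $K = -e^{-2\psi}\triangle_{0}\psi$ for a metric $e^{2\psi}(dx^2+dy^2)$, and use the conformal covariance $\triangle_{g} = e^{-2\varphi}\triangle_{0}$ of the Laplace--Beltrami operator on functions in dimension two. The algebra checks out and does give $-\triangle_g w + K_g = K_{g_w}e^{2w}$ with the paper's sign convention.

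One small caution on your closing remark about sign conventions: the identity $\triangle_g = e^{-2\varphi}\triangle_0$ with $\triangle_0 = \partial_{xx}+\partial_{yy}$ is precisely the Laplace--Beltrami operator $\triangle_g f = |g|^{-1/2}\partial_i(|g|^{1/2}g^{ij}\partial_j f)$, which is the operator with \emph{nonpositive} spectrum (what many would call the analyst's $\Delta$, the geometer's being $-\text{div}\,\text{grad}$ with nonnegative spectrum). Your computation is internally consistent and matches the paper, so this is only a terminological slip, not a mathematical one — but it is worth phrasing carefully, since labeling it the ``geometer's negative-definite Laplacian'' invites exactly the sign confusion you are trying to forestall.
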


Another result we are using in this section is the following:
\begin{teo}[\cite{aubin2012nonlinear}, Theorem 4.7]\label{teorema inverso do laplaciano}
    Consider $(M,g)$ a compact $C^{\infty}$ manifold and $h$ a $C^{\infty}$ function. Then there exists $w \in C^{\infty}(M)$ such that $h = \Delta_{g} w$ if and only if $\int h \, d\sigma_{g} = 0$.
\end{teo}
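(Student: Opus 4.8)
The plan is to recognize this as a standard solvability statement for the Laplace--Beltrami operator on a closed manifold, and to prove it by the variational (or, equivalently, Fredholm-alternative) method applied to $\Delta_{g}$ on $C^{\infty}(M)$, using elliptic regularity to upgrade weak solutions to smooth ones. The necessity of the integral condition $\int_{M} h\,d\sigma_{g}=0$ is immediate: if $h=\Delta_{g}w$, then $\int_{M}\Delta_{g}w\,d\sigma_{g}=0$ by the divergence theorem on the closed manifold $M$ (here I adopt the geometer's sign convention under which $\Delta_{g}$ has nonpositive spectrum; the statement is convention-independent once one is consistent). So the content is the converse.

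For the converse, I would first set up the Hilbert space framework. Let $H^{1}(M)$ be the Sobolev completion of $C^{\infty}(M)$, and let $\overline{H}^{1}=\{u\in H^{1}(M):\int_{M}u\,d\sigma_{g}=0\}$ be the closed subspace of mean-zero functions. I would then invoke the Poincaré inequality on a closed manifold, $\|u\|_{L^{2}}\le C\|\nabla u\|_{L^{2}}$ for $u\in\overline{H}^{1}$, which makes the bilinear form $a(u,v)=\int_{M}\langle\nabla u,\nabla v\rangle\,d\sigma_{g}$ coercive on $\overline{H}^{1}$. Given $h\in C^{\infty}(M)$ with $\int_{M}h\,d\sigma_{g}=0$, the functional $v\mapsto\int_{M}hv\,d\sigma_{g}$ is bounded on $\overline{H}^{1}$, so the Lax--Milgram theorem produces a unique $w\in\overline{H}^{1}$ with $a(w,v)=-\int_{M}hv\,d\sigma_{g}$ for all $v\in\overline{H}^{1}$; the mean-zero hypothesis on $h$ guarantees that this weak identity in fact holds for all $v\in H^{1}(M)$ (testing against constants gives $0=0$ on both sides). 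Thus $w$ is a weak solution of $\Delta_{g}w=h$.

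The remaining step is elliptic regularity: since $\Delta_{g}$ is a second-order elliptic operator with smooth coefficients and $h\in C^{\infty}(M)$, the interior (here, global, as $M$ is closed) regularity theory gives $w\in H^{k}(M)$ for all $k$, hence $w\in C^{\infty}(M)$ by Sobolev embedding. This yields a genuine smooth solution of $h=\Delta_{g}w$, as claimed. The only mild subtlety --- and the step I would flag as the main point requiring care rather than a serious obstacle --- is the passage from the weak formulation on the constrained space $\overline{H}^{1}$ to the unconstrained equation, which is exactly where the compatibility condition $\int_{M}h\,d\sigma_{g}=0$ is used; everything else is textbook elliptic theory (indeed this is literally Theorem 4.7 of Aubin's book, so in the paper itself it is simply cited rather than reproved).
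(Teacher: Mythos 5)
Your proof is correct, and you yourself note at the end the key point: the paper does not prove this statement at all, but simply cites it as Theorem 4.7 of Aubin's book, so there is no ``paper's approach'' to compare against. Your variational argument (Lax--Milgram on the mean-zero subspace of $H^{1}(M)$, with the Poincar\'e inequality supplying coercivity, followed by elliptic bootstrapping to get smoothness) is the standard proof of this solvability result, and the way you isolate the role of the compatibility condition $\int_{M}h\,d\sigma_{g}=0$ in passing from the constrained weak formulation to the unconstrained equation is exactly the right thing to emphasize.
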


When defining a conformal metric curve, it becomes essential to understand which metrics along that curve preserve certain geometric properties. With this in mind, we present the following definition:

\begin{defi}\label{Definição da segunda família}
    Consider $\overline{\Lambda}, \zeta \in \mathbb{R}$ such that $\zeta > 0$ and $\overline{\Lambda} > 0$. Suppose that $(M,g) \in \mathcal{M}(\delta, k, \epsilon, \Lambda)$, where $B_{\delta}(p_{1}), \dots, B_{\delta}(p_{k})$ are the generalized bubbles of $(M,g)$.  

Given a family of metrics conformal to $g$, denoted by $g_{\rho}$ and parameterized by $\rho \in [0,1]$ with $g_{0} = g$, we define $\mathcal{M}_{\rho}(M,g,\delta, k,\zeta,\overline{\Lambda})$ as the set of metrics $g_{\rho_{l}}$ for $\rho_{l} \in [0,1]$ such that
\begin{enumerate}

\item Every point where the curvature of $(M,g_{\rho_{l}})$ is non-negative is contained in $\bigcup_{i=1}^{k}B_{\delta}(p_{i})$.
\item For $i = 1, \dots, k$, let $\tilde{B_{\delta}}^{j}(p_{i})$ be a connected component of the lift of $B_{\delta}(p_{i})$ in the universal cover of $M$, denoted by $\tilde{M}$.
The distance between any two such balls with respect to the metric $g_{\rho_l}$ is greater than $\overline{\Lambda}$.
\item The curvature of $(M,g_{\rho_{l}})$ in  $(\bigcup_{i=1}^{k}B_{\delta
}(p_{i}))^{c}$ is smaller than $-\zeta$.

    \end{enumerate}
\end{defi}

Note that $B_{\delta}(p_{i})$ represents a ball of radius $\delta$ centered at $p_{i}$ only in the original metric, $g$. As the metric changes, the notion of distance also changes. Denote by $B_{\delta}^{\rho_{l}}(p_{i})$ a ball in the metric $g_{\rho_{l}}$. 

With definition \ref{Definição da segunda família} in mind, we will be able to prove, using the results developed in this section, the following theorem:

\begin{teo}\label{Teorema 2 introdução}
Let $(M,g)\in\mathcal{M}(\delta,k,\epsilon,\Lambda)$ with $\epsilon<\frac{-2\pi\chi(M)}{\operatorname{vol}(M)}
$.

Then there exists $w\in C^{\infty}(M)$ satisfying $\min_{M} w=0$ such that, defining
$$
g_{\rho}:=e^{2\rho w}g,\qquad \rho\in[0,1],
$$
and setting $\mu:=\max_{M} w$, the following properties hold:
\begin{enumerate}
\item for every $\rho_{l}\in[0,1]$,
$$
g_{\rho_{l}}\in\mathcal{M}_{\rho}(M,g,\delta,k,e^{-2\mu}\epsilon,\Lambda);
$$
\item the metric $(M,g_{1})$ has strictly negative curvature;
\item for each $i=1,\dots,k$,
$$
B_{\delta}(p_{i}) \subset B^{\rho_{l}}_{\delta e^{\rho_{l}\mu}}(p_{i}).
$$
\end{enumerate}
\end{teo}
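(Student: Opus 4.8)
The plan is to use Theorem \ref{teorema inverso do laplaciano} to manufacture the conformal factor $w$, and then use the conformal curvature formula (Theorem \ref{teorema fórmula da curvatura pelo laplaciano}) together with the hypothesis $\epsilon < -2\pi\chi(M)/\operatorname{vol}(M)$ to check that the three properties hold. Concretely, by Gauss--Bonnet $\int_M K_g\, d\sigma_g = 2\pi\chi(M) < 0$, so the constant $c := 2\pi\chi(M)/\operatorname{vol}(M)$ is the average of $K_g$ and satisfies $c < -\epsilon$. Set $h := c - K_g$; then $\int_M h\, d\sigma_g = 0$, so by Theorem \ref{teorema inverso do laplaciano} there is $v \in C^\infty(M)$ with $\Delta_g v = h$, i.e. $-\Delta_g v + K_g = c$. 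Normalizing $w := v - \min_M v$ (so $\min_M w = 0$ and $w$ differs from $v$ by an additive constant, which does not affect $\Delta_g w = \Delta_g v$), the conformal curvature formula gives $K_{e^{2w}g}\, e^{2w} = -\Delta_g w + K_g = c < -\epsilon < 0$. Hence $(M, e^{2w}g)$ has strictly negative curvature everywhere; this is property (2) with $g_1 = e^{2w}g$.

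Next I would handle the intermediate metrics $g_\rho = e^{2\rho w}g$. Applying Theorem \ref{teorema fórmula da curvatura pelo laplaciano} with conformal factor $\rho w$:
\begin{equation*}
K_{g_\rho}\, e^{2\rho w} = -\rho\,\Delta_g w + K_g = \rho(-\Delta_g w + K_g) + (1-\rho)K_g = \rho c + (1-\rho) K_g.
\end{equation*}
For $\rho \in [0,1]$ this is a convex combination of $c$ and $K_g$. On $(\bigcup_i B_\delta(p_i))^c$ we have $K_g < -\epsilon$ and $c < -\epsilon$, so $K_{g_\rho}\, e^{2\rho w} < -\epsilon$ there, whence $K_{g_\rho} < -\epsilon e^{-2\rho w} \le -\epsilon e^{-2\mu}$ on that set; this is property (3) of Definition \ref{Definição da segunda família} with $\zeta = e^{-2\mu}\epsilon$. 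Similarly, since both $c$ and $K_g$ are negative wherever $K_g$ is negative, and nonnegativity of $K_g$ only occurs inside $\bigcup_i B_\delta(p_i)$, the sign of $K_{g_\rho}$ can be nonnegative only inside $\bigcup_i B_\delta(p_i)$; this gives property (1) of Definition \ref{Definição da segunda família}. For property (2) of that definition (distance between lifts of bubbles at least $\overline{\Lambda} = \Lambda$ in the $g_\rho$ metric): since $w \ge 0$, the conformal factor $e^{2\rho w} \ge 1$, so $g_\rho \ge g$ as quadratic forms, hence $g_\rho$-lengths of curves dominate $g$-lengths, so $d_{g_\rho} \ge d_g$; as the $g$-distance between distinct lifts of bubbles is $> \Lambda$ by hypothesis, the same holds in $g_\rho$. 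This establishes property (1) of Theorem \ref{Teorema 2 introdução}.

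For property (3), $B_\delta(p_i) \subset B^{\rho_l}_{\delta e^{\rho_l \mu}}(p_i)$: any point $q \in B_\delta(p_i)$ satisfies $d_g(q, p_i) < \delta$; taking a minimizing $g$-geodesic from $p_i$ to $q$, its $g_{\rho_l}$-length is $\int e^{\rho_l w}\, ds_g \le e^{\rho_l \mu} \int ds_g = e^{\rho_l \mu}\, d_g(q,p_i) < \delta e^{\rho_l \mu}$, so $d_{g_{\rho_l}}(q, p_i) < \delta e^{\rho_l \mu}$, i.e. $q \in B^{\rho_l}_{\delta e^{\rho_l \mu}}(p_i)$. Throughout, one should also note that the balls $B_\delta(p_i)$ remain strongly convex and free of focal points in the metrics $g_\rho$ for the membership in $\mathcal{M}_\rho$ to make literal sense; this follows because the curvature bound inside the bubbles is controlled by the upper bound on $K^+$ via Lemma \ref{proposição da prova que existe vizinhanças sem pontos focais} applied to $g_\rho$ — but since $e^{2\rho w} \ge 1$ the $g_\rho$-radius of $B_\delta(p_i)$ is at most $\delta e^{\rho\mu} \le \delta e^\mu$, and the $g_\rho$-curvature there is bounded by a constant depending on $K^+$, so choosing $K^+$ small (which is exactly the hypothesis of Theorem \ref{Teorema 3 introdução}) keeps these balls free of focal points. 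The main obstacle I anticipate is not any single estimate but rather bookkeeping the interplay between the fixed sets $B_\delta(p_i)$ (defined in the original metric $g$) and the varying metrics $g_\rho$: one must be careful that ``generalized bubble'' and ``complement'' refer to the $g$-balls throughout, and that all the inequalities involving $e^{2\rho w}$ are used in the correct direction ($e^{2\rho w} \ge 1$ shrinks curvature in absolute value when curvature is negative, but expands distances).
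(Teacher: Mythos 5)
Your approach is the same as the paper's (Gauss--Bonnet to make $h$ have zero mean, invert the Laplacian, then the conformal-curvature formula, plus the elementary length/distance comparisons), but there is a sign error that breaks the key curvature identity. You set $h := c - K_g$ (matching the paper) and invoke Theorem \ref{teorema inverso do laplaciano} to find $v$ with $\Delta_g v = h$, and then assert ``i.e.\ $-\Delta_g v + K_g = c$.'' This does not follow: if $\Delta_g v = c - K_g$, then
$$
-\Delta_g v + K_g = -(c - K_g) + K_g = 2K_g - c,
$$
which is not $c$ and is not even negative at points where $K_g$ is large and positive (and Theorem \ref{Teorema 2 introdução} has no hypothesis on $K^{+}$, so you cannot rule that out). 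The paper avoids this by choosing $w$ with $\Delta_g w = -h = K_g - c$; then $-\Delta_g w + K_g = c$ holds and the downstream identity $K_{g_\rho} e^{2\rho w} = \rho c + (1-\rho)K_g$ is correct. Your fix is one sign: either set $h := K_g - c$, or keep your $h$ and take $w$ with $\Delta_g w = -h$. With that correction the rest of your argument for properties (1), (2), (3) is sound and essentially identical to the paper's Proposition \ref{proposição da deformação}, Corollary \ref{corolario a  distancia entre  pontos aumenta}, Lemma \ref{lema distância entre as bolhas aumenta}, and Lemma \ref{lema comprimento da bolha}.

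One further remark: your closing paragraph argues that membership in $\mathcal{M}_\rho$ also requires the balls $B_\delta(p_i)$ to remain strongly convex and free of focal points in each $g_\rho$, and that this needs a smallness assumption on $K^+$ borrowed from Theorem \ref{Teorema 3 introdução}. That is not required here. Definition \ref{Definição da segunda família} for $\mathcal{M}_\rho(M,g,\delta,k,\zeta,\overline\Lambda)$ only imposes the location of nonnegative curvature inside $\bigcup_i B_\delta(p_i)$, the lower bound $\overline\Lambda$ on the $g_{\rho_l}$-distance between lifts of the $g$-balls, and the curvature bound $<-\zeta$ outside. Strong convexity and absence of focal points of the bubbles are conditions of Definition \ref{definição da família} (on the original metric $g$), not of Definition \ref{Definição da segunda família}, which is why Theorem \ref{Teorema 2 introdução} carries no hypothesis on $K^+$. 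The $K^+$ bound only becomes relevant when you want the deformed metrics to be focal-point-free and Anosov, which is the content of Theorem \ref{Teorema 3 introdução}.
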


Although it is not one of the main objectives of this work, it succinctly captures the behavior of the deformation introduced in this section.

\medskip

The following result introduces a family of conformal deformations that will be instrumental in the proofs of Theorems~\ref{Teorema 2 introdução} and~\ref{Teorema 3 introdução}. Beyond its statement, the proof also provides interisting insight.

\begin{prop}\label{proposição da deformação}
    
   Consider $(M,g) \in \mathcal{M}(\delta, k, \epsilon, \Lambda)$, with $\epsilon<\frac{-2\pi\chi(M)}{vol(M)}$. Then, there exists $w \in C^{\infty}(M)$ satisfying $\min_{M} w = 0$ such that the deformation  
$$ g_{\rho} := e^{2\rho w} g, \quad \text{with } \rho \in [0,1], $$  
reduces the curvature of points with positive curvature as $\rho$ increases, preserves the negative curvature of points that already have negative curvature,  
and ensures that the negative curvature remains smaller than $-\zeta$ in $(\bigcup_{i=1}^{k} B_{\delta}(p_{i}))^{c}$, where $\zeta = e^{-2\mu} \epsilon$ and $\mu$ is the maximum of $w$ in $M$. \\
    
    Moreover, when $\rho=1$ the curvature of $g_{\rho}$ is strictly negative in $M$.
\end{prop}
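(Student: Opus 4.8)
The plan is to construct $w$ as (a rescaling of) the solution to an inverse Laplacian problem dictated by the conformal curvature formula of Theorem \ref{teorema fórmula da curvatura pelo laplaciano}. Recall that under $g_\rho = e^{2\rho w}g$ one has $K_{g_\rho} = e^{-2\rho w}\bigl(K_g - \rho\,\triangle_g w\bigr)$, writing $\triangle_g$ for the metric Laplacian and using linearity of $\triangle_g$ in the conformal factor $\rho w$. So the sign and size of $K_{g_\rho}$ are governed by the sign of $K_g - \rho\,\triangle_g w$, and we want to choose $w$ so that this quantity is controlled as $\rho$ runs over $[0,1]$: it should be made strictly negative at $\rho = 1$, it should stay negative wherever $K_g$ was already negative, and it should decrease (towards $-\infty$ relative to the original value, or at least towards something strictly negative) at points of positive curvature.

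First I would fix the target. By Gauss--Bonnet, $\int_M K_g\, d\sigma_g = 2\pi\chi(M) < 0$, so the constant $\bar K := \frac{2\pi\chi(M)}{\operatorname{vol}(M)}$ is negative; the hypothesis $\epsilon < -\bar K$ (i.e. $\bar K < -\epsilon < 0$) is exactly what makes the subsequent estimates work. Set $h := K_g - \bar K$. Then $\int_M h\, d\sigma_g = 0$, so by Theorem \ref{teorema inverso do laplaciano} there exists $w_0 \in C^\infty(M)$ with $\triangle_g w_0 = h = K_g - \bar K$. Replacing $w_0$ by $w := w_0 - \min_M w_0$ gives a function with $\min_M w = 0$ and still $\triangle_g w = K_g - \bar K$ (adding a constant does not change the Laplacian). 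With this choice, $K_g - \rho\,\triangle_g w = (1-\rho)K_g + \rho\bar K$, which is a convex combination of $K_g$ and the negative constant $\bar K$. Hence $K_{g_\rho} = e^{-2\rho w}\bigl((1-\rho)K_g + \rho\bar K\bigr)$.

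From this explicit formula the three claimed properties follow by inspection, and this is the step I would write out carefully. (i) At $\rho = 1$: $K_{g_1} = e^{-2w}\bar K < 0$ everywhere, giving strict negativity. (ii) At a point $p$ with $K_g(p) < 0$: both $K_g(p)$ and $\bar K$ are negative, so the convex combination $(1-\rho)K_g(p) + \rho\bar K < 0$ for all $\rho \in [0,1]$; thus $K_{g_\rho}(p) < 0$, and negativity is preserved. Moreover on $\bigl(\bigcup_i B_\delta(p_i)\bigr)^c$ we have $K_g < -\epsilon$ and $\bar K < -\epsilon$, so $(1-\rho)K_g + \rho\bar K < -\epsilon$ there, whence $K_{g_\rho} < -e^{-2\rho w}\epsilon \le -e^{-2\mu}\epsilon = -\zeta$ on that set (using $0 \le \rho w \le \mu$). (iii) At a point $p$ with $K_g(p) > 0$: then $(1-\rho)K_g(p) + \rho\bar K$ is strictly decreasing in $\rho$ (its $\rho$-derivative is $\bar K - K_g(p) < 0$), and the prefactor $e^{-2\rho w(p)} > 0$; a short computation of $\frac{d}{d\rho}K_{g_\rho}(p)$ shows it is negative as long as $K_{g_\rho}(p) \ge 0$ (the prefactor only accelerates the decrease once the bracket is still positive), so the curvature at $p$ decreases with $\rho$, and in fact becomes negative once $(1-\rho)K_g(p) + \rho\bar K < 0$, i.e. for $\rho > \frac{K_g(p)}{K_g(p) - \bar K}$.

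The genuinely delicate point is \textbf{not} any of these monotonicity computations — those are routine once the formula $K_{g_\rho} = e^{-2\rho w}\bigl((1-\rho)K_g + \rho\bar K\bigr)$ is in hand — but rather making sure the statement is interpreted correctly: $w$ is \emph{not} explicit, since it is produced by inverting the Laplacian, and one must invoke Theorem \ref{teorema inverso do laplaciano} only after checking the zero-average condition via Gauss--Bonnet, which is precisely where the hypothesis $\epsilon < \frac{-2\pi\chi(M)}{\operatorname{vol}(M)}$ enters. I would also remark that $\mu = \max_M w$ is then a well-defined finite constant depending on $(M,g)$ but not on $\rho$, which is what allows the uniform lower bound $-\zeta$ with $\zeta = e^{-2\mu}\epsilon$ to hold along the whole family. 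The strict negativity at $\rho = 1$ is immediate from (i). This completes the construction and the verification of all the asserted properties.
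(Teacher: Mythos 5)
Your proof is correct and follows essentially the same approach as the paper: set $\bar K = \frac{2\pi\chi(M)}{\operatorname{vol}(M)}$, use Gauss--Bonnet to verify the zero-average hypothesis of Theorem \ref{teorema inverso do laplaciano}, solve $\triangle_g w = K_g - \bar K$ with $\min_M w = 0$, and apply the conformal curvature formula to obtain $K_{g_\rho} = e^{-2\rho w}\bigl((1-\rho)K_g + \rho\bar K\bigr)$, after which the three properties follow by the same case analysis. Your explicit reformulation as a convex combination and the explicit check of the $\rho$-derivative at points of positive curvature are slightly cleaner than the paper's presentation, but the underlying argument is identical.
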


\begin{proof}
    First, observe that if $\int_M h\,dv=0$ for some $h\in C^\infty(M)$, then, by Theorem \ref{teorema inverso do laplaciano}, there exists $w\in C^\infty(M)$ with $\Delta_g w=h$.  We will choose
$$
h:=-\Bigl(K - \frac{2\pi\chi(M)}{\operatorname{vol}(M)}\Bigr).
$$
By Gauss–Bonnet Theorem
$$\int_{M}K \, dv= 2\pi\chi(M)\le -4\pi,$$
thus
$$
\int_M h\,dv
=-\int_M K\,dv +2\pi\chi(M)=0.
$$
Then, there exist $w$ such that $\Delta_g w=-h$ and knowing that adding a constant to a function does not change its Laplacian, we can suppose without loss of generality, that $\min_{M}w= 0$. Then for each $\rho\in[0,1]$, by Theorem \ref{teorema fórmula da curvatura pelo laplaciano}, the conformal curvature satisfies
$$
K_{\rho w}=e^{-2\rho w}\bigl(K -\rho\,\Delta_g w\bigr)
=e^{-2\rho w}\bigl(K+\rho\,h\bigr).
$$
Choose $\mu=\max_M w$. 

Therefore, since
  $\frac{2\pi\chi(M)}{vol(M)}<-\epsilon$, for all $\rho\in[0,1]$, the conformal curvature satisfies
\begin{enumerate}
  \item If $x\notin\bigcup_iB_\delta(p_i)$, then $K(x)<-\epsilon$ and 
  $$K_{\rho w}(x)<e^{-2\rho w(x)}(-\epsilon)\le -e^{-2\mu}\epsilon$$
  because 
  $$K(x)+\rho h(x)=K(x)(1-\rho) + \rho\frac{2\pi\chi(M)}{vol(M)}<-\epsilon(1-\rho)-\epsilon\rho=-\epsilon.$$
  \item If $K(x)<0$, then 
  $$K_{\rho w}(x)=e^{-2\rho w(x)}(-\rho \triangle_{g} w(x) + K(x))<0,$$
  because
  $$K(x)+\rho h(x)=K(x)(1-\rho) + \rho\frac{2\pi\chi(M)}{vol(M)}<0$$
  and $e^{-2\rho w(x)}\ge 1$.
  \item If $K(x)\ge0$, then  
  $$K_{\rho w}(x)<e^{-2\rho w(x)}K(x)\le K(x),$$
  because 
  $$K(x)+\rho h(x)=K(x)(1-\rho) + \rho\frac{2\pi\chi(M)}{vol(M)}<K(x)$$
  and $e^{-2\rho w(x)}\ge 1$.
   \item In particular, $K_{\rho w}(x)<0$ if $\rho=1$ since $-\triangle_{g} w(x) + K(x)<0$.
\end{enumerate}

In other words, the curvature function decreases in the regions of positive curvature as $\rho$ increases and maintains a negative upper bound in $(\bigcup_{i=1}^{k}B_{\delta
}(p_{i}))^{c}$. Furthermore, when $\rho=1$ the curvature is strictly negative. This completes the proof.  
\end{proof}

\begin{figure}[H]
    \centering
    \includegraphics[scale=0.52]{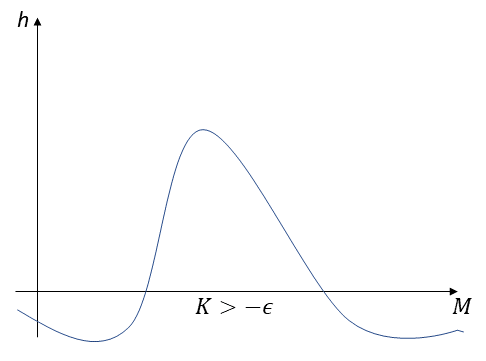}
    \caption{Behavior of $h$.}
   
    \end{figure}

We have seen that the condition $\epsilon < \frac{-2\pi\chi(M)}{\operatorname{vol}(M)}$ is crucial for the construction of the deformation. However this choice does not significantly restrict the initial selection of $\epsilon$.

\medskip

A notable example is given by Gulliver-type surfaces. These are obtained by modifying a surface of negative curvature, removing a small strongly convex ball, and inserting in its place a region with nonnegative curvature, while maintaining the original metric at all other points.

Consider, for instance, a surface $(N,g)$ of genus greater than $1$ with constant curvature $-1$. Then,
$$
\int_{N} K = -\operatorname{vol}_{g}(N) = 2\pi\chi(N) \quad \Longrightarrow \quad -\frac{2\pi\chi(N)}{\operatorname{vol}_{g}(N)} = 1.
$$

Performing the surgery described in \cite{gulliver1975variety}, we obtain a new metric $\overline{g_\delta}$ with curvature $K \equiv -1$ outside a small neighborhood $B$, and with $K \geq 0$ inside $B$. For $\delta$ sufficiently small, we have
$$
\operatorname{vol}_{\overline{g_\delta}}(N) \approx \operatorname{vol}_g(N),
$$
which implies that $\epsilon$ can be chosen arbitrarily close to $1$.

This construction can be applied to any surface with negative constant curvature.\\

When we change the metric, we inevitably change the geodesics, Jacobi fields, curvatures, and other important features. However, it is crucial to estimate these changes. Consider $(M,g)\in \mathcal{M}(\delta, k,\epsilon,\Lambda)$ and $w$ as in Proposition \ref{proposição da deformação}.

The deformation we have constructed makes it possible to estimate the length of the curves.

\begin{lema}\label{lema geodésica aumenta}
  Let $\sigma: [a, b] \to M$ be a smooth curve. Then the length of $\sigma$ in the $g$ metric is smaller than (or equal to) the length of $\sigma$ in the $g_{\rho}$ metric.
\end{lema}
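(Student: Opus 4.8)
The plan is to use the fact that the conformal factor $e^{2\rho w}$ is pointwise $\geq 1$, since $w \geq \min_M w = 0$ by the normalization of $w$ in Proposition~\ref{proposição da deformação}. This is the entire content of the lemma, so the proof is short and essentially a one-line computation once the definitions are unwound.

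First I would recall the definition of the length of a smooth curve $\sigma:[a,b]\to M$ with respect to a Riemannian metric: for the metric $g_\rho = e^{2\rho w} g$ we have
$$
L_{g_\rho}(\sigma)=\int_a^b \sqrt{g_\rho(\sigma'(t),\sigma'(t))}\,dt=\int_a^b e^{\rho w(\sigma(t))}\sqrt{g(\sigma'(t),\sigma'(t))}\,dt.
$$
Since $\rho \in [0,1]$ and $w(\sigma(t)) \geq 0$ for every $t$, we have $e^{\rho w(\sigma(t))} \geq 1$ pointwise, and therefore the integrand for $L_{g_\rho}$ dominates the integrand for $L_g$ at every point. Integrating the inequality over $[a,b]$ gives $L_{g}(\sigma) \leq L_{g_\rho}(\sigma)$, which is exactly the claim.

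There is no real obstacle here; the only thing to be careful about is making explicit that the normalization $\min_M w = 0$ (established in Proposition~\ref{proposição da deformação}) is what makes the argument work — without it, the conformal factor could drop below $1$ somewhere and the inequality would fail. I would also note in passing that the inequality is strict whenever $\sigma$ spends positive time in the region $\{w>0\}$, and that equality holds exactly when $\sigma$ lies entirely in $\{w=0\}$, though this refinement is not needed for the statement as given. It may be worth remarking as a corollary that the distance function satisfies $d_g(x,y) \leq d_{g_\rho}(x,y)$ for all $x,y\in M$, since this is what will actually be used when comparing the separation $\Lambda$ of lifts of bubbles across the deformation; but I would keep the lemma itself minimal and relegate that observation to a sentence after the proof.
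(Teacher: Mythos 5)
Your proof is correct and is essentially identical to the paper's: both rest on the pointwise bound $e^{\rho w}\ge 1$ (from $w\ge 0$) so that the integrand for $L_{g_\rho}$ dominates that for $L_g$. The only cosmetic difference is that you factor out $e^{\rho w(\sigma(t))}$ explicitly, whereas the paper states the equivalent inequality $g_\rho(X,X)\ge g(X,X)$ before integrating.
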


\begin{proof}
In fact, since $g_{\rho}=e^{2\rho w}$ with $w\ge 0$, we have
$$g_{\rho}(X,X)\ge g(X,X) \, \forall x\in TM.$$
Then,
$$L(\sigma) - L_{\rho}(\sigma) = \int_{a}^{b} ||\sigma'(t)||_{g} \, dt - \int_{a}^{b} ||\sigma'(t)||_{g_{\rho}} \, dt\le 0.$$
\end{proof}

\begin{coro}\label{corolario a  distancia entre  pontos aumenta}
   The distance between points in $M$ does not decrease as $\rho$ increases. 
\end{coro}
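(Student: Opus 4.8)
The plan is to deduce this immediately from Lemma \ref{lema geodésica aumenta} by unwinding the definition of the Riemannian distance as an infimum of lengths of connecting curves. Fix $\rho \leq \rho'$ in $[0,1]$ and points $p, q \in M$. The key observation is that for any smooth (or piecewise smooth) curve $\sigma \colon [a,b] \to M$ with $\sigma(a) = p$ and $\sigma(b) = q$, Lemma \ref{lema geodésica aumenta} (applied with the pair of parameters $\rho \leq \rho'$, using $e^{2\rho w} \leq e^{2\rho' w}$ since $w \geq 0$) gives $L_{\rho}(\sigma) \leq L_{\rho'}(\sigma)$.

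The main step is then the standard fact that $d_{g_\rho}(p,q) = \inf_\sigma L_\rho(\sigma)$, where the infimum runs over the (same, metric-independent) set of piecewise smooth curves from $p$ to $q$. Since each term in the $\rho'$-infimum dominates the corresponding term in the $\rho$-infimum, taking infima preserves the inequality: $d_{g_\rho}(p,q) \leq d_{g_{\rho'}}(p,q)$. One should remark that it suffices to work with the original fixed parameter $\rho = 0$ versus a general $\rho_l$, which is the case actually needed later (e.g.\ to compare distances in $g$ with distances in $g_{\rho_l}$, as in item (2) of Definition \ref{Definição da segunda família}), but the argument is identical for any two parameters.

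There is no real obstacle here; the only point requiring a word of care is that the comparison in Lemma \ref{lema geodésica aumenta} is stated for $g$ versus $g_\rho$, whereas monotonicity in $\rho$ requires comparing $g_\rho$ with $g_{\rho'}$. This is handled by exactly the same pointwise inequality $g_\rho(X,X) = e^{2\rho w}g(X,X) \leq e^{2\rho' w}g(X,X) = g_{\rho'}(X,X)$ valid whenever $w \geq 0$ and $\rho \leq \rho'$, so Lemma \ref{lema geodésica aumenta}'s proof applies verbatim with $g$ replaced by $g_\rho$ and $g_\rho$ replaced by $g_{\rho'}$. Hence the corollary follows.
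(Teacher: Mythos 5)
Your proof is correct and takes essentially the same approach as the paper: both rely on the pointwise length comparison from Lemma \ref{lema geodésica aumenta} together with the characterization of distance as an infimum of lengths over a metric-independent class of curves. The paper phrases it by choosing the $g_\rho$-minimizing geodesic and bounding its $g$-length from below by $d_g(p,q)$, whereas you compare the two infima directly; these are the same argument. Your remark that the monotonicity extends to arbitrary $\rho \leq \rho'$ (rather than just $0$ versus $\rho$) is accurate and is in fact exactly the content of Lemma \ref{lema distância entre as bolhas aumenta}, which the paper proves separately by the same pointwise comparison.
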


\begin{proof}
Let $p$ and $q$ be points in $M$. Let $\gamma_{\rho}$ be the geodesic segment connecting $p$ and $q$ in the metric $g_{\rho}$. By Lemma \ref{lema geodésica aumenta},
$$L(\gamma_{\rho})\le L_{\rho}(\gamma_{\rho}).$$
Therefore, since the distance between $p$ and $q$ in the metric $g$ is less than $L(\gamma_{\rho})$ the result follows.
\end{proof}

\begin{obs}
Although the region of non-negative curvature is decreasing as a set of points during the conformal deformation, this does not guarantee that the maximum distance between points with non-negative curvature is also decreasing. In fact, the distance between such points may increase in their respective metrics, as we show in Corollary~\ref{corolario a  distancia entre  pontos aumenta}.
\end{obs}

Let us now verify how the distance between the generalized bubbles behaves as the parameter $\rho$ increases.

\begin{lema}\label{lema distância entre as bolhas aumenta}
Consider $\tilde{B}_{\delta}^{j_{1}}(p_{i_{1}}), \tilde{B}_{\delta}^{j_{2}}(p_{i_{2}}) \subset (\tilde{M}, \tilde{g})$ generalized bubbles, lifting of $(M,g)$, such that $i_{1} \ne i_2$ or $j_{1} \ne j_2$. Then, for $\rho_{1}, \rho_{2} \in [0,1]$ with $\rho_{2} > \rho_{1}$,
$$
\tilde{d}_{\rho_{2}}(\tilde{B}_{\delta}^{j_{1}}(p_{i_{1}}), \tilde{B}_{\delta}^{j_{2}}(p_{i_{2}})) \ge \tilde{d}_{\rho_{1}}(\tilde{B}_{\delta}^{j_{1}}(p_{i_{1}}), \tilde{B}_{\delta}^{j_{2}}(p_{i_{2}})),
$$
where $\tilde{d}_{\rho_{2}}$ and $\tilde{d}_{\rho_{1}}$ are the distance functions associated with the metrics $\tilde{g}_{\rho_{2}}$ and $\tilde{g}_{\rho_{1}}$, respectively.
\end{lema}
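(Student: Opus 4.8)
The statement is essentially a corollary of Corollary~\ref{corolario a distancia entre pontos aumenta} transported to the universal covering, so the plan is to reduce the distance between lifted bubbles to a distance between points realizing that distance, and then invoke the monotonicity already established downstairs. First I would observe that the conformal factor $e^{2\rho w}$ pulls back to the universal covering: since $\tilde g_\rho = e^{2\rho \tilde w}\tilde g$ with $\tilde w \geq 0$ the pullback of $w$, the pointwise inequality $\tilde g_{\rho_2}(X,X) \geq \tilde g_{\rho_1}(X,X)$ holds for every tangent vector, because $\rho_2 > \rho_1$ and $\tilde w \geq 0$ force $e^{2\rho_2 \tilde w} \geq e^{2\rho_1 \tilde w}$. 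Hence, exactly as in Lemma~\ref{lema geodésica aumenta} and its corollary, the length of any smooth curve in $\tilde M$ is non-decreasing in $\rho$, and therefore the distance between any two points of $\tilde M$ is non-decreasing in $\rho$.

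Next I would handle the passage from points to the sets $\tilde B_\delta^{j_1}(p_{i_1})$ and $\tilde B_\delta^{j_2}(p_{i_2})$. Fix $\rho_1$; since the bubbles are bounded sets and $(\tilde M, \tilde g_{\rho_1})$ is a complete length space (being the universal cover of a compact manifold), the distance $\tilde d_{\rho_1}(\tilde B_\delta^{j_1}(p_{i_1}), \tilde B_\delta^{j_2}(p_{i_2}))$ is attained by some pair of points $x \in \overline{\tilde B_\delta^{j_1}(p_{i_1})}$, $y \in \overline{\tilde B_\delta^{j_2}(p_{i_2})}$. Then
$$
\tilde d_{\rho_2}(\tilde B_\delta^{j_1}(p_{i_1}), \tilde B_\delta^{j_2}(p_{i_2})) \leq \tilde d_{\rho_2}(x,y),
$$
because $x$ and $y$ still lie in the respective (closed) bubbles, and the infimum defining the set-distance is over at least this pair. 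On the other hand, by the point-monotonicity of the previous paragraph,
$$
\tilde d_{\rho_2}(x,y) \geq \tilde d_{\rho_1}(x,y) = \tilde d_{\rho_1}(\tilde B_\delta^{j_1}(p_{i_1}), \tilde B_\delta^{j_2}(p_{i_2})).
$$
Wait — the first of these two chains points the wrong way, so I would instead run the argument the other direction: let $x,y$ realize $\tilde d_{\rho_2}$ of the two sets (again attained, since the bubbles have compact closure and $\tilde g_{\rho_2}$ is complete), so that $\tilde d_{\rho_2}(\tilde B^{j_1}_\delta(p_{i_1}),\tilde B^{j_2}_\delta(p_{i_2})) = \tilde d_{\rho_2}(x,y) \geq \tilde d_{\rho_1}(x,y) \geq \tilde d_{\rho_1}(\tilde B^{j_1}_\delta(p_{i_1}),\tilde B^{j_2}_\delta(p_{i_2}))$, where the last inequality holds because $x,y$ lie in (the closures of) the sets. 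This gives the claim.

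The only genuinely delicate point is the attainment of the set-distance, i.e. that the infimum is a minimum. This follows from completeness of $(\tilde M,\tilde g_{\rho_2})$ together with the fact that the (closed) generalized bubbles, being lifts of balls of a fixed $g$-radius $\delta$ in a compact surface, have compact closure in $\tilde M$; a minimizing sequence of pairs then subconverges. Alternatively, one can avoid attainment altogether by working directly with infima: given $\eta > 0$ pick $x,y$ in the respective bubbles with $\tilde d_{\rho_2}(x,y) < \tilde d_{\rho_2}(\text{sets}) + \eta$, note $\tilde d_{\rho_1}(x,y) \leq \tilde d_{\rho_2}(x,y)$ would be the wrong direction, so instead: for the clean inequality one really does want the pair realizing (or nearly realizing) the $\rho_1$-distance and then push forward — using $\tilde d_{\rho_2}(x,y) \geq \tilde d_{\rho_1}(x,y)$ for a pair nearly realizing $\tilde d_{\rho_1}$ of the sets gives $\tilde d_{\rho_2}(\text{sets}) \leq \tilde d_{\rho_2}(x,y)$ — still wrong. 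I will therefore commit to the attainment argument for $\rho_2$: compactness of the closed bubbles plus completeness of $\tilde g_{\rho_2}$ yields minimizers $x,y$ for $\tilde d_{\rho_2}$, and then the monotonicity in $\rho$ of the point-distance closes the proof. \fim
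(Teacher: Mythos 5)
Your final committed argument is correct and rests on the same core fact the paper uses, namely the pointwise monotonicity of $\tilde g_\rho(X,X)$ in $\rho$ (since $w\ge 0$), hence of curve lengths, hence of point distances in $\tilde M$. What differs is the passage from point-distances to set-distances. The paper keeps $q_1,q_2$ arbitrary, proves $\tilde d_{\rho_1}(q_1,q_2)\le\tilde d_{\rho_2}(q_1,q_2)$ for every pair, and then simply takes infima: if $f\le g$ pointwise on a set of pairs, then $\inf f\le\inf g$. No attainment is needed. You instead insist on attaining the $\rho_2$ set-distance by a compactness argument, which is valid (the bubbles have compact closure, $\tilde g_{\rho_2}$ is complete, distance is continuous) but is an unnecessary extra step.

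There is also a genuine slip in your discussion, worth flagging even though you recover from it. In the ``alternatively, avoid attainment'' paragraph you pick $x,y$ with $\tilde d_{\rho_2}(x,y)<\tilde d_{\rho_2}(\text{sets})+\eta$ and then dismiss $\tilde d_{\rho_1}(x,y)\le\tilde d_{\rho_2}(x,y)$ as ``the wrong direction.'' It is in fact exactly the right direction: chaining
$$\tilde d_{\rho_1}(\text{sets})\le\tilde d_{\rho_1}(x,y)\le\tilde d_{\rho_2}(x,y)<\tilde d_{\rho_2}(\text{sets})+\eta$$
and letting $\eta\to 0$ closes the proof with no attainment at all, which is essentially the paper's ``since this holds for any pair, the result follows.'' So your ``correct'' route (attainment for $\rho_2$) and the route you wrongly rejected both work; only your first attempt (realizing the $\rho_1$-distance) was actually useless. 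Tightening that reasoning would make the writeup match the paper's economy.
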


\begin{proof}
Given $\rho$, consider $(\tilde{M}, \tilde{g}_{\rho})$ as the covering of $M$ endowed with the pullback of $g_{\rho}$. Since $w \ge 0$, we have
$$
\tilde{g}_{\rho_{2}}(X,X) = e^{2\rho_{2}(w\circ\pi)}\tilde{g}(X,X) \ge e^{2\rho_{1}(w\circ\pi)}\tilde{g}(X,X) = \tilde{g}_{\rho_{1}}(X,X), \quad \forall X \in T\tilde{M}.
$$

Let $q_{1} \in \tilde{B}_{\delta}^{j_{1}}(p_{i_{1}})$ and $q_{2} \in \tilde{B}_{\delta}^{j_{2}}(p_{i_{2}})$, and let $\tilde{\gamma}_{\rho_{2}} : [c,d] \to \tilde{M}$ be the minimizing geodesic segment connecting $q_{1}$ and $q_{2}$. Then,
$$
\tilde{L}_{\rho_{2}}(\tilde{\gamma}_{\rho_{2}}) - \tilde{L}_{\rho_{1}}(\tilde{\gamma}_{\rho_{2}}) = \int_{c}^{d} \left( \|\tilde{\gamma}_{\rho_{2}}'(t)\|_{\tilde{g}_{\rho_2}} - \|\tilde{\gamma}_{\rho_{2}}'(t)\|_{\tilde{g}_{\rho_1}} \right) dt \ge 0.
$$
On the other hand,
$$
\tilde{d}_{\rho_{1}}(q_{1}, q_{2}) \le \tilde{L}_{\rho_1}(\tilde{\gamma}_{\rho_{2}}) \le \tilde{L}_{\rho_2}(\tilde{\gamma}_{\rho_{2}}) = \tilde{d}_{\rho_{2}}(q_{1}, q_{2}).
$$
Since this is valid for any pair of points in $\tilde{B}_{\delta}^{j_{1}}(p_{i_{1}})$ and $\tilde{B}_{\delta}^{j_{2}}(p_{i_{2}})$, the result follows.
\end{proof}

It is also important to estimate the size of a generalized bubble after the metric is deformed.

\begin{lema}\label{lema comprimento da bolha}
$B_{\delta}(q) \subset B^{\rho}_{\delta e^{\rho\mu}}(q)$, where $B^{\rho}_{\delta e^{\rho\mu}}(q)$ denotes the ball centered at $q$ with radius $\delta e^{\rho\mu}$ in the metric $g_{\rho}$. In particular, for all $\rho \in [0,1]$,
$$
B_{\delta}(q) \subset B^{\rho}_{\delta e^{\mu}}(q).
$$
\end{lema}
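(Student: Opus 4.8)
The plan is to compare distances in the two metrics $g$ and $g_\rho$ directly, using only the pointwise bound $g \le g_\rho \le e^{2\rho\mu}g$ that follows from $0 \le \rho w \le \rho\mu$. Concretely, for any smooth curve $\sigma:[a,b]\to M$ we have $L_\rho(\sigma) \le e^{\rho\mu} L(\sigma)$, since $\|\sigma'(t)\|_{g_\rho} = e^{\rho w(\sigma(t))}\|\sigma'(t)\|_g \le e^{\rho\mu}\|\sigma'(t)\|_g$. Taking infima over curves joining a point $x$ to $q$ gives $d_\rho(x,q) \le e^{\rho\mu} d(x,q)$.

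From this the inclusion is immediate. If $x \in B_\delta(q)$, then $d(x,q) < \delta$, hence $d_\rho(x,q) \le e^{\rho\mu}d(x,q) < e^{\rho\mu}\delta$, i.e. $x \in B^\rho_{\delta e^{\rho\mu}}(q)$. This proves $B_\delta(q) \subset B^\rho_{\delta e^{\rho\mu}}(q)$. The second, uniform statement follows since $\rho\mu \le \mu$ for $\rho \in [0,1]$ and $\mu \ge 0$ (recall $\min_M w = 0$, so $\mu = \max_M w \ge 0$), whence $\delta e^{\rho\mu} \le \delta e^{\mu}$ and $B^\rho_{\delta e^{\rho\mu}}(q) \subset B^\rho_{\delta e^{\mu}}(q)$.

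There is really no serious obstacle here: the argument is a one-line Lipschitz comparison of the two metrics, entirely parallel to Lemma~\ref{lema geodésica aumenta} and Corollary~\ref{corolario a distancia entre pontos aumenta} but now using the upper bound on the conformal factor rather than the lower one. The only point requiring a word of care is making sure the right conformal factor exponent appears: one must use $\sup_M e^{\rho w} = e^{\rho\mu}$ (not $e^\mu$) to get the sharp radius $\delta e^{\rho\mu}$, and then relax to $e^\mu$ only at the end for the uniform inclusion. I would write the proof in two short displays — first the length estimate $L_\rho(\sigma)\le e^{\rho\mu}L(\sigma)$, then the distance estimate and the resulting inclusion — and close with the one-sentence remark that $\rho\mu\le\mu$ yields the stated uniform containment.

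\begin{proof}
Since $\min_M w = 0$, we have $\mu = \max_M w \ge 0$, and for $\rho \in [0,1]$ and $x \in M$,
$$
1 \le e^{\rho w(x)} \le e^{\rho \mu} \le e^{\mu}.
$$
Let $\sigma:[a,b]\to M$ be any smooth curve. Then
$$
L_{\rho}(\sigma) = \int_a^b \|\sigma'(t)\|_{g_{\rho}}\,dt = \int_a^b e^{\rho w(\sigma(t))}\|\sigma'(t)\|_{g}\,dt \le e^{\rho \mu}\int_a^b \|\sigma'(t)\|_{g}\,dt = e^{\rho \mu} L(\sigma).
$$
Taking the infimum over all curves joining $x$ to $q$ yields $d_{\rho}(x,q) \le e^{\rho\mu}\,d(x,q)$ for all $x$. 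Hence, if $x \in B_{\delta}(q)$, then $d(x,q) < \delta$, so $d_{\rho}(x,q) < e^{\rho\mu}\delta$, which means $x \in B^{\rho}_{\delta e^{\rho\mu}}(q)$. This proves $B_{\delta}(q) \subset B^{\rho}_{\delta e^{\rho\mu}}(q)$. Finally, since $\rho\mu \le \mu$, we have $\delta e^{\rho\mu} \le \delta e^{\mu}$, so $B^{\rho}_{\delta e^{\rho\mu}}(q) \subset B^{\rho}_{\delta e^{\mu}}(q)$, and therefore $B_{\delta}(q) \subset B^{\rho}_{\delta e^{\mu}}(q)$ for all $\rho \in [0,1]$.
\end{proof}
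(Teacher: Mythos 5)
Your proof is correct and takes essentially the same approach as the paper: compare lengths via the pointwise bound $e^{\rho w} \le e^{\rho\mu}$, conclude $d_\rho(x,q) \le e^{\rho\mu}d(x,q)$, and read off the ball inclusion. The only cosmetic difference is that the paper tests the inequality against the $g$-minimizing geodesic explicitly, whereas you phrase it as an infimum over all curves; both yield the same distance estimate.
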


\begin{proof}
Fix $\rho \in [0,1]$ and let $p \in B_{\delta}(q)$.

Let $\gamma : [a,b] \to M$ be the geodesic segment connecting $q$ and $p$ in the original metric $g$. As in Lemma~\ref{lema geodésica aumenta}, we compute:
$$
L_{\rho}(\gamma) = \int_{a}^{b} \|\gamma'(t)\|_{g_{\rho}} \, dt = \int_{a}^{b} \sqrt{g_{\rho}(\gamma'(t),\gamma'(t))} \, dt = \int_{a}^{b} e^{\rho w(\gamma(t))} \sqrt{g(\gamma'(t),\gamma'(t))} \, dt \le e^{\rho\mu} L(\gamma).
$$

Since $p \in B_{\delta}(q)$, we have $L(\gamma) < \delta$, and thus
$$
L_{\rho}(\gamma) \le \delta e^{\rho\mu}.
$$
Note that $\gamma$ may not be minimizing with respect to $g_{\rho}$, but this inequality ensures
$$
d_{\rho}(p,q) \le \delta e^{\rho\mu},
$$
which implies
$$
B_{\delta}(q) \subset B^{\rho}_{\delta e^{\rho\mu}}(q).
$$
\end{proof}

The previous lemma provides an estimate for the length of generalized bubbles as the metric is deformed, and this estimate will be crucial in the development of the proof of Theorem~\ref{Teorema 3 introdução} in the next chapter.
It is worth noting, however, that although the bound is essential, it presents a subtle difficulty: a priori, when considering the balls $B_{\delta}(q_1)$ and $B_{\delta}(q_2)$, it may happen that
$$
B_{\delta e^{\mu}}^{\rho}(q_1)\cap B_{\delta}(q_2)\neq \varnothing
\quad \text{for some } \rho\in[0,1].
$$
    \begin{figure}[H]
    \centering7
    \includegraphics[scale=0.28]{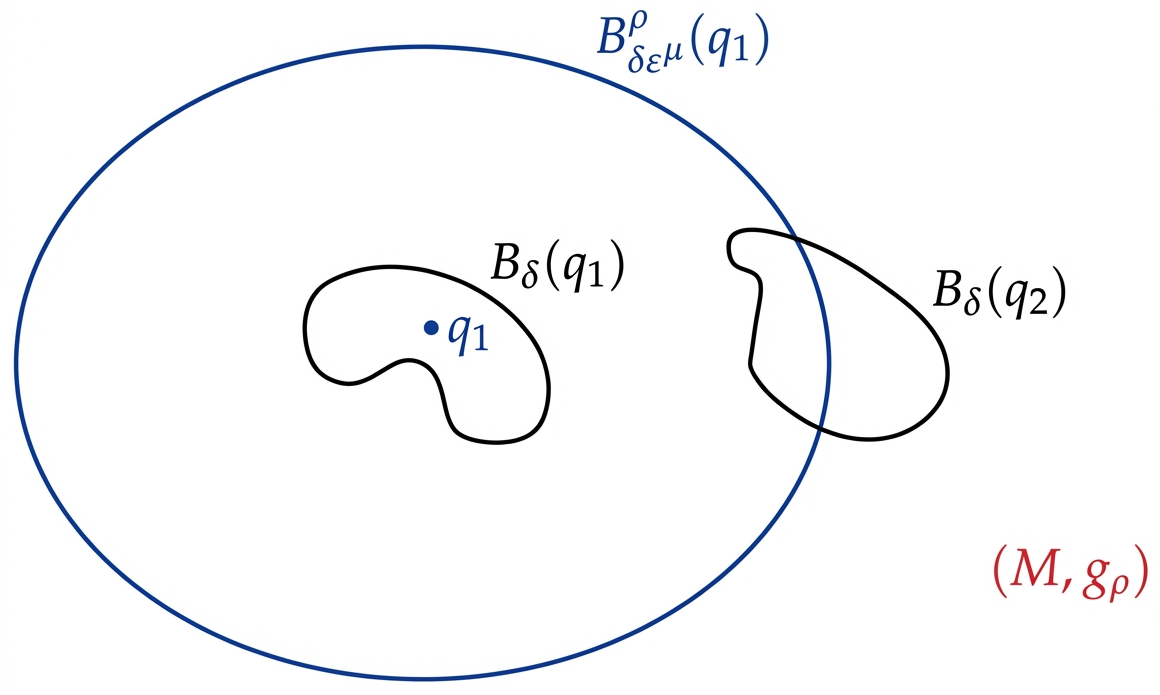}
    \caption{Possible intersection between $B_{\delta e^{\mu}}^{\rho}(q_1)$ and $B_{\delta}(q_2)$ under the metric deformation.}
    \end{figure}
This problem will require delicate work in the following chapter.
For now, let us proceed to conclude the present chapter.

\medskip

We are now ready to prove Theorem~\ref{Teorema 2 introdução}.
\begin{proof}
We prove Theorem~\ref{Teorema 2 introdução}.

By Proposition~\ref{proposição da deformação}, for $i = 1, \cdots, k$, there exists a deformation $g_{\rho} := e^{2\rho w}g$ with $\rho \in [0,1]$ that maintains the curvature smaller than $-e^{-2\mu}\epsilon$ in $(\bigcup_{i=1}^{k} B_{\delta}(p_{i}))^{c}$, the non-negative curvature in $\bigcup_{i=1}^{k} B_{\delta}(p_{i})$ for all $\rho \in [0,1]$, and defines a metric with strictly negative curvature when $\rho = 1$.

By Lemma~\ref{lema comprimento da bolha}, we have
$$
B_{\delta}(p_{i}) \subset B^{\rho}_{\delta e^{\mu}}(p_{i}).
$$
Note that, for all $\rho \in [0,1]$, the curvature of $g_{\rho}$ is smaller than $-e^{-2\mu}\epsilon$ in 
$$
\left( \bigcup_{i=1}^{k} B_{\delta e^{\mu}}(p_{i}) \right)^{c}.
$$

Furthermore, the distance between points does not decrease as $\rho$ increases, by Corollary~\ref{corolario a  distancia entre  pontos aumenta}. On the other hand, since $(M,g) \in \mathcal{M}(M,\delta,k,\epsilon,\Lambda)$, the distance between the lifting of the generalized bubbles is greater than $\Lambda$ in the original metric. Then, by Lemma~\ref{lema distância entre as bolhas aumenta}, for all $\rho \in [0,1]$, the $g_{\rho}$-distance between the lifting of the generalized bubbles is greater than $\Lambda$.

Therefore, 
$$
(M,g_{\rho_{l}}) \in \mathcal{M}_{\rho}(M,B_{\delta}(p_{i}), k, e^{-2\mu}\epsilon, \Lambda),
$$ 
$$
B_{\delta}(p_{i}) \subset B_{\delta e^{\rho_{l}\mu}}^{\rho_{l}}(p_{i}) \quad \text{for all } \rho_{l} \in [0,1],
$$ 
and $(M,g_{1})$ has strictly negative curvature.
\end{proof}

\section{Deformation without focal points and Anosov} \label{seção 5}

In this section, we show sufficient conditions under which not only the original surface has no focal points, but also the surfaces obtained via conformal deformation of the metric remain free of focal points and are Anosov. Our goal is to prove the final result of this paper, namely Theorem~\ref{Teorema 3 introdução}.

\medskip

Before that, it is important to consider the following observation:

\begin{obs}
It is worth noting that there exist surfaces whose metrics admit regions of positive curvature while still satisfying the assumptions of the Theorem \ref{Teorema 3 introdução}. In fact, one can construct examples by considering neighborhoods of metrics on surfaces with nonpositive curvature, where the regions of zero curvature are confined to controlled and well-separated bubbles.

Let us consider the following situation: let $(M,\mathfrak{g})$ be a surface with nonpositive curvature such that
$$
(M,\mathfrak{g}) \in \mathcal{M}\left(\frac{\delta}{2}, k, 2\epsilon, 2\Lambda\right).
$$
Then, if $\delta$ is sufficiently small, there exists a $C^{\infty}$-neighborhood $V_{\mathfrak{g}}$ of the metric $\mathfrak{g}$ such that, for every $g \in V_{\mathfrak{g}}$, we have
$$
(M,g) \in \mathcal{M}(\delta, k, \epsilon, \Lambda),
$$
since the assumptions of strong convexity and absence of focal points for the generalized bubbles are preserved for sufficiently small balls.

On the other hand, consider the function
$$
h_{\mathfrak{g}} := -\left(K_{\mathfrak{g}} - \frac{2\pi\chi(M)}{\operatorname{vol}_{\mathfrak{g}}(M)}\right).
$$
Associated with $h_{\mathfrak{g}}$, we define the function $w_{\mathfrak{g}}$ as in Proposition~\ref{proposição da deformação}. In fact, since both the curvature and the volume depend continuously on the metric, the function
$$
h_{g} := -\left(K_{g} - \frac{2\pi\chi(M)}{\operatorname{vol}_{g}(M)}\right)
$$
also depends continuously on $g$. Therefore, by the definition of $w_g$ as the solution to the inverse Laplacian problem, we have that, in a $C^{\infty}$-neighborhood of $\mathfrak{g}$, the map
$$
F: V_{\mathfrak{g}} \to C^{\infty}(M), \quad g \mapsto w_{g},
$$
where $w_{g} = \Delta_g h_{g}$ with $\min_{M} w_{g} = 0$, is continuous.

Thus, since by hypothesis $K_{\mathfrak{g}} \leq 0$, the surface $(M,\mathfrak{g})$ satisfies the curvature condition in Theorem~\ref{Teorema 3 introdução}, and by continuity, the same holds for all metrics in a neighborhood of $\mathfrak{g}$. Consequently, there exist metrics admitting regions of positive curvature that still satisfy the assumptions of Theorem~\ref{Teorema 3 introdução}.
\end{obs}

Consider $(M,g)\in \mathcal{M}(\delta, k,\epsilon,\Lambda)$ with $\epsilon<\frac{-2\pi\chi(M)}{\operatorname{vol}(M)}$ and
$$
\Lambda = \frac{1}{\sqrt{\epsilon}}\,\mathrm{artanh}\left(1 - \frac{\epsilon}{2}\right),
$$
and let $w$ be as in Proposition~\ref{proposição da deformação}. We denote by $B_{\delta}(p)$ a ball in the metric $g$, and by $B_\delta(p_{1}), \ldots, B_\delta(p_{k})$ the generalized bubbles of $(M,g)$. Their liftings in $(\tilde{M},\tilde{g})$ are denoted by $\tilde{B}^{1}_{\delta}(p_{1}), \ldots, \tilde{B}^{l}_{\delta}(p_{k})$, etc. By Lemma~\ref{lema distância entre as bolhas aumenta}, these generalized bubbles remain at distance greater than $\Lambda$ after the deformation.

Let $B^{\rho}_{r}(p_{i})$ be a ball of radius $r$ in $(M,g_{\rho})$, and $\tilde{B}^{\rho}_{r}(p_{i}^{j})$ its lift in $(\tilde{M},\tilde{g}_{\rho})$, where $p_{i}^{j}$ is a lift of $p_{i}$.

We now establish estimates for the Riccati equation outside the generalized bubbles after deforming the metric. Recall from Remark~\ref{Obs tamanho de Lambda} that we may take
$$
\Lambda = \frac{1}{\sqrt{\epsilon}}\,\artanh\left(1 - \frac{\epsilon}{2}\right)
$$
in Theorem~\ref{Teorema 1 introdução}.

\begin{lema}\label{lema de quanto cresce a solução da equação de Riccati a partir da escolha de Lambda}
Let $\gamma_{\rho}$ be a geodesic in $(M,g_{\rho})$ such that
$$
\gamma_{\rho}|_{[0,\overline{\Lambda}]}:[0,\overline{\Lambda}] \to \left(\bigcup_{i=1}^{k} B_{\delta}(p_{i})\right)^{c}
$$
with $\overline{\Lambda} \geq \Lambda = \frac{1}{\sqrt{\epsilon}}\,\artanh\left(1 - \frac{\epsilon}{2}\right)$. Then, if $U_{\rho}$ is a solution to the Riccati equation along $\gamma_{\rho}$ with $U_{\rho}(0)\ge 0$, we have
$$
U_{\rho}(\overline{\Lambda}) > e^{-\mu}\sqrt{\epsilon}\,\tanh\left(\frac{1}{2}e^{-\mu}\ln(3)\right).
$$
\end{lema}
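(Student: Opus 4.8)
\textbf{Proof plan for Lemma~\ref{lema de quanto cresce a solução da equação de Riccati a partir da escolha de Lambda}.}

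The plan is to reduce the deformed Riccati inequality to the model equation with constant curvature, exactly as in Remark~\ref{Obs tamanho de Lambda}, but now keeping careful track of the conformal factor's effect on arc length and on the curvature bound. First I would recall from Proposition~\ref{proposição da deformação} that outside the generalized bubbles one has $K_{\rho w} < -e^{-2\mu}\epsilon$, so along $\gamma_\rho|_{[0,\overline\Lambda]}$ the curvature of $g_\rho$ is bounded above by $-e^{-2\mu}\epsilon$. Hence, writing the Riccati equation $U_\rho' + U_\rho^2 + K_{\rho w} = 0$ along $\gamma_\rho$ (parametrized by $g_\rho$-arc length) and comparing with the solution $V$ of $V' + V^2 - e^{-2\mu}\epsilon = 0$ with $V(0) = 0$, the standard Sturm-type comparison (subtract the two equations at $t=0$, where $U_\rho' - V' = -K_{\rho w} - e^{-2\mu}\epsilon \ge 0$, and propagate) gives $U_\rho(t) \ge V(t)$ for all $t \in [0,\overline\Lambda]$, where $V(t) = e^{-\mu}\sqrt{\epsilon}\,\tanh\!\big(e^{-\mu}\sqrt{\epsilon}\,t\big)$.

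Next I would bound $\overline\Lambda$ from below in terms of the original $\Lambda$. Here is the subtle point: the hypothesis says $\gamma_\rho|_{[0,\overline\Lambda]}$ lies outside the $g$-balls $B_\delta(p_i)$ and that $\overline\Lambda \ge \Lambda = \frac{1}{\sqrt\epsilon}\,\artanh(1-\tfrac\epsilon2)$, where $\overline\Lambda$ is measured in $g_\rho$-arc length. Since $\tanh$ is increasing, it suffices to evaluate $V$ at $t = \Lambda$, giving
$$
U_\rho(\overline\Lambda) \ge V(\overline\Lambda) \ge V(\Lambda) = e^{-\mu}\sqrt{\epsilon}\,\tanh\!\Big(e^{-\mu}\sqrt{\epsilon}\cdot\tfrac{1}{\sqrt\epsilon}\artanh\big(1-\tfrac\epsilon2\big)\Big) = e^{-\mu}\sqrt{\epsilon}\,\tanh\!\Big(e^{-\mu}\artanh\big(1-\tfrac\epsilon2\big)\Big).
$$
It then remains to check the elementary inequality $\artanh(1-\tfrac\epsilon2) \ge \tfrac{\ln 3}{3}$ for $0<\epsilon<1$: indeed at $\epsilon = 1$ one has $\artanh(\tfrac12) = \tfrac12\ln 3 > \tfrac{\ln 3}{3}$, and $\artanh(1-\tfrac\epsilon2)$ is decreasing in $\epsilon$, so for all $\epsilon\in(0,1)$ the value exceeds $\artanh(\tfrac12) = \tfrac12\ln 3 > \tfrac{\ln 3}{3}$. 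Combined with monotonicity of $\tanh$ and the inequality being strict somewhere, this yields the claimed strict bound $U_\rho(\overline\Lambda) > e^{-\mu}\sqrt\epsilon\,\tanh\!\big(e^{-\mu}\tfrac{\ln 3}{3}\big)$.

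The main obstacle I anticipate is bookkeeping around which metric measures $\overline\Lambda$ and making sure the comparison is run in the correct parametrization: the Riccati equation for $g_\rho$ must be written with respect to $g_\rho$-arc length, and the curvature bound $-e^{-2\mu}\epsilon$ comes from Proposition~\ref{proposição da deformação}, not from the naive $-\epsilon$. One should also be slightly careful that the comparison solution $V$ with $V(0)=0$ is the \emph{minimal} such solution, so that any $U_\rho$ with $U_\rho(0) \ge 0 = V(0)$ dominates it; this is exactly the content of the comparison argument already used in Lemma~\ref{proposição da prova que existe vizinhanças sem pontos focais} and Remark~\ref{Obs tamanho de Lambda}, so no new idea is needed, only care. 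Everything else — the explicit form of $V$, the monotonicity of $\tanh$ and $\artanh$, and the numerical inequality $\artanh(1-\tfrac\epsilon2) > \tfrac{\ln 3}{3}$ — is routine.
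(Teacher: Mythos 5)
Your approach is the same as the paper's: bound $K_{g_\rho}$ by $-e^{-2\mu}\epsilon$ outside the bubbles via Proposition~\ref{proposição da deformação}, compare $U_\rho$ with the constant-curvature Riccati solution $V(t)=e^{-\mu}\sqrt\epsilon\,\tanh(e^{-\mu}\sqrt\epsilon\,t)$ starting from $V(0)=0\le U_\rho(0)$, use monotonicity of $\tanh$ to pass from $\overline\Lambda$ to $\Lambda$, and finally exploit $\artanh(1-\tfrac\epsilon2)\ge\artanh(\tfrac12)=\tfrac12\ln 3$. This is correct and essentially the paper's own proof.

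One small slip: you state the final bound as $e^{-\mu}\sqrt\epsilon\,\tanh\!\bigl(e^{-\mu}\tfrac{\ln 3}{3}\bigr)$ and call that the claim, but the lemma as stated asserts $e^{-\mu}\sqrt\epsilon\,\tanh\!\bigl(\tfrac12 e^{-\mu}\ln 3\bigr)$, which is stronger. Your own intermediate inequality $\artanh(1-\tfrac\epsilon2)>\artanh(\tfrac12)=\tfrac12\ln 3$ already gives the stronger $\tfrac12$ bound directly; there is no need to weaken to $\tfrac{\ln 3}{3}$ here (that coarser constant is what later lemmas and Theorem~\ref{Teorema 3 introdução} use, but the present lemma proves more). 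So just keep $\tfrac12\ln 3$ in the last line and the proof matches the statement exactly.
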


\begin{proof}
For all $\rho \in [0,1]$, we have $K_{\rho} < -\zeta = -e^{-2\mu} \epsilon$ on 
$$
\left(\bigcup_{i=1}^{k} B_{\delta}(p_{i})\right)^{c}.
$$
By comparison with constant curvature $-\zeta$, we obtain
$$
U_{\rho}(\Lambda) > \sqrt{\zeta} \,\tanh\left(\sqrt{\zeta} \,\Lambda\right) = e^{-\mu} \sqrt{\epsilon} \,\tanh\left(e^{-\mu} \sqrt{\epsilon} \,\Lambda\right).
$$
Substituting the value of $\Lambda$, we get
\begin{align*}
U_{\rho}(\Lambda) 
&> e^{-\mu} \sqrt{\epsilon} \,\tanh\left(e^{-\mu} \sqrt{\epsilon} \cdot \frac{1}{\epsilon} \,\artanh\left(1 - \frac{\epsilon}{2}\right)\right)
> e^{-\mu} \sqrt{\epsilon} \,\tanh\left(e^{-\mu} \,\artanh\left(1 - \frac{\epsilon}{2}\right)\right).
\end{align*}
Since $\epsilon < 1$, we have
$$
\artanh\left(1 - \frac{\epsilon}{2}\right) \geq \artanh\left(\frac{1}{2}\right) = \frac{1}{2}\ln(3),
$$
and thus
$$
U_{\rho}(\Lambda) > e^{-\mu} \sqrt{\epsilon} \,\tanh\left(\frac{1}{2} e^{-\mu} \ln(3)\right).
$$
As $\overline{\Lambda} \geq \Lambda$ and the curvature remains less than $-\zeta$, $U_\rho$ increases further, so
$$
U_{\rho}(\overline{\Lambda}) > e^{-\mu} \sqrt{\epsilon} \,\tanh\left(\frac{1}{2} e^{-\mu} \ln(3)\right).
$$
\end{proof}

When the metric is deformed, the property that a generalized bubble is strongly convex may no longer hold. 
To circumvent this problem, the following proposition ensures that even if a geodesic in a metric $\tilde{g}_{\rho}$ on the universal covering returns to the lifting of a generalized bubble after leaving it, such a return cannot occur too quickly.

\begin{prop}\label{proposição nova}
    Let $\tilde{B}^{\rho}_{r}(p_{i}^{j})$ be a ball in $(\tilde{M},\tilde{g}_{\rho})$ containing $\tilde{B}^{j}_{\delta}(p_{i})$ and disjoint from any other lifting of a generalized bubble. Then any $\tilde{g}_{\rho}$-geodesic goes out from $\tilde{B}^{\rho}_{r}(p_{i}^{j})$ cannot return to $\tilde{B}^{\rho}_{r}(p_{i}^{j})$ without first encircling or intersecting a lifting of a generalized bubble.
\end{prop}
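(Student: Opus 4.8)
The plan is to argue by contradiction using the absence of focal points inside the original generalized bubble together with the fact that conformal deformation only \emph{increases} distances (Lemma~\ref{lema geodésica aumenta} and Corollary~\ref{corolario a distancia entre pontos aumenta}). Suppose some $\tilde{g}_{\rho}$-geodesic $\tilde\gamma_{\rho}$ leaves $\tilde B^{\rho}_{r}(p_{i}^{j})$ at a point $x$, later returns to $\tilde B^{\rho}_{r}(p_{i}^{j})$ at a point $y$, and in between never meets any lifting of a generalized bubble. Since $\tilde B^{j}_{\delta}(p_{i}) \subset \tilde B^{\rho}_{r}(p_{i}^{j})$, in the region traversed between $x$ and $y$ the metric $\tilde g_{\rho}$ coincides with... no, this is not quite right: the conformal factor $e^{2\rho w}$ need not be trivial outside the bubbles. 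So instead I would use that the curvature of $\tilde g_{\rho}$ is strictly negative outside $\bigcup_{i} B_{\delta}(p_{i})$ (Proposition~\ref{proposição da deformação}), hence in particular strictly negative along the whole arc of $\tilde\gamma_{\rho}$ between $x$ and $y$, and also strictly negative on any minimizing $\tilde g_{\rho}$-geodesic joining $x$ to $y$ that avoids the bubbles.

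The key step is then a standard convexity/uniqueness argument in a simply connected region of negative curvature. Consider the arc $\alpha$ of $\tilde\gamma_{\rho}$ from $x$ to $y$ together with a minimizing $\tilde g_{\rho}$-geodesic segment $\beta$ from $y$ back to $x$; if both $\alpha$ and $\beta$ stay in the open region $\tilde M \setminus \bigcup \tilde B^{l}_{\delta}(p_{m})$, which is a region all of whose geodesic bigons would have to bound a disk of negative curvature, then by Gauss--Bonnet (applied to the bigon $\alpha \cup \beta$) we get $2\pi = \int\!\!\int_{D} (-K_{\rho})\, dA + (\text{exterior angles}) > 0$ in a way that forces $\alpha = \beta$, i.e. $\tilde\gamma_{\rho}$ is itself minimizing and hence cannot re-enter a ball it has already exited without first wrapping around a "hole." More precisely: a complete simply connected surface of nonpositive curvature has unique geodesics between points, and here the relevant portion of $\tilde M$ is such a surface \emph{once we excise the bubbles}; the only way a geodesic can leave a convex ball and come back is to go around an excised region, which is exactly the alternative allowed in the statement.

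Concretely I would carry out the steps as follows. First, fix the excised open set $\Omega := \tilde M \setminus \bigcup_{m,l} \tilde B^{l}_{\delta}(p_{m})$ and note $K_{\tilde g_{\rho}} < 0$ on $\Omega$. Second, assume for contradiction that $\tilde\gamma_{\rho}$ exits $\tilde B^{\rho}_{r}(p_{i}^{j})$ and returns while its intermediate arc $\alpha$ lies in $\Omega$ and is homotopic rel endpoints in $\Omega$ to a path inside $\tilde B^{\rho}_{r}(p_{i}^{j})$ (this is what "without encircling or intersecting" means). Third, lift/restrict to the universal cover of $\Omega$, or simply observe that under this homotopy hypothesis the bigon formed by $\alpha$ and a short arc of $\partial \tilde B^{\rho}_{r}(p_{i}^{j})$ bounds a disk $D$ contained in $\Omega$; apply Gauss--Bonnet to $D$ and to the comparison with the geodesic of $\partial \tilde B^{\rho}_{r}$ to derive that $\int\!\!\int_{D} K_{\tilde g_{\rho}}\, dA > 0$, contradicting $K_{\tilde g_{\rho}} < 0$ on $\Omega \supset D$. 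Fourth, conclude that the return is impossible unless $\alpha$ meets or encircles some $\tilde B^{l}_{\delta}(p_{m})$.

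The main obstacle I anticipate is pinning down the precise topological meaning of "encircling or intersecting a lifting of a generalized bubble" and making the Gauss--Bonnet bigon argument airtight — in particular, identifying the correct disk $D$ (one must be careful that the region bounded by $\alpha$ and the boundary arc really is embedded and really does lie in $\Omega$, which uses that $\tilde B^{\rho}_{r}(p_{i}^{j})$ is disjoint from all other bubble lifts, as hypothesized) and controlling the turning/exterior angles where $\tilde\gamma_{\rho}$ meets $\partial \tilde B^{\rho}_{r}(p_{i}^{j})$. A cleaner route that sidesteps the angle bookkeeping would be to use convexity directly: the function $t \mapsto \tilde d_{\rho}(p_{i}^{j}, \tilde\gamma_{\rho}(t))$ is, on any subinterval where $\tilde\gamma_{\rho}$ stays in a region of negative curvature that is "seen convexly" from $p_{i}^{j}$, strictly convex, so it cannot leave the sublevel set $\{\tilde d_{\rho}(p_{i}^{j}, \cdot) < r\}$ and return to it; the only failure of this convexity comes from the geodesic passing on the far side of an excised bubble. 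Establishing that convexity in the deformed metric — where I can no longer appeal to strong convexity of the ball, only to negative curvature of $\Omega$ and to the distance-increasing property — is where the real work lies.
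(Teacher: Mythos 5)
Your approach is genuinely different from the paper's, and it leaves open exactly the quantity it introduces. The paper does not invoke Gauss--Bonnet at all; instead it runs an iterated construction with geodesics tangent to $\partial\tilde B^\rho_r(p_i^j)$: starting from the exit point $\tilde\gamma_\rho(b)$ it launches a tangent geodesic $\tilde\sigma_1$, shows (using only that two distinct geodesics in a simply connected region of strictly negative curvature meet at most once) that $\tilde\sigma_1$ cannot cross the arc of $\tilde\gamma_\rho$ again and hence must re-enter the ball, repeats the construction at the new tangency point, obtains a strictly nested family of compact regions, and derives the contradiction from compactness and a forced double intersection of two of the $\tilde\sigma_n$. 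That route never integrates curvature over a region and never needs any control on the geodesic curvature of the metric sphere.

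Your proposal, by contrast, has two gaps, and you name the second but not the first. In the geodesic-bigon version, with $\beta$ a minimizing $\tilde g_\rho$-geodesic from $y$ to $x$, you condition on ``both $\alpha$ and $\beta$ stay in $\Omega$,'' but since $x,y\in\partial\tilde B^\rho_r(p_i^j)$ and this ball contains the bubble $\tilde B^j_\delta(p_i)$, the minimizing geodesic between two boundary points will in general cut through the ball and through the bubble, where $K_{\tilde g_\rho}$ is not controlled; the case $\beta\not\subset\Omega$ is never handled. In the boundary-arc version (the one you develop further), the disk $D$ does lie in $\Omega$, but now $\beta$ is not a geodesic and Gauss--Bonnet gives
$\int\!\!\int_D K\,dA + \int_\beta\kappa_g\,ds + \theta_1 + \theta_2 = 2\pi$,
with $K<0$ on $D$ and $\theta_1+\theta_2<2\pi$, which forces $\int_\beta\kappa_g\,ds>0$; a contradiction therefore requires $\int_\beta\kappa_g\,ds\le 0$ with $D$ on the left, i.e.\ that $\partial\tilde B^\rho_r(p_i^j)$ is convex toward its center along $\beta$. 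That convexity is precisely what is not available: the ball contains the (possibly positively curved) bubble in the deformed metric, $r$ is imposed by the application rather than chosen small, and convexity of such balls is exactly a property the conformal deformation need not preserve. You correctly flag this as ``where the real work lies'' and then stop there --- but that step is the substance of the proposition, and the paper's tangent-geodesic/compactness argument is built precisely to sidestep it.
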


\begin{proof}
Let $\tilde{\gamma}_\rho$ be a $\tilde{g}_{\rho}$--geodesic leaving $\tilde{B}^{\rho}_{r}(p_{i}^{j})$ at $\tilde{\gamma}_\rho(b)$ and suppose, for contradiction, that $\tilde{\gamma}_\rho$ returns to $\tilde{B}^{\rho}_{r}(p_{i}^{j})$ at some point $\tilde{\gamma}_\rho(c)$ without first encircling or intersecting a lifting of a generalized bubble. Then the arc
$$
\big[\tilde{\gamma}_\rho(b),\tilde{\gamma}_\rho(c)\big]
$$
together with the boundary $\partial\tilde{B}^{\rho}_{r}(p_{i}^{j})$ bounds a region $R$ in which the curvature is strictly negative.

Consider a $\tilde{g}_{\rho}$--geodesic $\tilde{\sigma}_1$ which is tangent to $\partial\tilde{B}^{\rho}_{r}(p_{i}^{j})$ at the point $\tilde{\gamma}_\rho(b)$. Starting at that tangency point, $\tilde{\sigma}_1$ must either meet the segment $[\tilde{\gamma}_\rho(b),\tilde{\gamma}_\rho(c)]$ 1n $R$, or re-enter $\tilde{B}^{\rho}_{r}(p_{i}^{j})$ before meeting that segment. The first case cannot occur because, in a region of strictly negative curvature, two distinct geodesics meet at most once; since $\tilde{\sigma}_1$ and $\tilde{\gamma}_\rho$ already meet at $\tilde{\gamma}_\rho(b)$, they cannot intersect again in $R$. Hence the second case must occur, $\tilde{\sigma}_1$ intersects $\tilde{B}^{\rho}_{r}(p_{i}^{j})$ at some point $\tilde{\sigma}_1(x_1)$.

Now repeat the construction: let $\tilde{\sigma}_2$ be the $\tilde{g}_{\rho}$--geodesic tangent to $\partial\tilde{B}^{\rho}_{r}(p_{i}^{j})$ at $\tilde{\sigma}_1(x_1)$. By the same argument, $\tilde{\sigma}_2$ cannot meet the previous segment of $\tilde{\sigma}_1$ in the strictly negative region, then $\tilde{\sigma}_2$ must meet $\tilde{B}^{\rho}_{r}(p_{i}^{j})$ at some point $\tilde{\sigma}_2(x_2)$ with $\tilde{\sigma}_2(x_2)\neq\tilde{\sigma}_1(x_1)$.

Continuing inductively we obtain a sequence of geodesics $(\tilde{\sigma}_n)_{n\ge1}$ tangent to $\partial\tilde{B}^{\rho}_{r}(p_{i}^{j})$ and a sequence of compact regions
$$
R \supset R_1 \supset R_2 \supset \cdots
$$
each bounded by $\partial\tilde{B}^{\rho}_{r}(p_{i}^{j})$ together with a segment of some $\tilde{\sigma}_n$, with each inclusion strict. The intersection of these compact regions is nonempty; in fact its limit must be contained in $\partial\tilde{B}^{\rho}_{r}(p_{i}^{j})$. Consequently there is an accumulation point on $\partial\tilde{B}^{\rho}_{r}(p_{i}^{j})$ which is the limit of the tangency points $\tilde{\sigma}_n(x_n)$. Passing to a subsequence if necessary, the geodesics $\tilde{\sigma}_n$ converge (locally uniformly) to a geodesic $\tilde{\sigma}_\infty$ tangent to $\partial\tilde{B}^{\rho}_{r}(p_{i}^{j})$ at that limit point.

By construction, for $n$ large the geodesic $\tilde{\sigma}_\infty$ must intersect $\tilde{\sigma}_n$ in two distinct points, contradicting the fact that in a region of strictly negative curvature two distinct geodesics cannot have more than one intersection point. 

Therefore any $\tilde{g}_{\rho}$--geodesic leaving $\tilde{B}^{\rho}_{r}(p_{i}^{j})$ must either encircle or intersect some lifting of a generalized bubble before returning to $\tilde{B}^{\rho}_{r}(p_{i}^{j})$.
\end{proof}

\begin{figure}[H]
    \centering
    \includegraphics[scale=0.28]{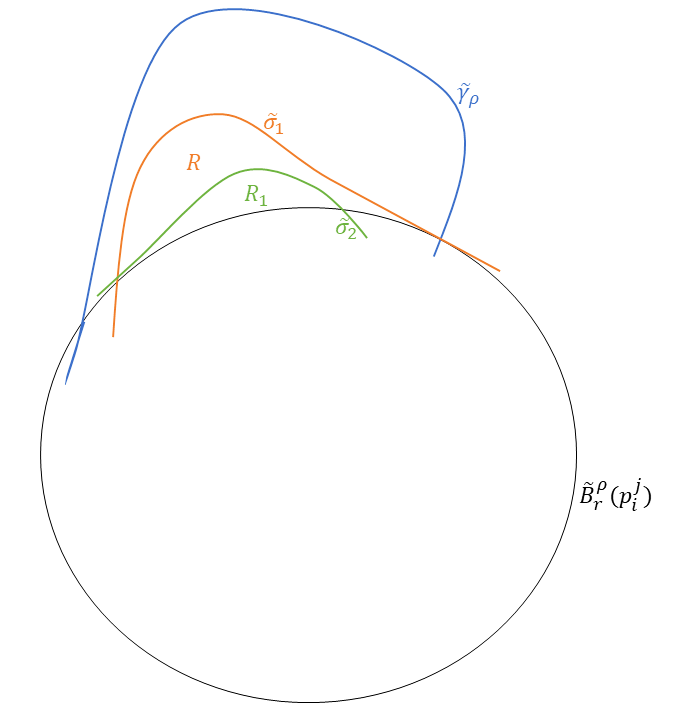}
    \caption{Illustration of the initial configuration of the geodesics and the regions involved in the sequence of arguments used in the proof of Proposition~\ref{proposição nova}.}
   
    \end{figure}
The key point in the proof of the previous lemma is the convexity of the geodesic's behavior in the region where the curvature is negative, because we know that the points of positive curvature are contained in the generalized bubbles. 
The next lemma, in contrast, is an immediate consequence of Lemma~\ref{lema comprimento da bolha}.

\begin{lema}\label{lema sai da bolha e não volta mais depois de um tempo}
Let $(\tilde M,\tilde g_{\rho})$ be the universal cover of $(M,g_{\rho})$ and assume $(M,g_{\rho})$ has no focal points. Fix $r>0$ and denote by $\tilde{B}_{r}^{\rho}(p_{i}^{j})$ the ball of radius $r$ with respect to $\tilde g_{\rho}$ centered at a lifting $p_{i}^{j}$. If $\tilde{\gamma}_{\rho}:\mathbb{R}\to\tilde M$ is a $\tilde g_{\rho}$--geodesic with
$$
\tilde{\gamma}_{\rho}(0)\in\tilde{B}_{r}^{\rho}(p_{i}^{j}),
$$
then
$$
\tilde{\gamma}_{\rho}(t)\notin\tilde{B}_{r}^{\rho}(p_{i}^{j})\qquad\text{for every }t\in\mathbb{R}\setminus(-2r,2r).
$$
In particular, if $\tilde{B}^{j}_{\delta}(p_{i})$ is a connected component of the lifting of a generalized bubble and
$$
\tilde{B}^{j}_{\delta}(p_{i})\subset\tilde{B}_{\delta e^{\mu}}^{\rho}(p_{i}^{j}),
$$
then any $\tilde g_{\rho}$--geodesic $\tilde{\gamma}_{\rho}$ with $\tilde{\gamma}_{\rho}(0)\in\tilde{B}^{j}_{\delta}(p_{i})$ satisfies
$$
\tilde{\gamma}_{\rho}(t)\notin\tilde{B}^{j}_{\delta}(p_{i})\qquad\text{for every }t\in\mathbb{R}\setminus(-2\delta e^{\mu},2\delta e^{\mu}),
$$
and hence, once $\tilde{\gamma}_{\rho}$ leaves $\tilde{B}_{2\delta e^{\mu}}^{\rho}(p_{i}^{j})$, it cannot return to $\tilde{B}^{j}_{\delta}(p_{i})$.
\end{lema}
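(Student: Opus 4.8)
The plan is to reduce everything to the fact that geodesics in the universal cover realize the distance. Since $(M,g_{\rho})$ is compact and has no focal points, it has no conjugate points; hence in $(\tilde{M},\tilde{g}_{\rho})$ the exponential map at every point is a diffeomorphism, so any two points of $\tilde{M}$ are joined by a unique geodesic and that geodesic is minimizing. Parametrizing $\tilde{\gamma}_{\rho}$ by arc length, this says precisely that $\tilde{d}_{\rho}(\tilde{\gamma}_{\rho}(s),\tilde{\gamma}_{\rho}(t))=|t-s|$ for all $s,t\in\mathbb{R}$; this identity is the only input needed.

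For the first assertion I would argue by contradiction. Suppose $\tilde{\gamma}_{\rho}(0)\in\tilde{B}_{r}^{\rho}(p_{i}^{j})$ and $\tilde{\gamma}_{\rho}(t)\in\tilde{B}_{r}^{\rho}(p_{i}^{j})$ for some $t$ with $|t|\ge 2r$. Then, by the triangle inequality in $(\tilde{M},\tilde{g}_{\rho})$ together with the distance identity,
$$
|t|=\tilde{d}_{\rho}(\tilde{\gamma}_{\rho}(0),\tilde{\gamma}_{\rho}(t))\le \tilde{d}_{\rho}(\tilde{\gamma}_{\rho}(0),p_{i}^{j})+\tilde{d}_{\rho}(p_{i}^{j},\tilde{\gamma}_{\rho}(t))<r+r=2r,
$$
a contradiction; hence $\tilde{\gamma}_{\rho}(t)\notin\tilde{B}_{r}^{\rho}(p_{i}^{j})$ for every $t\in\mathbb{R}\setminus(-2r,2r)$.

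For the ``in particular'' part, Lemma~\ref{lema comprimento da bolha} gives $\tilde{B}^{j}_{\delta}(p_{i})\subset\tilde{B}_{\delta e^{\mu}}^{\rho}(p_{i}^{j})$; applying the first assertion with $r=\delta e^{\mu}$ to a geodesic with $\tilde{\gamma}_{\rho}(0)\in\tilde{B}^{j}_{\delta}(p_{i})\subset\tilde{B}_{\delta e^{\mu}}^{\rho}(p_{i}^{j})$ then yields $\tilde{\gamma}_{\rho}(t)\notin\tilde{B}_{\delta e^{\mu}}^{\rho}(p_{i}^{j})\supset\tilde{B}^{j}_{\delta}(p_{i})$ for every $t$ with $|t|\ge 2\delta e^{\mu}$. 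For the final sentence, I would normalize $\tilde{\gamma}_{\rho}(0)\in\tilde{B}^{j}_{\delta}(p_{i})$ and suppose the geodesic leaves $\tilde{B}_{2\delta e^{\mu}}^{\rho}(p_{i}^{j})$ at some parameter $t_{0}>0$, i.e. $\tilde{d}_{\rho}(\tilde{\gamma}_{\rho}(t_{0}),p_{i}^{j})\ge 2\delta e^{\mu}$. Since $\tilde{d}_{\rho}(\tilde{\gamma}_{\rho}(0),p_{i}^{j})<\delta e^{\mu}$, the triangle inequality and the distance identity give $t_{0}\ge\delta e^{\mu}$. If the geodesic returned to $\tilde{B}^{j}_{\delta}(p_{i})$ at some $t_{1}>t_{0}$, then $\tilde{d}_{\rho}(\tilde{\gamma}_{\rho}(t_{1}),p_{i}^{j})<\delta e^{\mu}$ would force $t_{1}-t_{0}\ge\delta e^{\mu}$, hence $t_{1}\ge 2\delta e^{\mu}$, contradicting the bound $|t_{1}|<2\delta e^{\mu}$ already established; so $\tilde{\gamma}_{\rho}$ cannot return to $\tilde{B}^{j}_{\delta}(p_{i})$ once it has left $\tilde{B}_{2\delta e^{\mu}}^{\rho}(p_{i}^{j})$.

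The argument is elementary and I do not expect a serious obstacle. The only points requiring a little care are recalling that the no-focal-points hypothesis is exactly what upgrades ``no conjugate points'' to ``geodesics in $\tilde{M}$ realize the distance'' (so that the distances appearing in the triangle inequality may be replaced by differences of parameters), and the bookkeeping in the last sentence, where one must track the three nested balls $\tilde{B}^{j}_{\delta}(p_{i})\subset\tilde{B}_{\delta e^{\mu}}^{\rho}(p_{i}^{j})\subset\tilde{B}_{2\delta e^{\mu}}^{\rho}(p_{i}^{j})$ and the order of the relevant parameters along $\tilde{\gamma}_{\rho}$.
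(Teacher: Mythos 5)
Your proof is correct and takes essentially the same approach as the paper: both rest on the observation that, in the universal cover of a manifold without conjugate points, geodesics are globally minimizing, and then derive a contradiction from the triangle inequality through the center $p_i^j$. The only cosmetic differences are that the paper phrases the minimizing property via uniqueness of the connecting geodesic whereas you write the distance identity $\tilde d_\rho(\tilde\gamma_\rho(s),\tilde\gamma_\rho(t))=|t-s|$ directly, and that you spell out the ``in particular'' and final sentences, which the paper leaves implicit.
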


\begin{proof}
Let $\tilde{\gamma}_{\rho}$ be a $\tilde g_{\rho}$--geodesic with $\tilde{\gamma}_{\rho}(0)\in\tilde{B}_{r}^{\rho}(p_{i}^{j})$. Suppose, for contradiction, that there exists $t\ge 2r$ such that $\tilde{\gamma}_{\rho}(t)\in\tilde{B}_{r}^{\rho}(p_{i}^{j})$ (the case $t\le -2r$ is analogous). Set
$$
\tilde q_1:=\tilde{\gamma}_{\rho}(0),\qquad \tilde q_2:=\tilde{\gamma}_{\rho}(t).
$$
By the triangle inequality there is a piecewise geodesic path from $\tilde q_1$ to $\tilde q_2$ passing through the center $p_{i}^{j}$ whose length is at most
$$
d_{\tilde g_{\rho}}(\tilde q_1,p_{i}^{j})+d_{\tilde g_{\rho}}(p_{i}^{j},\tilde q_2)<r+r=2r.
$$
Hence the length of the minimizing geodesic joining $\tilde q_1$ and $\tilde q_2$ is strictly less than $2r$.

On the other hand, the segment of $\tilde{\gamma}_{\rho}$ between $\tilde q_1$ and $\tilde q_2$ is a geodesic of length $t\ge 2r$. Since $(M,g_{\rho})$ has no focal points (and therefore no conjugate points on the universal cover), minimizing geodesics between two given points are unique; consequently the minimizing geodesic between $\tilde q_1$ and $\tilde q_2$ must coincide with the geodesic segment of $\tilde{\gamma}_{\rho}$. Then $t<2r$, contradicting $t\ge2r$.
\end{proof}

Our next goal is to prove the following proposition, which indeed represents a key step toward establishing Theorem~\ref{Teorema 3 introdução}.

\begin{prop}\label{prop deformação sem pontos locais}
Consider $0<\epsilon<1$, $\delta>0$, and $\Lambda:=\frac{1}{\sqrt{\epsilon}}\,\mathrm{artanh}\left(1-\frac{\epsilon}{2}\right)$.
Suppose that $(M,g)\in \mathcal{M}(\delta, k,\epsilon,\Lambda)$ and that $\epsilon<\frac{-2\pi\chi(M)}{vol(M)}$.
Then, there exists a function $w\in C^{\infty}(M)$ with $\min_{M}w=0$ such that if the maximum curvature $K^{+}$ of $(M,g)$ satisfies
$$
K^{+}
<\frac{\sqrt{\epsilon}}{4e^{2\mu}\,\delta}\left[
  \tanh\!\left(e^{-\mu}\tfrac{1}{3}\ln3\right)
  -\sqrt{\epsilon}\,e^{-\mu}\,\tanh^{2}\!\left(e^{-\mu}\tfrac{1}{3}\ln3\right)\,(4e^{\mu}\delta+1)
\right],
$$
where $\mu:=\max_{M}w$, then the conformal family of metrics
$$
g_{\rho}:=e^{2\rho w}g,\quad \rho\in[0,1],
$$
consists entirely of metrics without focal points.
\end{prop}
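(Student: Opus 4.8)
The plan is to run, for every $\rho\in[0,1]$ simultaneously, the same Riccati bootstrap argument used in the proof of Theorem~\ref{Teorema 1 introdução}, but now with the deformed curvature bound $K_{\rho}<-\zeta$ with $\zeta=e^{-2\mu}\epsilon$ outside the bubbles, with the deformed bubble size $\delta e^{\mu}$ in place of $\delta$ (via Lemma~\ref{lema comprimento da bolha}), and with the subtlety that the generalized bubbles may fail to be strongly convex for $\rho>0$. Concretely, fix $\rho\in[0,1]$ and a $\tilde g_{\rho}$--geodesic $\tilde\gamma_{\rho}$ in the universal cover; by Proposition~\ref{prop deformação sem pontos locais}'s hypotheses and Lemma~\ref{lema distância entre as bolhas aumenta}, the liftings of the generalized bubbles stay at $\tilde g_{\rho}$--distance $>\Lambda$, so between two successive visits of $\tilde\gamma_{\rho}$ to the $\delta e^{\mu}$--enlarged bubbles the geodesic spends $g_{\rho}$--arclength at least $\Lambda$ in the region where $K_{\rho}<-\zeta$. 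Then Lemma~\ref{lema de quanto cresce a solução da equação de Riccati a partir da escolha de Lambda} gives that any Riccati solution entering such a gap nonnegative leaves it $>e^{-\mu}\sqrt{\epsilon}\,\tanh(\tfrac12 e^{-\mu}\ln 3)$; I will actually need the slightly weaker constant $\tanh(e^{-\mu}\tfrac13\ln3)$ appearing in the statement, which is what survives after the in-bubble loss is subtracted, so I keep track of that margin.

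The first real step is to replace Lemma~\ref{lema sai positivo da bolha e chega grande na próxima} by its $\rho$--version: inside an enlarged bubble the geodesic segment has $g_{\rho}$--length at most $2\delta e^{\mu}$ (this is exactly Lemma~\ref{lema sai da bolha e não volta mais depois de um tempo} together with Lemma~\ref{lema comprimento da bolha}, which is why the factor $4e^{\mu}\delta$ rather than $2\delta$ shows up: the relevant ``return'' ball has radius $2\delta e^{\mu}$), while $K_{\rho}\le K^{+}$ pointwise, since conformal scaling by $e^{-2\rho w}\le 1$ only decreases curvature on the positively curved set (Proposition~\ref{proposição da deformação}). Integrating $U_{\rho}'=-U_{\rho}^{2}-K_{\rho}$ from the point $c$ where $U_{\rho}$ would first drop to the threshold $\tau:=e^{-\mu}\sqrt{\epsilon}\,\tanh(e^{-\mu}\tfrac13\ln3)$, exactly as in the cited lemma, gives
$$
U_{\rho}(t)\ge \tau-\bigl(\tau^{2}+K^{+}\bigr)\,(t-c),\qquad t\in[c,c+4\delta e^{\mu}],
$$
so positivity across the enlarged bubble is guaranteed as soon as $(\tau^{2}+K^{+})\,4\delta e^{\mu}<\tau$, i.e.
$$
K^{+}<\frac{\tau}{4\delta e^{\mu}}-\tau^{2}
=\frac{\sqrt{\epsilon}}{4e^{2\mu}\delta}\Bigl[\tanh\!\bigl(e^{-\mu}\tfrac13\ln3\bigr)-\sqrt{\epsilon}\,e^{-\mu}\tanh^{2}\!\bigl(e^{-\mu}\tfrac13\ln3\bigr)\,\bigl(4e^{\mu}\delta\bigr)\Bigr],
$$
and the extra $(4e^{\mu}\delta+1)$ rather than $4e^{\mu}\delta$ in the displayed hypothesis is the stronger bound which additionally forces $U_{\rho}$ to re-emerge above a fixed positive constant (one pays one more copy of $\tau^{2}$ to absorb the loss and still exit $\ge$ some $c_{0}>0$), closing the induction. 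Running the induction forward along $\tilde\gamma_{\rho}$ starting from a Jacobi field vanishing at the end of the first bubble visit, one gets a Riccati solution that is eventually $\ge c_{0}>0$; then by Lemma~\ref{lema da convergência da solução da equação de riccati não negativa} (Arzelà–Ascoli on solutions with asymptotes marching to $-\infty$) one extracts a global solution $\ge 0$, hence $U^{u}_{\rho}\ge 0$; the time-reversed argument gives $U^{s}_{\rho}\le 0$. By Lemma~\ref{lema instável não negativo e estável não positivo}, $(M,g_{\rho})$ has no focal points, for every $\rho\in[0,1]$.

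There is one place where the $\rho=0$ proof does not transcribe verbatim and which I expect to be the main obstacle: for $\rho>0$ the enlarged bubbles $B^{\rho}_{\delta e^{\mu}}(p_{i})$ need not be strongly convex, so a priori a $\tilde g_{\rho}$--geodesic could leave such a ball and re-enter it after only a short excursion, destroying the ``at least $\Lambda$ in negative curvature between visits'' estimate that the bootstrap rests on. This is exactly what Proposition~\ref{proposição nova} and Lemma~\ref{lema sai da bolha e não volta mais depois de um tempo} are for: the latter shows (using only absence of focal/conjugate points in $(M,g_{\rho})$, which is available because we are proving it inductively in $\rho$ along a connected interval, or alternatively by a bootstrap/continuity argument) that once the geodesic exits the radius-$2\delta e^{\mu}$ ball it never returns to the actual bubble $\tilde B^{j}_{\delta}(p_{i})$; and the former shows that a return to a slightly larger ball is impossible without first meeting another lifted bubble, whose distance away is $>\Lambda$. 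I would therefore phrase the gap-length lower bound in terms of passages between \emph{distinct} lifted generalized bubbles (distance $>\Lambda$ by Lemma~\ref{lema distância entre as bolhas aumenta}), treating consecutive returns to the same bubble as part of a single ``bubble event'' of total $g_{\rho}$--length controlled by $4\delta e^{\mu}$, which is precisely the quantity that enters the $K^{+}$ bound. Handling this bookkeeping cleanly — making sure the Riccati solution stays positive not just across one bubble but across the whole (finite, by the above) cluster of near-returns before the next genuine $\Lambda$--gap — is the technical heart of the argument; everything else is the comparison computation above plus the compactness extraction already isolated in Lemma~\ref{lema da convergência da solução da equação de riccati não negativa}.
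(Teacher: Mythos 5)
Your outline correctly identifies the right conformal factor $w$, the transfer of the Riccati bootstrap to the deformed curvature bound $K_{\rho}<-e^{-2\mu}\epsilon$, the role of the enlarged bubble radius $\delta e^{\mu}$, and the algebra producing the threshold $K^{+}<\tau/(4\delta e^{\mu})-\tau^{2}$ with $\tau=e^{-\mu}\sqrt{\epsilon}\,\tanh(e^{-\mu}\tfrac13\ln3)$; you also correctly read the extra $+1$ in the coefficient as what makes the exit value stay above a fixed positive constant so the induction along the geodesic closes. You also correctly isolate the real obstacle: loss of strong convexity for $\rho>0$, so one cannot a priori bound how long a $\tilde g_{\rho}$--geodesic dwells near a lifted bubble, nor rule out quick returns.

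But your resolution of that obstacle is circular, and this is where the proof actually lives. You invoke Lemma~\ref{lema sai da bolha e não volta mais depois de um tempo} to bound the dwell time by $4\delta e^{\mu}$ and to forbid returns, and you explicitly note this lemma requires $(M,g_{\rho})$ to have no focal points; you then wave this in by saying it ``is available because we are proving it inductively in $\rho$ along a connected interval, or alternatively by a bootstrap/continuity argument.'' That sentence is precisely where all the difficulty is concentrated, and nothing in your proposal supplies it. There is no openness-and-closedness argument in the paper, and none would be easy here, because the set of $\rho$ for which $g_{\rho}$ has no focal points is not obviously open: passing to the boundary of the no-focal-point set could give focal points in the limit, and you cannot bound the dwell time for $g_{\rho}$ by the dwell time for $g_{\overline{\rho}}$ without a quantitative comparison. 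The paper's mechanism is Lemma~\ref{lema não tem pontos focais na pertubação}: for $\rho$ in $[\overline{\rho},\overline{\rho}+r]$, compare the $g_{\rho}$--geodesic to the $g_{\overline{\rho}}$--geodesic with matched initial data, use the known no-focal-points property of $g_{\overline{\rho}}$ (hence Lemma~\ref{lema sai da bolha e não volta mais depois de um tempo} legitimately applies to $g_{\overline{\rho}}$) together with continuous dependence of geodesics and of Riccati solutions on $\rho$ over the bounded time window $[x-4\delta e^{\mu}-\Lambda,\,x+4\delta e^{\mu}+\Lambda]$, and deduce the required Riccati lower bound for the $g_{\rho}$--geodesic with a quantitative loss absorbed by the margin $(4e^{\mu}\delta+1)$ in the $K^{+}$ hypothesis. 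Crucially the paper also verifies that the step size $r$ can be taken \emph{uniform} in $\overline{\rho}\in[0,1]$, so that finitely many steps reach $\rho=1$; without that uniformity the induction could stall before $\rho=1$. Your ``bubble event'' bookkeeping is exactly what Proposition~\ref{proposição nova} plus the $g_{\overline{\rho}}$--to--$g_{\rho}$ comparison handles, but as written your proposal asserts the conclusion of Lemma~\ref{lema sai da bolha e não volta mais depois de um tempo} for $g_{\rho}$ before having established the hypothesis for $g_{\rho}$, and this cannot be repaired without reconstructing the perturbative argument of Lemma~\ref{lema não tem pontos focais na pertubação}.
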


To avoid an overly lengthy argument, we begin by proving the following technical lemma, which will play a crucial role in the proof of the proposition.

\begin{lema}\label{lema não tem pontos focais na pertubação}
Consider $\zeta_{1}=e^{-2\mu}\epsilon$. There exists $r>0$ such that if
  \begin{itemize}
      \item $(M,g_{\overline{\rho}})$ is free of focal points,
      \item for each generalized bubble $B_{\delta}(p_{i})$ and $g_{\overline{\rho}}-$geodesic $\gamma_{\overline{\rho}}$ with $\gamma_{\overline{\rho}}(0)\in B_{\delta}(p_{i})$, if $\tilde{\gamma}_{\overline{\rho}}$ is the lifting of $\gamma_{\overline{\rho}}$ such that $\tilde{\gamma}_{\overline{\rho}}(0)\in \tilde{B}^{j}_{\delta}(p_{i})$, and if $(a,b)$ is the connected component of $\tilde{\gamma}_{\overline{\rho}}\cap \tilde{B}^{\rho}_{2\delta e^{\mu}}(p^{j}_{i})$ containing $\tilde{\gamma}_{\overline{\rho}}(0)$,
      if $U$ is a solution of the Riccati equation in $\gamma_{\overline{\rho}}$ such that $U(c)>e^{-\mu}\sqrt{\epsilon}\tanh(e^{-\mu}\frac{1}{3}\ln(3))$ for some $c\in(a,b)$ then $U(t)>e^{-2\mu}\epsilon\tanh^2(e^{-\mu}\frac{1}{3}\ln(3))$ for all $t\in(a,b)$,
  \end{itemize}
  then $(M,g_{\rho})$ is free of focal points if $\rho\in[\overline{\rho},\rho_{1}]$ with $\rho_{1}=\min\{\overline{\rho}+r,1\}$.
\end{lema}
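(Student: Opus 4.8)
The plan is to run a continuation (bootstrap) argument along the conformal parameter $\rho$, using the Riccati comparison estimates already developed. Starting from the metric $g_{\overline\rho}$, which is assumed free of focal points, I would like to show that the no-focal-points property persists for all $\rho$ in a small interval $[\overline\rho,\rho_1]$ of uniform length $r$ (independent of $\overline\rho$), so that finitely many such steps cover $[0,1]$. By Lemma~\ref{lema instável não negativo e estável não positivo}, it suffices to prove that along every $g_\rho$-geodesic $\gamma_\rho$ there is a Riccati solution that is everywhere $\ge 0$ and one that is everywhere $\le 0$; by the Arzel\`a--Ascoli mechanism of Lemma~\ref{lema da convergência da solução da equação de riccati não negativa}, this reduces to producing, for each blow-up time $t_n\to-\infty$ along $\gamma_\rho$, a Riccati solution $U_n$ with $U_n>0$ on $(t_n,\infty)$ (and symmetrically for the stable side). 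So the heart of the matter is a propagation statement: if a Riccati solution along $\gamma_\rho$ is positive when the geodesic leaves (a neighbourhood of) a generalized bubble, it stays positive, and in fact regains a definite lower bound, by the time it reaches the next bubble.

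The key geometric input is that, although strong convexity of the bubbles may be lost after deformation, Proposition~\ref{proposição nova} and Lemma~\ref{lema sai da bolha e não volta mais depois de um tempo} together guarantee that once a $g_\rho$-geodesic leaves the enlarged ball $\tilde B^\rho_{2\delta e^\mu}(p_i^j)$ it cannot return to the bubble $\tilde B^j_\delta(p_i)$ without meeting another bubble first, and that the time spent inside $\tilde B^\rho_{2\delta e^\mu}(p_i^j)$ is at most $4\delta e^\mu$. Thus the geodesic alternates between ``inside enlarged-bubble'' intervals of length $\le 4\delta e^\mu$ and ``outside'' intervals on which $K_\rho<-\zeta_1=-e^{-2\mu}\epsilon$; moreover, by Lemma~\ref{lema distância entre as bolhas aumenta}, the $g_\rho$-distance between distinct bubble lifts is still $\ge\Lambda$, so the outside intervals have length $\ge\Lambda$. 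On an outside interval Lemma~\ref{lema de quanto cresce a solução da equação de Riccati a partir da escolha de Lambda} upgrades any nonnegative Riccati value at the start to a value exceeding $e^{-\mu}\sqrt\epsilon\tanh(e^{-\mu}\tfrac13\ln 3)$ at the end. On an inside interval we use the crude integral estimate exactly as in Lemma~\ref{lema sai positivo da bolha e chega grande na próxima}: from the Riccati equation $U'=-U^2-K$, integrating over an interval of length at most $4\delta e^\mu$ and bounding $U^2$ by $(e^{-\mu}\sqrt\epsilon\tanh(\cdots))^2$-type quantities and $K$ by $K^+$, one finds that the stated upper bound on $K^+$ (whose shape $\tfrac{\sqrt\epsilon}{4e^{2\mu}\delta}[\tanh(\cdots)-\sqrt\epsilon e^{-\mu}\tanh^2(\cdots)(4e^\mu\delta+1)]$ is precisely what makes the loss term smaller than the gain) forces $U>e^{-2\mu}\epsilon\tanh^2(e^{-\mu}\tfrac13\ln 3)>0$ throughout the inside interval. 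This is exactly the second bulleted hypothesis of the lemma, so one inductively chains these estimates along the whole geodesic, producing the desired everywhere-positive solution, and symmetrically the everywhere-negative one.

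The main obstacle, and the reason this is stated as a lemma rather than the proposition itself, is the book-keeping around the enlarged balls $\tilde B^\rho_{2\delta e^\mu}(p_i^j)$ and the fact that after deformation these enlarged balls may overlap neighbouring bubbles (the phenomenon flagged in the remark after Lemma~\ref{lema comprimento da bolha}: possibly $B^\rho_{\delta e^\mu}(q_1)\cap B_\delta(q_2)\ne\varnothing$). One must choose the radius $r$ of the step in $\rho$, and the auxiliary radii, small enough that $\tilde B^\rho_{2\delta e^\mu}(p_i^j)$ remains disjoint from every other bubble lift and still well inside the region controlled by $\Lambda$, uniformly in $\rho\in[\overline\rho,\rho_1]$ — this is where the continuity of the metric in $\rho$ and the uniform separation $\Lambda$ enter, and where the smallness of $\delta$ (already imposed through the positivity of the $K^+$-bound) is used. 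A second, more technical point is to verify that the hypotheses of the lemma are \emph{self-improving}: having shown $(M,g_\rho)$ is free of focal points for $\rho$ slightly bigger than $\overline\rho$, one must re-derive, for those $\rho$, the second bulleted condition (the Riccati regeneration inside the enlarged bubble), which again follows from the same integral estimate together with Lemma~\ref{lema de quanto cresce a solução da equação de Riccati a partir da escolha de Lambda} applied to $g_\rho$; the point is that all constants appearing are controlled by $\mu=\max_M w$ and $\zeta_1=e^{-2\mu}\epsilon$, which do not depend on $\rho$, so the induction closes with a uniform step size $r$. I would therefore structure the write-up as: (i) fix $r$ from the geometric disjointness requirement; (ii) along a fixed $g_\rho$-geodesic, set up the alternating decomposition into inside/outside intervals via Proposition~\ref{proposição nova} and Lemma~\ref{lema sai da bolha e não volta mais depois de um tempo}; (iii) run the two-estimate induction (outside: Lemma~\ref{lema de quanto cresce a solução da equação de Riccati a partir da escolha de Lambda}; inside: the integral estimate under the $K^+$-hypothesis) to get an everywhere-nonnegative Riccati solution via Lemma~\ref{lema da convergência da solução da equação de riccati não negativa}; (iv) repeat with reversed time for the nonpositive solution; (v) conclude with Lemma~\ref{lema instável não negativo e estável não positivo}.
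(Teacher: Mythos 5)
Your high-level picture is right — bootstrap in $\rho$ with a uniform step $r$, alternating inside/outside intervals, Riccati chaining, then Lemma~\ref{lema da convergência da solução da equação de riccati não negativa} and Lemma~\ref{lema instável não negativo e estável não positivo} — but the central mechanism is missing, and as written the argument is circular. You propose to control the inside intervals of a \emph{$g_\rho$-geodesic} directly: you invoke Lemma~\ref{lema sai da bolha e não volta mais depois de um tempo} to bound the time spent in the enlarged ball $\tilde B^\rho_{2\delta e^\mu}(p_i^j)$, and you run the integral estimate of Lemma~\ref{lema sai positivo da bolha e chega grande na próxima} on that interval. But Lemma~\ref{lema sai da bolha e não volta mais depois de um tempo} \emph{assumes} $(M,g_\rho)$ has no focal points — which is exactly the conclusion you are trying to prove. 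Without that, you have no a priori bound on the length of the ``inside'' interval for a $g_\rho$-geodesic, so the integral estimate does not close. Note also that the second bullet of the lemma is a hypothesis about \emph{$g_{\overline\rho}$}-geodesics, not about $g_\rho$-geodesics, and the $K^+$-bound you invoke is not part of this lemma's hypotheses at all (it lives in Proposition~\ref{prop deformação sem pontos locais}, which is where the bullet gets verified); so treating that bullet as something you re-derive for $g_\rho$ inside this proof conflates the lemma with the proposition it serves.

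The paper breaks the circularity by a perturbative comparison that your outline omits. Given a $g_\rho$-geodesic entering a bubble, the paper builds a companion $g_{\overline\rho}$-geodesic with the same initial point and (renormalized) initial direction, applies Lemma~\ref{lema sai da bolha e não volta mais depois de um tempo} and the second bulleted hypothesis to \emph{that} geodesic (legitimate, since $g_{\overline\rho}$ is known to be focal-point free), and then invokes $C^1$-continuous dependence of geodesics and of Riccati solutions on the conformal parameter to transfer the exit time and the Riccati lower bound, up to an error $<\tfrac14 e^{-2\mu}\epsilon\tanh^2(e^{-\mu}\tfrac13\ln 3)$, to the $g_\rho$-geodesic. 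The uniform step size $r$ is exactly the threshold making these two perturbation errors small enough (and keeping the relevant tracking interval of length $\sim 4\delta e^\mu+\Lambda$ under control); it is not chosen from ``geometric disjointness'' as in your step (i). Your step (iii) therefore needs to be replaced by this two-metric comparison, after which the regeneration outside the bubbles via Lemma~\ref{lema de quanto cresce a solução da equação de Riccati a partir da escolha de Lambda} and the final appeal to Lemmas~\ref{lema da convergência da solução da equação de riccati não negativa} and~\ref{lema instável não negativo e estável não positivo} go through as you describe.
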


\begin{proof}
Let $\gamma_{\rho}$ be a $g_{\rho}-$geodesic. The proof proceeds in the following stages:

  \begin{enumerate}
      \item \textbf{Intersection of geodesics and generalized bubbles in the universal covering.}
      
      If $\gamma_{\rho}$ does not intersect a generalized bubble, it has no focal points. Suppose $\gamma_{\rho}(s_{1})$ is in a generalized bubble $B_{\delta}(p_{i})$ for some $s_1$. Let $\tilde{\gamma}_{\rho}$ be its lifting in $(\tilde{M},\tilde{g})$ such that $\tilde{\gamma}_{\rho}(s_{1})\in \tilde{B}_{\delta}^{j}(p_{i})$.

      Consider a $g_{\overline{\rho}}-$geodesic $\gamma_{\overline{\rho}}^{s_{1}}$ with initial conditions $$\gamma_{\overline{\rho}}^{s_{1}}(s_{1})=\gamma_{\rho}(s_{1}) \, \text{ and } \, \gamma_{\overline{\rho}}'(x)=\frac{\gamma_{\rho}'(x)}{||\gamma_{\rho}'(x)||_{\overline{\rho}}}.$$
      Let $\tilde{\gamma}^{s_{1}}_{\overline{\rho}}$ be its lifting in $(\tilde{M},\tilde{g_{\overline{\rho}}})$ such that $\tilde{\gamma}_{\rho}(s_{1})=\tilde{\gamma}_{\overline{\rho}}(s_{1})$. Since $(M,g_{\overline{\rho}})$ is free of focal points, $\tilde{\gamma}^{s_{1}}_{\overline{\rho}}$ is not contained in any ball and, by Lemma \ref{lema sai da bolha e não volta mais depois de um tempo}, if it leaves $\tilde{B}^{\overline{\rho}}_{2\delta e^{\mu}}(p_{i}^{j})$, it does not return to $\tilde{B}^{\overline{\rho}}_{\delta e^{\mu}}(p_{i}^{j})$. 
      
      Thus, there exists a unique connected component $\tilde{\gamma}^{s_{1}}_{\overline{\rho}}\big|_{(a,b)}$ of $\tilde{\gamma}^{s_{1}}_{\overline{\rho}}\cap \tilde{B}^{\overline{\rho}}_{2\delta e^{\mu}}(p_{i}^{j})$ that contains $s_1$ and points of $\tilde{B}_{\delta}^{j}(p_{i})$.

      \item \textbf{Estimate of the Riccati equation solution for $g_{\overline{\rho}}$ in the generalized bubble.}
      
      Let $U^{s_{1}}_{\rho}(t)$ be a solution of the Riccati equation in $\gamma_{\rho}$ such that 
      $$U^{s_{1}}_{\rho} (s_{1})>e^{-\mu}\sqrt{\epsilon}\tanh(e^{-\mu}\frac{1}{2}\ln(3)).$$
      Let $U_{\overline{\rho}}^{s_{1}}$ be the Riccati solution in $\gamma_{\overline{\rho}}^{s_{1}}$ such that $U_{\overline{\rho}}^{s_{1}}(s_{1})=U^{s_{1}}_{\rho}(s_{1})$.
      By hypothesis, $U_{\overline{\rho}}^{s_{1}}(t)>e^{-2\mu}\epsilon\tanh^2(e^{-\mu}\frac{1}{3}\ln(3))$ for all $t\in(a,b)$. Furthermore, since $\tilde{\gamma}^{s_{1}}$ does not return to $\tilde{B}^{\overline{\rho}}_{\delta e^{\mu}}(p_{i}^{j})$ after $(a,b)$ and the distance between generalized bubbles is greater than $\Lambda$, by the behavior of the solution of the Riccati equation in the region of curvature less than $-\zeta$, it follows that $$U_{\overline{\rho}}^{s_{1}}(t)>e^{-2\mu}\epsilon\tanh^2(e^{-\mu}\frac{1}{3}\ln(3))$$
      for all $t\in(a,b+\Lambda)$. Fix $\lambda<\frac{\Lambda}{2}$.

      \item \textbf{Intersection between $\tilde{\gamma}_{\rho}$ and $\partial\tilde{B}_{\delta}^{j}(p_{i})$.}
      
      By continuous dependence of geodesics, there exists $r_{1}>0$ such that for any $\overline{\rho}\in[0,1]$ and $r_{1}\ge r'>0$, if $\gamma_{\overline{\rho}}$ is a $g_{\overline{\rho}}-$geodesic and $\gamma_{\overline{\rho}+r'}$ is a $g_{\overline{\rho}+r}-$geodesic with $\gamma_{\overline{\rho}+r'}(x)=\gamma_{\overline{\rho}}(x) \, \text{ and } \, \gamma_{\overline{\rho}+r'}'(x)=\frac{\gamma_{\overline{\rho}}'(x)}{||\gamma_{\overline{\rho}}'(x)||_{\overline{\rho}+r'}}$, then
     $$d_{\overline{\rho }}(\tilde{\gamma}_{\rho_{\overline{\rho }+r}}(t),\tilde{\gamma}_{\overline{\rho }}(t))\le d_{1}(\tilde{\gamma}_{\overline{\rho }+r}(t),\tilde{\gamma}_{\overline{\rho }}(t))< \frac{\lambda}{3}$$
      for all $t\in[x-4\delta e^{\mu}-\Lambda,x+4\delta e^{\mu}+\Lambda]$. This interval was chosen to ensure that $\tilde{\gamma}_{\overline{\rho}}$ does not return to the neighborhood of $\tilde{B}_{\delta}^{j}(p_{i})$.
      Thus, if $\rho\in [\overline{\rho},\rho_{l}]$ with $\rho_{l}=\min\{\overline{\rho}+r_{1},1\}$, there exists $\overline{s_{2}}\in (s_{1},b+\lambda)$ such that $\tilde{\gamma}_{\rho}(\overline{s}_{2})\in \partial\tilde{B}^{\overline{\rho}}_{2\delta e^{\mu}+\lambda}(p_{i}^{j})$.
      Note that $d(\tilde{\gamma}^{s_1}_{\overline{\rho}}(s_{1}\pm(4\delta e^{\mu}+2\lambda)),\tilde{B}^{j}_{\delta}(p_{i}))>\lambda$. 
      
      This also holds for $t\not\in[s_{1}-4\delta e^{\mu}-2\lambda,s_{1}+4\delta e^{\mu}+2\lambda]$.
      Let $s_{2}$ be the largest $s\in (s_{1},\overline{s}_{2})$ such that $\tilde{\gamma}_{\rho}(s)\in\partial \tilde{B}_{\delta}^{j}(p_{i})$, so $\gamma_{\rho}(s_{2})\in \partial B_{\delta}(p_{i})$.

      \item \textbf{Estimate of the Riccati equation solution for $g_{\rho}$ in the generalized bubble.}
      
      By continuous dependence of geodesics and Riccati solutions, there exists $r_{2}>0$ such that for any $\overline{\rho}\in[0,1]$ and $r_{2}\ge r'>0$, if $\gamma_{\overline{\rho}}$ and $\gamma_{\overline{\rho}+r'}$ are geodesics as before, and $U_{\overline{\rho}},U_{\overline{\rho}+r'}$ are their respective Riccati solutions with $U_{\overline{\rho}}(x )=U_{\overline{\rho}+r'}(x)$, then
      $$|U_{\overline{\rho}}(y)-U_{\overline{\rho}+r'}(y)|<\frac{e^{-2\mu}\epsilon\tanh^2(e^{-\mu}\frac{1}{3}\ln(3))}{4}$$
      for $y\in [x-4\delta e^{\mu}-\Lambda,x+4\delta e^{\mu}+\Lambda]$, that is, greater than the length of any $g_{\overline{\rho}}$-geodesic segment in a generalized bubble. Hence, if $\rho_{1}=\min \{\overline{\rho}+r_{1},\overline{\rho}+r_{2},1\}$ and $\gamma_{\rho} (s_2)$ is the exit point of $\gamma_{\rho}$ from $B_{\delta}(p_{i})$, then 
      $$U_{\rho}^{s_1}(s_{2})>\frac{3e^{-2\mu}\epsilon\tanh^2(e^{-\mu}\frac{1}{3}\ln(3))}{4}.$$

      \item \textbf{Estimation of the solution of the Riccati equation for $g_{\rho}$ between generalized bubbles.}

If $\tilde{\gamma}_{\rho}$ does not intersect again the lifting of a generalized bubble, we are done, since the geodesic $\gamma_{\rho}$ remains in the region of negative curvature. Then, suppose that $\tilde{\gamma}_{\rho}$ intersects once more the lifting of a generalized bubble $\tilde{B}^t_{\delta}(p_{l})$. 

By Proposition \ref{proposição nova}, $\tilde{\gamma}_{\rho}$ can only intersect $\tilde{B}_{\delta}^{j}(p_{i})$ again if it first intersects or encircles another lifting of a generalized bubble. Moreover, since the distance between the liftings of the generalized bubbles does not decrease as the parameter $\rho$ increases, and since
$$
U_{\rho}^{s_1}(s_{2})>\frac{3e^{-2\mu}\epsilon\tanh^2\!\left(e^{-\mu}\tfrac{1}{3}\ln(3)\right)}{4}>0,
$$
by Lemma \ref{lema de quanto cresce a solução da equação de Riccati a partir da escolha de Lambda} we have
$$
U_{\rho}^{s_1}> e^{-\mu} \sqrt{\epsilon} \,\tanh\!\left(\tfrac{1}{2} e^{-\mu} \ln(3)\right) 
> e^{-\mu} \sqrt{\epsilon} \,\tanh\!\left(\tfrac{1}{3} e^{-\mu} \ln(3)\right)
$$
    when re-entering a lifting of a generalized bubble. This allows us to restart the process from Step~1.

      \item \textbf{$(M,g_{\rho})$ is free of focal points.}
      
      Inductively, $U^{s_{1}}_{\rho}(t)>0$ for $t>s_{1}$. The choice of $s_{1}$ is arbitrary. If we choose a starting point outside a generalized bubble, the lower bound condition still holds when the geodesic enters it.
      Then by Lemma \ref{lema da convergência da solução da equação de riccati não negativa}, as in Theorem \ref{Teorema 1 introdução},there is a solution of the non-negative Riccati equation defined for all time and a solution of the non-positive Riccati equation. Furthermore, the modulus of these solutions is greater than or equal to $e^{-2\mu}\epsilon\tanh^2(e^{-\mu}\frac{1}{3}\ln(3))$.
      In particular, by Lemma \ref{lema instável não negativo e estável não positivo}, $\gamma_{\rho}$ has no focal points if $\rho\in[\overline{\rho},\rho_{1}]$ with
      $$\rho_{1}=\min \{\overline{\rho}+r,1\}$$
      where $r=\min\{r_1,r_2,r_3\}$.
      Since this holds for all $\gamma_{\rho}$, $(M,g_{\rho})$ is free of focal points and since $r>0$ is chosen uniformly, the lemma is proven for all $\overline{\rho}\in [0,1)$.
  \end{enumerate}
\end{proof}

\begin{figure}[H]
    \centering
    \includegraphics[scale=0.33]{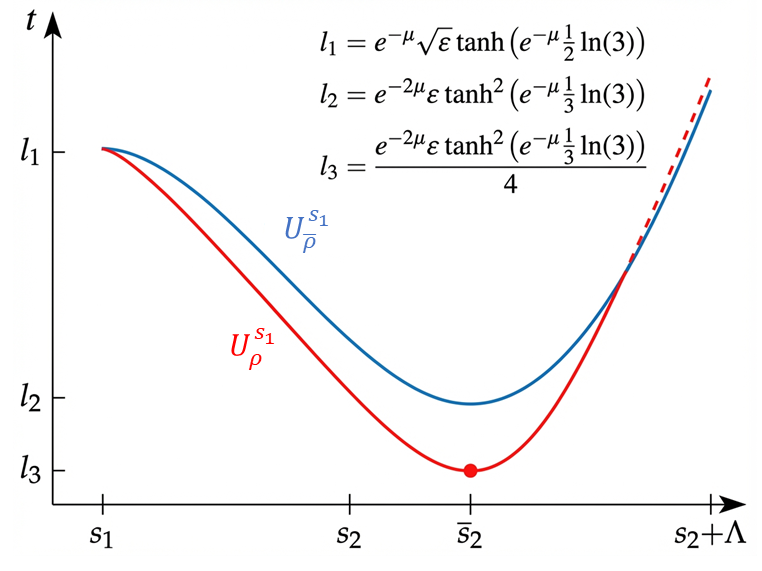}
    \caption{Illustration of the initial configuration of the geodesics and the regions involved in the sequence of arguments used in the proof of Proposition~\ref{proposição nova}.}
   
    \end{figure}

Note that $r>0$ is chosen uniformly, i.e., it does not depend on the choice of $\overline{\rho}\in[0,1]$. Moreover, in the proof we use both Proposition \ref{proposição nova}, which guarantees that the geodesic remains sufficiently long in the region of negative curvature, and Lemma \ref{lema não tem pontos focais na pertubação}, which controls the amount of time the geodesic is in the generalized bubble.

\medskip

Now, with the last lemma, we can finally prove the results about the non-existence of focal points and, subsequently, the result of the metric path generated by the deformation being Anosov metrics.

Let us now conclude with the proof of Proposition~\ref{prop deformação sem pontos locais}.\\

\begin{proof}
We prove Proposition~\ref{prop deformação sem pontos locais}.

Consider the deformation constructed in Proposition \ref{proposição da deformação}.
By hypotheses on the deformation, the curvature is smaller than $-\epsilon$ (and thus smaller than $-\zeta_{1}=-e^{-2\mu }\epsilon$) in $(\bigcup_{i=1}^{k}B_{\delta}(p_{i}))^{c}$.

As established in Lemma \ref{lema de quanto cresce a solução da equação de Riccati a partir da escolha de Lambda}, if $\gamma$ is a $g-$geodesic intersecting a generalized bubble in $(a,b)$, then by hypothesis, any Riccati solution $U$ with $U(b)>0$ satisfies 
$$U(b+\Lambda)>e^{-\mu}\sqrt{\epsilon}\,\tanh\!\Bigl(\tfrac12\,e^{-\mu}\ln(3)\Bigr).$$

Let $\gamma$ be a geodesic and $\{(a_{i},b_{i})\}_{i\in I}$ be the parameterized intervals where $\gamma$ intersects the generalized bubbles.
Using arguments from Lemma \ref{lema sai positivo da bolha e chega grande na próxima} and Theorem \ref{Teorema 1 introdução}, in a generalized bubble, if there exists $c\in(a_{2 },b_{2})$ such that $U(c)=e^{-\mu}\sqrt{\epsilon}\,\tanh\!\Bigl(\tfrac12\,e^{-\mu}\ln(3)\Bigr)$, then for $t \in [c, b_2]$,
\begin{align*}
    U(t) &= U(c) + \int_{c}^{t} U'(s) \, ds \\
    &= U(c) - \int_{c}^{t} (U^2(s) + K(s)) \, ds \\
    &\ge U(c) - \int_{c}^{t} (\max_{[c,b_{2}]} U(s))^2 \, ds - \int_{c}^{t} (\max_{[c,b_{2}]} K(s)) \, ds \\
    &\ge e^{-\mu}\sqrt{\epsilon}\,\tanh\!\Bigl(\tfrac12\,e^{-\mu}\ln(3)\Bigr) - (e^{-2\mu}\epsilon\tanh^2(e^{-\mu}\tfrac{1}{2}\ln(3)) + K^{+}) (t - c).
\end{align*}
Therefore, if
$$K^{+} <\frac{\sqrt{\epsilon}}{2\delta}\left[
  \tanh\!\left(e^{-\mu}\tfrac{1}{3}\ln3\right)
  -\sqrt{\epsilon}\,e^{-\mu}\,\tanh^{2}\!\left(e^{-\mu}\tfrac{1}{3}\ln3\right)\,(2\delta+1)
\right],$$
we have $U(t)>e^{-2\mu}\epsilon\tanh^2(e^{-\mu}\frac{1}{3}\ln(3))$ in the generalized bubble.

Given the hypothesis that
$$
K^{+}
<\frac{\sqrt{\epsilon}}{4e^{2\mu}\,\delta}\left[
  \tanh\!\left(e^{-\mu}\tfrac{1}{3}\ln3\right)
  -\sqrt{\epsilon}\,e^{-\mu}\,\tanh^{2}\!\left(e^{-\mu}\tfrac{1}{3}\ln3\right)\,(4e^{\mu}\delta+1)
\right],
$$
and utilizing Lemma \ref{lema de quanto cresce a solução da equação de Riccati a partir da escolha de Lambda} and Lemma \ref{lema sai da bolha e não volta mais depois de um tempo}, the Riccati solutions satisfy the conditions of Lemma \ref{lema não tem pontos focais na pertubação}. Thus, $(M,g_{\rho})$ has no focal points for $\rho\in[0,r]$.

Now, suppose $(M,g_\rho)$ has no focal points for $\rho\in[0,\overline{\rho}]$. This implies no focal points exist in $\bigcup_{i=1}^{k} B_{\delta}(p_{i})$.
Furthermore, if the distance between generalized bubbles in the $g$-metric is greater than $\Lambda$, this distance does not decrease under deformation (by Lemma \ref{lema distância entre as bolhas aumenta}) and thus remains greater than $\Lambda$ in the $g_{\overline{\rho}}$-metric. Consequently, if a Riccati solution exits a generalized bubble positively, it remains greater than $e^{-\mu}\sqrt{\epsilon}\,\tanh\! \Bigl(\tfrac12\,e^{-\mu}\ln(3)\Bigr)$ upon entering another generalized bubble, as the curvature remains less than $-\zeta_{1}$ in $(\bigcup_{i=1}^{k}B_{\delta}(p_{i}))^{c}$, by Lemma \ref{lema de quanto cresce a solução da equação de Riccati a partir da escolha de Lambda}. We must be careful about the case where the geodesic returns to the same generalized bubble, or rather the case where the lifted geodesic returns to the same lifting as the generalized bubble.

Specifically, by Lemma \ref{lema não tem pontos focais na pertubação}, it suffices that for any $\gamma_{\overline{\rho}}$ with $\gamma_{\overline{\rho}}(0)\in B_{\delta}(p_{i})$, if $\tilde{\gamma}_{\overline{\rho}}$ is its lifting such that $\tilde{\gamma}_{\overline{\rho}}(0)\in \tilde{B}^{j}_{\delta}(p_{i})$, and if $(a,b)$ is the connected component of $\tilde{\gamma}_{\overline{\rho}}\cap \tilde{B}^{\rho}_{2\delta e^{\mu}}(p^{j}_{i})$ containing $\tilde{\gamma}_{\overline{\rho}}(0)$, then if $U$ is a Riccati solution such that 
$$U(c)>e^{-\mu}\sqrt{\epsilon}\tanh(e^{-\mu}\frac{1}{3}\ln(3)) \,  \text{ for some } \, c\in(a,b),$$ 
then 
$$U(t)>e^{-2\mu}\epsilon\tanh^2(e^{-\mu}\frac{1}{3}\ln(3)) \, \text{ for all } \,  t\in(a,b).$$ 
This means we need to control the Riccati solution when the lifted geodesic exits the $e^{\mu}\delta$-neighborhood of the generalized bubble in the universal cover.

Furthermore, given that the length of the generalized bubbles in the deformed metric is smaller than $2e^{\overline{\rho}\mu}\delta$, $\Lambda=\frac{1}{\sqrt{\epsilon}}\artanh\left(1-\frac{\epsilon}{2}\right)$ as in Lemma \ref{lema de quanto cresce a solução da equação de Riccati a partir da escolha de Lambda}, and the region of positive curvature in $g_{\overline{\rho}}$ is contained in the generalized bubbles, if we denote $K^{+}_{\overline{\rho}}$ as the maximum curvature in $(M,g_{\overline{\rho}})$, an analogous calculation shows that if
$$
K^{+}_{\overline{\rho}}
<\frac{\sqrt{\epsilon}}{4e^{2\mu}\,\delta}\left[
  \tanh\!\left(e^{-\mu}\tfrac{1}{3}\ln3\right)
  -\sqrt{\epsilon}\,e^{-\mu}\,\tanh^{2}\!\left(e^{-\mu}\tfrac{1}{3}\ln3\right)\,(4e^{\mu}\delta+1)
\right],
$$
then $(M,g_{\rho})$ has no focal points for $\rho\in[0,\overline{\rho}+r]$, and the argument repeats inductively. This means $(M,g_{\rho})$ is free of focal points and satisfies the conditions of Lemma \ref{lema não tem pontos focais na pertubação}.

Thus, for $(M,g_{\rho})$ to have no focal points for $\rho\in [0,1]$, it is sufficient that for all $\rho\in[0,1]$, we have
$$
K^{+}_{\rho}
<\frac{\sqrt{\epsilon}}{4e^{2\mu}\,\delta}\left[
  \tanh\!\left(e^{-\mu}\tfrac{1}{3}\ln3\right)
  -\sqrt{\epsilon}\,e^{-\mu}\,\tanh^{2}\!\left(e^{-\mu}\tfrac{1}{3}\ln3\right)\,(4e^{\mu}\delta+1)
\right].
$$
Therefore, since $K_{\rho}^{+}<K^{+}$, if
$$
K^{+}
<\frac{\sqrt{\epsilon}}{4e^{2\mu}\,\delta}\left[
  \tanh\!\left(e^{-\mu}\tfrac{1}{3}\ln3\right)
  -\sqrt{\epsilon}\,e^{-\mu}\,\tanh^{2}\!\left(e^{-\mu}\tfrac{1}{3}\ln3\right)\,(4e^{\mu}\delta+1)
\right],
$$
$(M,g_{\rho})$ has no focal points for $\rho\in [0,1]$.
\end{proof}

Let us now conclude with the proof of Theorem~\ref{Teorema 3 introdução}.\\

\begin{proof}
We prove Theorem~\ref{Teorema 3 introdução}.

Let $w$ be as in Proposition~\ref{prop deformação sem pontos locais}.

By construction, the surface $(M,g_{\rho})$ has no focal points for every $\rho \in [0,1]$, as a consequence of the curvature bound
$$
K^{+}
<\frac{\sqrt{\epsilon}}{4e^{2\mu}\,\delta}\left[
  \tanh\left(e^{-\mu}\tfrac{1}{3}\ln 3\right)
  -\sqrt{\epsilon}\,e^{-\mu}\,\tanh^{2}\left(e^{-\mu}\tfrac{1}{3}\ln 3\right)\,(4e^{\mu}\delta+1)
\right].
$$

Moreover, by Theorem~\ref{Teorema 2 introdução}, the final metric $g_1$ has strictly negative curvature.

We now verify hyperbolicity. Although the argument follows that of Theorem~\ref{Teorema 1 introdução}, we present it again for completeness.

Since $(M,g_{\rho})$ has no conjugate points, stable and unstable Jacobi fields exist along every geodesic. According to \cite{eberlein1973geodesic}, it suffices to show that there are no nontrivial Jacobi fields which are simultaneously stable and unstable. Such a field could only exist on a corresponding geodesic that passes through a region of nonnegative curvature.

However, by our construction, along any $g_{\rho}$-geodesic, the unstable solution of the Riccati equation $U_{\rho}^{u}$ is positive, while the stable solution of the Riccati equation $U_{\rho}^{s}$ is negative. Therefore, the associated Jacobi fields cannot coincide, and hence no Jacobi field is simultaneously stable and unstable. This establishes hyperbolicity and concludes the proof.
\end{proof}

\bibliography{main} 

\end{document}